    \def\l@subsection{\@tocline{2}{0pt}{2.5pc}{5pc}{}}
    \newcommand{\G}{\mathcal{G}}
    \newcommand{\A}{\mathcal{A}}
    \newcommand{\ZZ}{\mathbb{Z}}
    \newcommand{\KK}{\mathbb{K}}
    \newcommand{\LL}{\mathbb{L}}
    \newcommand{\FF}{\mathbb{F}}
    \newcommand{\RR}{\mathbb{R}}
    \newcommand{\PP}{\mathbb{P}}
    \newcommand{\CC}{\mathbb{C}}
    \newcommand{\QQ}{\mathbb{Q}}
    \newcommand{\cS}{\mathcal{S}}
    \newcommand{\cQ}{\mathcal{Q}}
    \newcommand{\cJ}{\mathcal{J}}
    \newcommand{\mcH}{\mathcal{H}}
    \newcommand{\HOT}{(\star\star\star)}
    \newcommand{\bu}{\mathbf{u}}
    \newcommand{\bv}{\mathbf{v}}
    \newcommand{\bw}{\mathbf{w}}
    \newcommand{\br}{\mathbf{r}}
    \newcommand{\bm}{\mathbf{m}}
    \newcommand{\TP}{{\mathbb{T}^n}}
    \newcommand{\TT}{{\mathbb{T}}}
    \newcommand{\CSP}{{\mathfrak{C}}}
    \DeclareMathOperator{\rowspan}{rowspan}
    \DeclareMathOperator{\codim}{codim}
    \DeclareMathOperator{\spec}{Spec}
    \DeclareMathOperator{\trop}{Trop}
    \DeclareMathOperator{\conv}{conv}
    \DeclareMathOperator{\val}{\mathfrak{v}}
    \DeclareMathOperator{\Gr}{Gr}
    \DeclareMathOperator{\row}{Row}
    \DeclareMathOperator{\vspan}{span}
    \DeclareMathOperator{\Hom}{Hom}
    \newtheorem{thm}[equation]{Theorem}
    \newtheorem{prop}[equation]{Proposition}
    \newtheorem{lemma}[equation]{Lemma}
    \newtheorem{cor}[equation]{Corollary}
    \theoremstyle{definition}
    \newtheorem{defn}[equation]{Definition}
    \newtheorem{alg}[equation]{Algorithm}
    \newtheorem{rem}[equation]{Remark}
    \newtheorem{ass}[equation]{Assumption}
    \newtheorem{warning}[equation]{Warning}
    \newenvironment{ex}
      {\pushQED{\qed}
      \oldex}
      {\popQED\endoldex}
    \newcommand{\hide}[1]{}
    \numberwithin{equation}{subsection}
    \newcolumntype{g}{>{\columncolor{yellow}}c}
    \title{Tropical tangents for complete intersection curves}
    \author{Nathan Ilten}
    \address{Department of Mathematics, Simon Fraser University,
    8888 University Drive, Burnaby BC V5A1S6, Canada}
    \email{\href{mailto:nilten@sfu.ca}{nilten@sfu.ca}}
    \author{Yoav Len}
    \address{Mathematical Institute, University of St Andrews, St Andrews KY16 9SS, UK}
    \email{\href{mailto:yoav.len@st-andrews.ac.uk}{yoav.len@st-andrews.ac.uk}}
\begin{document}
    
    \begin{abstract}
    We consider the tropicalization of tangent lines to a complete intersection curve $X$ in $\PP^n$. Under mild hypotheses, we describe a procedure for computing the tropicalization
    of the image of the Gauss map of $X$ in terms of the tropicalizations of the hypersurfaces cutting out $X$. We apply this to obtain descriptions 
    of the tropicalization of the dual variety $X^*$ and tangential variety $\tau(X)$ of $X$. In particular, we are able to compute the degrees of $X^*$ and $\tau(X)$ and the Newton polytope of $\tau(X)$ without using any elimination theory.
    
    \end{abstract}
    \maketitle
    
    \setcounter{tocdepth}{2}
    \tableofcontents
    
    \section{Introduction}
    \subsection{Background and related work}
    The notion of tangency plays an important role for many classical constructions in algebraic geometry. For example, the \emph{tangential variety} to  a projective variety $X\subset \PP^n$ is
    \[
    	\tau(X)=\overline{\{Q\in \PP^n\ |\ Q\in T_PX \ \textrm{for some smooth point}\ P\in X\}}.
    \]
    Similarly, the \emph{dual variety} to $X$ is
    \[
    	X^*=\overline{\{H\in (\PP^n)^*\ |\ T_PX\subset H\ \textrm{for some smooth point}\ P\in X\}}.
    \]
    Here $(\PP^n)^*$ is the dual projective space, whose points are hyperplanes in $\PP^n$.
    See \cite[\S15]{harris}.
    It is frequently of interest to describe basic invariants of these varieties such as dimension and degree. For example, if $X$ is an integral plane curve of degree $d$ with $\alpha$ nodes and $\beta$ cusps and no other singularities, then the famous Pl\"ucker formula tells us that $X^*$ is a plane curve of degree \[
    \deg X^*=d(d-1)-2\alpha-3\beta.
    \]
    This formula may be applied, for instance, to count the number of bitangent lines of a plane curve of any degree~\cite[Chapter 2.4]{GriffithHarris}. 
    
    In this paper, we will use \emph{tropical geometry} to study $\tau(X)$ and $X^*$ when $X\subset \PP^n$ is a complete intersection curve. Let $\KK$ be an algebraically closed field of characteristic zero with non-trivial non-Archimedean valuation $\val:\KK\to \RR$ that takes trivial values on the integers. Then $\val$ gives rise to a \emph{tropicalization} map 
    \begin{align*}
    \trop:(\KK^*)^n&\to \RR^n\\
    (x_1,\ldots,x_n)&\mapsto (\val(x_1),\ldots,\val(x_n)).
    \end{align*}
    In most applications,  $\KK$ will be the field of Puiseux series, in which the valuation of the parameter $t$ is chosen to be $1$ (see e.g.~\cite[Example 2.1.3]{tropical}). 
    Given a projective variety $Y\subset \PP^n$, its tropicalization $\trop (Y)$ is the closure of the image of $Y\cap (\KK^*)^n$ under the tropicalization map.
    The set $\trop(Y)$ can be endowed with the structure of a polyhedral complex, and many features of $Y$, including its dimension and degree, may be recovered from $\trop(Y)$.
    
    Our goal in this paper is to describe $\trop(\tau(X))$ and $\trop(X^*)$ when $X\subset \PP^n$ is a complete intersection curve satisfying some mild hypotheses. We emphasize that while tropical geometry is interesting in its own right, our motivation comes from algebraic geometry. In particular, our techniques provide a new method for computing the degrees of $\tau(X)$ and $X^*$, and the Newton polytope of $X^*$.
    
    Tropical geometry has been previously used to study projective dual varieties in a number of situations. Z.~Izhakian showed that for a hypersurface $X\subset \PP^n$ whose defining polynomial has sufficiently generic coefficients, $\trop(X^*)$ may be determined directly from $\trop(X)$ (albeit in a rather non-explicit fashion) \cite{izhakian}. A.~Dickenstein, E.~Feichtner, and B.~Sturmfels gave an explicit description of $\trop(X^*)$ when $X$ is a toric variety \cite{sturmfels}. Finally, the present authors recently gave explicit descriptions of $\trop(X^*)$ when $X$ is a tropically smooth plane curve or surface in three-space \cite{ilten-len}. Tropical geometry has also been used to approach other problems involving tangencies, such as describing the bitangent  lines of complex and real plane curves \cite{LenMarkwig_Bitangents, CuetoMarkwig_Bitangents} and inflection points of real curves \cite{BrugalleMedrano_Inflection}.
    
    The approach we describe below involves describing the tropicalization of the image of the Gauss map. This is related to \emph{tropical elimination theory}, as studied in \cite{elimination}. However, the more explicit results of that paper do not apply in our setting, see Remark \ref{rem:elim} for a discussion. 
    
    \subsection{Our approach and results}
    
    Both the tangential and dual varieties are intimately related to the \emph{Gauss Map}. Given an $m$-dimensional projective variety $X\subset \PP^n$, its Gauss map is the rational map
    \[\G_X:X\dashrightarrow \Gr(m+1,n+1)\]
    sending a smooth point $P\in X$ to its tangent plane $T_PX$. Denote the closure of the image of this map by $\G(X)$. 
    Both $\tau(X)$ and $X^*$ arise as projections of projectivizations of naturally defined vector bundles on $\G(X)$, see \S\ref{sec:bundlep}.

    Our first step is thus to understand $\trop(\G(X))$ for a complete intersection curve $X\subset \PP^n$. Here, we are considering $\G(X)$ embedded via the Pl\"ucker embedding. More specifically, for any point $\alpha\in\trop(X)$, we wish to describe all $\beta\in\trop (\Gr(2,n+1))$ such that there exists a smooth point $P\in X$ with 
    \[
    \trop(P)=\alpha\text{\, and \,}\trop(T_PX)=\beta.
    \]
    By perhaps a slight abuse of terminology, we call such $\beta$ a \emph{tropical tangent} to $\alpha$. 
    
    To give a small taste of our results on tropical tangents, let us fix notation. We set $\TP=\RR^{n+1}/(1,\ldots,1)\cong \RR^n$ and let $e_0,\ldots,e_n$ be the images of the standard basis of $\RR^{n+1}$ in $\TP$. For any subset $J\subset \{0,\ldots, n\}$, we let $\langle J \rangle$ be the span in $\TP$ of those $e_i$ such that $i\in J$. Likewise, for any subset $\Lambda \subset \TP$, let $\langle \Lambda \rangle$ denote the linear span of all differences of elements of $\Lambda$. We use the term \emph{affine tangent space} to describe the standard tangent space, as opposed to the tropical one. That is, given a point $\alpha\in \trop(X)$, its affine tangent space is the affine span of $\trop(X)$ in a small neighborhood of $\alpha$.
    We refer the reader to Sections~\S\ref{sec:basics} and \ref{sec:ci} for the definition of a \emph{tropical complete intersection of tropically smooth hypersurfaces}.
    
    \begin{thm}[See Theorem \ref{thm:genericrestate}]\label{thm:generic}
    Let $X\subset \PP^n$ be a curve such that $\trop(X)\subset \TP$ is a tropical complete intersection of tropically smooth hypersurfaces. Fix a point $\alpha\in\trop(X)$, and let $\Lambda\subset \TP$ denote the affine tangent space to $\trop(X)$ at $\alpha$. Assume that $\langle \Lambda \rangle$ is not contained in any  $\langle J \rangle $ with $|J|=n-1$.
    \begin{enumerate}
    	\item If $\alpha$ is in the relative interior of an edge of $\trop(X)$, then there is a unique tropical tangent $\beta$ to $\alpha$, and its tropical Pl\"ucker coordinates are
    \[
    	\beta_I=\sum_{j\notin I} -\alpha_j\qquad I\subset \{0,\ldots,n\},\ |I|=2.
    \]
    
    \item Suppose that $\alpha$ is a vertex of $\trop(X)$, and $\langle \Lambda\rangle \cap (\langle e_i+e_j\rangle + \langle J'\rangle)=\{0\}$ for all $i,j$ and $J'$ with $|J'|=n-3$. Then $\beta$ is a tropical tangent to $\alpha$ if and only if 
    \[
    	\beta_I=\lambda_I+\sum_{j\notin I} -\alpha_j\qquad I\subset \{0,\ldots,n\},\ |I|=2
    \]
    with all $\lambda_I\geq 0$, at most one $\lambda_I\neq 0$, and $\lambda_I=0$ if $\langle E \rangle \subset \langle \{0,\ldots,n\}\setminus I \rangle$ for an edge $E$ of $\trop(X)$ at $\alpha$.
    \end{enumerate}
    \end{thm}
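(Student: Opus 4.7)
The plan is to lift the tropical picture to the field $\KK$. Fix a smooth point $P \in X$ with $\trop(P) = \alpha$, and let $f_1, \ldots, f_{n-1}$ be homogeneous forms cutting out $X$. The tangent line $T_P X \subset \PP^n$ is determined by the Jacobian matrix $J(P) = (\partial_j f_k(P))$ together with $P$ itself, and its Plücker coordinate $p_I$ (for $|I|=2$) is, up to sign, the determinant of the $(n-1)\times(n-1)$ submatrix of $J(P)$ obtained by deleting the columns indexed by $I$. Tropicalizing, $\beta_I = \val(p_I)$ is the minimum of the valuations of the terms in the Leibniz expansion of this minor, provided those terms do not cancel. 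The theorem is then an assertion about which terms dominate and when cancellation can be ruled out.

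First I would establish a uniform baseline. Because $\alpha$ lies on each tropical hypersurface $\trop V(f_k)$, and each is tropically smooth, the valuation of $\partial_j f_k(P)$ can be read off the dual cell to $\alpha$ in the Newton polytope of $f_k$. Summing contributions along a permutation in the Leibniz expansion, every term in the relevant minor has valuation at least $-\sum_{j\notin I}\alpha_j$ after projective renormalization; the hypothesis that $\langle\Lambda\rangle\not\subset\langle J\rangle$ for any $|J|=n-1$ is precisely what allows this renormalization to be performed coherently across all $I$, i.e.\ ensures that $T_P X$ is not forced into a coordinate hyperplane (so that $\beta$ makes sense as a point of $\trop\Gr(2,n+1)$).

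Next I would analyze when the bound is tight. For part~(1), $\alpha$ is interior to a unique edge of $\trop(X)$, so tropical smoothness singles out a single dominant monomial in each Jacobian minor with no cancellation, giving $\beta_I=-\sum_{j\notin I}\alpha_j$ on the nose. For part~(2), $\alpha$ is a vertex where several edges of $\trop(X)$ meet and there is residual freedom in choosing the lift $P$; this freedom allows the minor to pick up a nonnegative correction $\lambda_I$. The three constraints in the statement arise as follows: $\lambda_I\geq 0$ from nonnegativity of valuations of residual contributions, the vanishing of $\lambda_I$ whenever $\langle E\rangle\subset\langle\{0,\ldots,n\}\setminus I\rangle$ from the requirement that $\beta$ specialize to the part~(1) formula as $\alpha$ moves into the interior of the edge $E$, and the ``at most one nonzero $\lambda_I$'' condition from the transversality hypothesis $\langle\Lambda\rangle\cap(\langle e_i+e_j\rangle+\langle J'\rangle)=\{0\}$, which prevents two independent corrections from coexisting in a tangent plane.

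The main obstacle will be the realizability direction of part~(2): showing that \emph{every} admissible $(\lambda_I)$ is actually attained by some lift $P$, not merely bounded above. I would expect to handle this by analyzing the initial degeneration of $X$ at $\alpha$ over the residue field, using a dimension count (or an explicit parametrization of the residual curve near a vertex of a tropical complete intersection of smooth hypersurfaces) to verify that each allowed tangent stratum is hit. The necessity (upper-bound) direction should by contrast reduce to a bookkeeping exercise on which Leibniz terms can share the same minimum valuation, with the transversality hypotheses serving exactly to keep this bookkeeping under combinatorial control.
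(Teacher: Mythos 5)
Your proposal correctly identifies the overall strategy---analyze the valuations of the Jacobian minors (i.e.\ the Pl\"ucker coordinates of $T_PX$) and determine when cancellation can occur---and the reading of the hypothesis on $\langle\Lambda\rangle$ as ensuring $T_PX$ is not forced into a coordinate hyperplane is exactly right. But the two hardest pieces of part~(2) are left as expectations rather than arguments, and this is where the proposal has genuine gaps. The ``at most one $\lambda_I\neq 0$'' claim is attributed to the transversality hypothesis $\langle\Lambda\rangle\cap(\langle e_i+e_j\rangle+\langle J'\rangle)=\{0\}$ without any mechanism. In the paper this requires first showing (Proposition~\ref{prop:delta}) that simultaneous cancellation for two minors $z_I$ and $z_J$ happens precisely when $\Delta_I(\bv)\Delta_J(\bv')-\Delta_J(\bv)\Delta_I(\bv')=0$, and then translating that determinantal vanishing into the stated geometric condition via the Hodge-star computation of Lemma~\ref{lemma:sc} and Proposition~\ref{prop:simultv}. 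That is a nontrivial combinatorial identity, not a ``bookkeeping exercise.'' Similarly, arguing that $\lambda_I=0$ whenever $\langle E\rangle\subset\langle\overline I\rangle$ by requiring ``$\beta$ to specialize to the part~(1) formula'' is a heuristic, not a proof; the actual argument is that in that case one of $\Delta_I(\bv)$, $\Delta_I(\bv')$, or $\Delta_I(\bv)-\Delta_I(\bv')$ vanishes (Lemma~\ref{lemma:vert}), which isolates the minimal-valuation term and forces $\val(z_I)=\nu_I=0$ via Proposition~\ref{prop:nocancel}.

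For the realizability direction, you propose a dimension count or explicit parametrization over the residue field. This is a genuinely different route from the paper, which applies Osserman--Payne lifting (Theorem~\ref{thm:lifting}) directly: in Proposition~\ref{prop:cancel} one adjoins to $f_1,\ldots,f_{n-1}$ a modified equation $f_{i_0}'$ that encodes the prescribed value of $z_J$, verifies that the corresponding tropical hypersurfaces meet properly at $\alpha$, and concludes a lift exists. Your residue-field approach would face the additional step of lifting from a solution of the initial degeneration back to a $\KK$-point \emph{with the prescribed higher-order valuation data}; that is exactly the delicate part that Osserman--Payne is engineered to handle, and it is not clear a plain dimension count over $\FF$ suffices. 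Finally, note that the theorem as stated makes no ``non-colliding valuations'' hypothesis; the paper addresses this via Remark~\ref{rem:cancel} and the observation that at a vertex the unique cancellative pair of minimal valuation is necessarily $(\bv,\bv')$. Your outline does not acknowledge this subtlety, which would matter if you attempted to invoke the general propositions directly.
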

    \noindent 
    See Corollary \ref{cor:geom} for a restatement of this theorem in which the tropical tangents to $\alpha$ are described in geometric terms, as opposed to via Pl\"ucker coordinates.
    
    The above theorem only describes tropical tangents to $\alpha$ when the affine tangent space of $\trop(X)$ at $\alpha$ is not contained in certain special hyperplanes. However, under relatively mild hypotheses on $X$ we are actually able to compute all tropical tangents regardless of the positioning of the affine tangent space.  See Theorem \ref{thm:alltangents} for a precise statement.
    While our general result is a procedure for computing all tropical tangents, in special cases (such as that of Theorem \ref{thm:generic}) the set of tropical tangents may understood more explicitly.

    Armed with an understanding of the tropicalization of $\G(X)$ for $X$ a complete intersection curve, we then proceed to study $\trop(\tau(X))$ and $\trop(X^*)$. Once have we have computed $\trop(\G(X))$, we readily obtain explicit descriptions of the tropicalizations of the total spaces of the bundles whose projections are $\tau(X)$ and $X^*$. While describing the image of a projection in algebraic geometry requires elimination theory, in tropical geometry it is more straightforward. We thus obtain explicit descriptions of $\trop(\tau(X))$ and $\trop(X^*)$. Under the mild hypotheses we will be making, our descriptions of $\trop(\tau(X))$ and $\trop(X^*)$ depend only on tropical data, and not on the variety $X$ itself. See Theorem \ref{thm:dual}. We also make some simpler statements in special cases, see Propositions \ref{prop:bergman}, \ref{prop:vertcontrib}, and \ref{prop:vertcontribtau}.
    
    The top-dimensional cells in any tropical variety $\trop(Y)$ can be endowed with \emph{multiplicities}. These play a role in intersection theory, and allow one to recover the degree of the original variety $Y$. We will show how to explicitly compute the multiplicities for $\trop(\G(X))$, $\trop(\tau(X))$, and $\trop(X^*)$. See Theorems \ref{thm:gmult}, \ref{thm:mult}, and \ref{thm:multtau}.
    
    \subsection{Organization}
    The rest of the paper is organized as follows. In \S\ref{sec:prelim} we fix some notation and introduce preliminaries. 
    In particular, we prove Theorem \ref{thm:gausszero} which gives a tropical characterization of when the image of the Gauss map of \emph{any} projective variety is contained in a hyperplane where a Pl\"ucker coordinate vanishes.
    In \S\ref{sec:gauss} we cover the tropical Gauss map; this is the technical heart of the paper. 
    After fixing our notation and assumption, we use the lifting result of Osserman and Payne \cite{lifting} to analyze the possible valuations that Pl\"ucker coordinates of tangent lines can have.
    
    In \S\ref{sec:comb} we give combinatorial interpretations of some aspects of our description of $\trop(\G(X))$. This gives insight into the structure of tropical tangents, and allows us to prove Theorem \ref{thm:generic}. We turn our attention to $\trop(\tau(X))$ and $\trop(X^*)$ in \S\ref{sec:bundle}. We show how to obtain them from $\trop(\G(X))$, and proceed to analyze some special cases. We conclude in \S\ref{sec:mult} with a discussion of multiplicities. After first computing multiplicites for the tropicalization of the graph of the Gauss map, we then show how to recover them for $\trop(\G(X))$, $\trop(\tau(X))$, and $\trop(X^*)$.
    
    Throughout the paper, we consider two running examples: a cubic plane curve
(see Examples \ref{ex:runningP}, \ref{ex:runningP2}, \ref{ex:nuP}, \ref{ex:critP},  \ref{ex:runningnocancelP}, \ref{ex:simultP}, \ref{ex:combP1}, \ref{ex:multP}, \ref{ex:multP2}) and a degree $9$ curve in $\PP^3$ (see Examples \ref{ex:running}, \ref{ex:running2}, \ref{ex:nu}, \ref{ex:crit},  \ref{ex:runningnocancel}, \ref{ex:simult}, \ref{ex:comb1}, \ref{ex:comb2}, \ref{ex:rundt}, \ref{ex:runningtrop}, \ref{ex:mult}, \ref{ex:mult2}).
    Using the techniques described in this paper, we are able to recover the Newton polytope of the projective duals of these curves.  
      
      We conclude the introduction with an overview of the plane curve example (Example \ref{ex:runningP}).
       While duals of tropically smooth plane curve were already computed in \cite{ilten-len}, the current paper allows us to deal with the case of edge multiplicity as well, leading to considerably different behaviour than in the smooth case: those multiplicities contribute directly to the multiplicity of edges of the dual, and there are additional multiplicities resulting  
    from projection from the co-normal variety. 
    
    \begin{ex}[A curve in $\PP^2$]\label{ex:planeCurve}
	    Let $X$ be the curve \[V(x_0^2x_1 + x_0^2x_2 + x_1^2x_2)\subseteq \PP^2.\]
    Its tropicalization is pictured in Figure \ref{fig:planarExample} and consists of a vertex $V$, and edges $E_+,E_-,E'$ (see Example \ref{ex:runningP}).
    The edge $E'$ has multiplicity $2$ (see Example \ref{ex:multP}).
        Since the dimension of the ambient space is $n=2$, the image of the Gauss map  ${\G}(X)$ coincides with the dual curve $X^*$ after identifying the $J$-th Pl\"ucker coordinate of the former with the $\overline J$-th coordinate of the latter, where $|J| = 2$ and $\overline J = \{0,1,2\}\setminus J$. 
        
The edges $E_+,E_-,E'$ contribute respectively the edges $-E_+,-E_-,-E'$ to $\trop(X^*)$ (see Example \ref{ex:runningnocancelP}).
Likewise, the vertex $V$ contributes an edge $\RR_{\geq 0}\cdot e_1$ (see Example \ref{ex:simultP}).

The multiplicities of these edges are labeled in Figure \ref{fig:planarExample}; see Example \ref{ex:multP2} for the computation.

  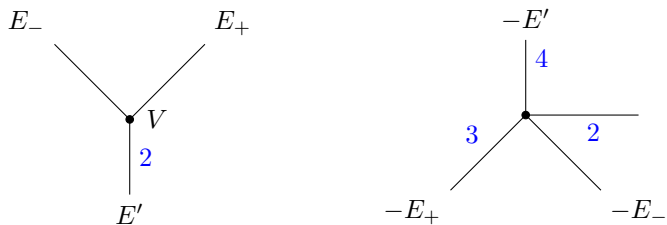
\begin{figure}
    	\begin{tikzpicture}[scale=.5]
	
	\draw (0,0) -- (0,-2);
	\draw (0,0) -- (-2,2);
	\draw (0,0) -- (2,2);
	\draw[fill] (0,0) circle [radius=0.1];
	\node[right] at (0,-1) {\color{blue}$2$};
	\node[below] at (0,-2) {$E'$};
	\node[above left] at (-2,2) {$E_-$};
	\node[above right] at (2,2) {$E_+$};
	\node[right] at (0.2,0) {$V$};
\end{tikzpicture}
\qquad\qquad
\begin{tikzpicture}[scale=.5]
	\draw (0,0) -- (0,2);
	\draw (0,0) -- (2,-2);
	\draw (0,0) -- (-2,-2);
		\draw (0,0) -- (3,0);
	\draw[fill] (0,0) circle [radius=0.1];
	\node[right] at (0,1.5) {\color{blue}$4$};
	\node[below] at (1.8,0) {\color{blue}$2$};
	\node[above left] at (-1,-1) {\color{blue}$3$};

	\node[above] at (0,2) {$-E'$};
	\node[below right] at (2,-2) {$-E_-$};
	\node[below left] at (-2,-2) {$-E_+$};
       	\end{tikzpicture}
    	\caption{A plane tropical curve and its tropical dual (with multiplicities)}\label{fig:planarExample}
    \end{figure}

 In passing, we remark that by choosing a family $\trop(X_t)$ of tropically smooth tropical curves converging to $\trop(X)$ in the example above, the tropical duals of $X_t$ (which could have been computed via the results of \cite{ilten-len}) converge to the tropical dual of $X$ just computed.     
 The reason this holds true in this special case is because the curves $X_t$ and $X$ all necessarily have the same singularities: a single ordinary double point.
 \end{ex}

    \subsection*{Acknowledgements} The first author was partially supported by NSERC. We thank Jake Levinson for useful conversations and Hannah Markwig for helpful comments on an earlier draft. We also thank the anonymous referee for their very insightful suggestions and remarks.

    \section{Preliminaries}\label{sec:prelim}
    \subsection{Tropical basics}\label{sec:basics}
    We refer the reader to \cite{tropical} for an introduction to tropical geometry.
    As mentioned in the introduction, we will always be working over an algebraically closed field $\KK$ of characteristic zero along with a non-trivial non-Archimedean valuation $\val:\KK\to \RR\cup\{\infty\}$. Here we use that convention that $\val(0)=\infty$. We will furthermore assume that $\val$ takes trivial values on the integers.
    
    Using $\val$, we have a tropicalization map
    \begin{align*}
    \trop:(\KK^*)^n&\to \RR^n\\
    (x_1,\ldots,x_n)&\mapsto (\val(x_1),\ldots,\val(x_n)).
    \end{align*}
    Given any subvariety $Y\subset (\KK^*)^n$, the closure of its image  $\trop(Y)$ under the tropicalization map is the support of a finite polyhedral complex~\cite[Corollary 3.24]{tropical}.
    
    Let $M$ be the character lattice of $(\KK^*)^n$, and for any $u\in M$, denote by $x^u$ the corresponding regular function on the torus.
    Consider any monomial\footnote{Throughout this paper, we use \emph{monomial} as a synonym for a term of a polynomial, that is, our monomials are allowed to have coefficients.}  $f=c x^u\in \KK[M]$ and any point $\alpha\in\RR^n$. For $P\in (\KK^*)^n$ tropicalizing to $\alpha$, the valuation of $f(P)$ is
    \[
    \val(f(P))=\alpha \cdot u+\val(c).
    \]
    This is independent of $P$. We will refer to this as the valuation of $f$ at $\alpha$, or simply as  $\val(f)$, when $\alpha$ is clear from the context.
    
    In the special case where $Y$ is a hypersurface in $(\KK^*)^n$, its tropicalization is especially simple to describe. Let $\A$ be a finite subset of $M$, and consider a regular function of the form
    \[f=\sum_{u\in \A} c_u x^u,\] where $c_u\in \KK^*$.
    Kapranov's theorem  (see e.g.~\cite[Theorem 3.1.3]{tropical})
    states that 
    \[
    	\trop(V(f))=\{\alpha\in\ \RR^n\ |\ \alpha\cdot u +\val(c_u)\ \textrm{obtains its minimum for at least two}\ u\in\A\}.
    \]
    This set may be given the structure of a polyhedral complex by saying that $\alpha,\alpha'$ are in the relative interior of the same cell if $\alpha\cdot u +\val(c_u)$ and
    $\alpha'\cdot u +\val(c_u)$ obtain their minima for the same $u\in \A$.
    Such a hypersurface (or its tropicalization) is said to be \emph{tropically smooth} if the regular subdivision of $\conv{\A}$ induced by the lower convex hull of \[\{(u,\val(c_u))\}_{u\in\A}\subset \RR^n\times \RR\]
    is a unimodular triangulation.

    \subsection{Complete intersections}\label{sec:ci}
    Given $k$ hypersurfaces $Y_1,\ldots,Y_k\subset (\KK^*)^n$, we say that they (or $\trop(Y_1),\ldots,\trop(Y_k)$) form a \emph{tropical complete intersection}
     if for any $k$-tuple of cells $(E_1,\ldots,E_k)$ with $E_i\subset \trop(Y_i)$, the intersection $E_1\cap\cdots \cap E_k$ is either empty, or has dimension equal to 
     \[
    (1-k)n+\sum_{i=1}^k \dim E_i.
     \]
     Equivalently, 
     \[
    \codim (E_1\cap\cdots \cap E_k )=   \sum_{i=1}^k \codim E_i.
     \]
     
          It follows from Theorem \ref{thm:lifting} below that if this condition holds, then
	  \[\trop(Y_1\cap\ldots\cap Y_k)=\trop(Y_1)\cap\ldots\cap\trop(Y_k),\] and moreover   $Y_1,\ldots,Y_k$ are a complete intersection. However, the converse is not true in general. Nonetheless, we will show in Section \ref{sec:genericity} that  the intersection of sufficiently general tropical hypersurfaces is in fact a complete intersection.
    
    \subsection{Tropicalizing projective varieties}\label{sec:proj}
    We will primarily be interested in tropicalizing projective varieties.
    Let $T\subset \PP^n$ be the open torus given by the non-vanishing of all homogeneous coordinates.
    For $Y\subset \PP^n$, we denote by $\trop(Y)$ the tropicalization of $Y\cap T$.  The character lattice $M$ of $T$ is most naturally thought of as 
    \begin{equation}\label{eqn:M}
    	M=\left\{ w\in \ZZ^{n+1}\ | \sum w_i=0\right\}
    \end{equation}
    with the regular function $x^w$ on $T$ given by $\prod_{i=0}^n x_i^{w_i}$. Dually, the codomain of the tropicaliziation map is $\TP:=\Hom(M,\ZZ)\otimes \RR=\RR^{n+1}/(1,\ldots,1)\cong \RR^n$. 
    
    For $i=0,\ldots, n$, we will denote by $e_i$  the image in $\TP$ of the $i$th standard basis vector in $\RR^{n+1}$. In particular, $\sum_{i=0}^n e_i=0$. 
    We will make frequent use of the following notation: given some subset $J\subseteq \{0,\ldots,n\}$, we set 
    \begin{equation*}
    \langle J \rangle = \vspan \{e_i\ |\ i\in J\}\subseteq\TP,
    \end{equation*}
    the vector space spanned by the vectors $e_i$. 
    In examples throughout this paper, we will use the basis $e_1,\ldots,e_n$ to express elements of $\TP$ as $n$-tuples. In other words, we will often write a point $(\alpha_0,\ldots,\alpha_n)\in\TP$ as $(\alpha_1-\alpha_0,\ldots,\alpha_n-\alpha_0)$.

Throughout the paper, we will consider two examples. The first is a plane curve:    
    \begin{ex}[A curve in $\PP^2$]\label{ex:runningP}
    We will follow this example throughout the paper.     We consider
    \begin{align*}
    X&=(x_0^2x_1 + x_0^2x_2 + x_1^2x_2)\subset \PP^2,
    \end{align*}
    a plane cubic curve.

     Restricting to the torus $T$ and working with regular functions on it, we have
    \begin{align*}
	    X\cap T&=V(1+x_1^{-1}x_2+x_0^{-2}x_1x_2)
\end{align*}
and $\trop(X)$ consists of a vertex $V=(0,0)$ and edges
\begin{align*}
    	E_+=\RR_{\geq 0}\cdot (1,1)\qquad E_-=\RR_{\geq 0} (-1,1)\\
    	E'=\RR_{\geq 0}\cdot (0,-1).
    \end{align*}
    We note here that the edge $E'$ has multiplicity two (see \S \ref{sec:mbasics} and Example \ref{ex:multP}).
    The tropical curve $\trop(X)$ is pictured in Figure \ref{fig:planarExample}.
    \end{ex}

    Our second example is a curve in $\PP^3$:
	    \begin{ex}[A curve in $\PP^3$]\label{ex:running}
    	We will follow this example throughout the paper. We work over the field $\KK$ of Puiseux series in $t$ with coefficients in $\CC$ (see e.g.~\cite[Example 2.1.3]{tropical}). The valuation $\val$ picks out the smallest exponent of $t$.
    We consider
    \begin{align*}
    X_1&=V(x_3^4+x_0^2x_2x_3+x_0x_2^3)\subset \PP^3\\
    X_2&=V(x_1x_3^2+x_1^2x_2+t^3x_0x_3^2)\subset \PP^3\\
    \end{align*}
    The intersection of $X_1$ and $X_2$ consists of two components: a degree $9$ curve $X$, and a non-reduced version of the line $V(x_2,x_3)$. One can compute (e.g. with \texttt{Macaulay2} \cite{M2}) that $X$ has arithmetic genus $8$ and two singular points. Although $X$ is not a complete intersection, its intersection with the torus $T$ is a complete intersection.
    
    Restricting to the torus $T$ and working with regular functions on it, we have
    \begin{align*}
    	X_1\cap T&=V(1+x_0^2x_2x_3^{-3}+x_0x_2^3x_3^{-4})\\
    	X_2\cap T&=V(1+x_1x_2x_3^{-2}+t^3x_0x_1^{-1}).
    \end{align*}
    The hypersurfaces $X_1$ and $X_2$ form a tropical complete intersection, and $\trop(X)$ consists of the vertices
    \[
    	V_1=(0,0,0)\qquad V_2=(3,1,2)\qquad V_3= (3,0,0)
    \]
    and the edges
    \begin{align*}
    &	E_1=\conv \{(0,0,0),(3,1,2)\}\qquad E_2=\conv \{(3,1,2),(3,0,0)\}\\
    &E_3=(3,1,2)+\RR_{\geq 0}\cdot (3,2,4)\qquad
    	E_4=\RR_{\geq 0}\cdot (-1,3,1)\qquad E_5=\RR_{\geq 0} (-2,-4,-3)\\
    &	E_6=(3,0,0)+\RR_{\geq 0}\cdot (0,3,1)\qquad E_{7}=(3,0,0)+\RR_{\geq 0}\cdot (0,-4,-3).
    \end{align*}

    See Figure \ref{fig:surfaces} for depictions of $\trop(X_1)$ and $\trop(X_2)$ with $\trop(X)$ sitting inside.
    See Figure \ref{fig:proj} for the projection of $\trop(X)$ to $\RR^2$ under the map $(\alpha_1,\alpha_2,\alpha_3)\mapsto(\alpha_1-\alpha_3,\alpha_2)$.
    \end{ex}
    
    \begin{figure}
	    \begin{center}
	    \begin{tabular}{c c}
	    \includegraphics[scale=.4]{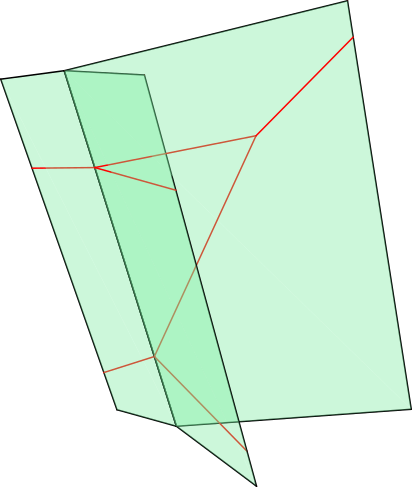} &
	    \includegraphics[scale=.4]{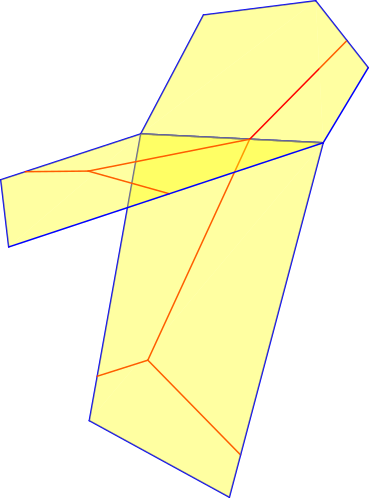}\\
	    $\trop(X_1)$ &$\trop(X_2)$
    \end{tabular}
    \end{center}
\caption{The tropical surfaces $\trop(X_1)$ and $\trop(X_2)$ from Example \ref{ex:running}}.\label{fig:surfaces}
    \end{figure}

    \begin{figure}
    	\begin{tikzpicture}
    \draw (0,0) -- (1,1) -- (3,0);
    \draw (-2,3) -- (0,0) -- (.5,-2);
    \draw (2,3) -- (3,0) -- (4.5,-2);
    \draw (1,1) -- (0,3);
    \draw[fill] (1,1) circle [radius=0.05];
    \draw[fill] (0,0) circle [radius=0.05];
    \draw[fill] (3,0) circle [radius=0.05];
    	\node [left ] at (0,0) {$V_1$};
    	\node [below ] at (1,.9) {$V_2$};
    	\node [right ] at (3,0) {$V_3$};
    	\node [above left] at (.5,.3) {$E_1$};
    	\node [below left] at (2.3,.5) {$E_2$};
    	\node  at (.8,2) {$E_3$};
    	\node  at (-1.7,2) {$E_4$};
    	\node  at (.5,-1) {$E_5$};
    	\node  at (2.6,2) {$E_6$};
    	\node  at (3.5,-1) {$E_7$};
    	\end{tikzpicture}
    	\caption{A projection of $\trop(X)$ from Example \ref{ex:running}}.\label{fig:proj}
    \end{figure}
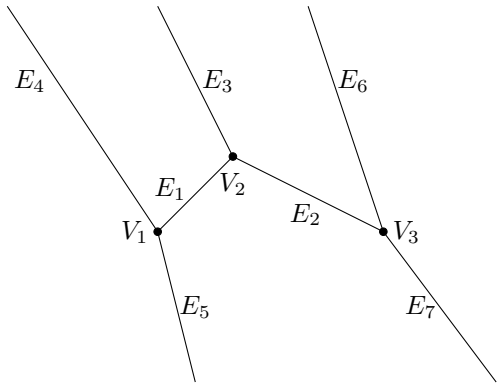

    \subsection{Lowest order parts}
    Let $\Gamma_{\val}\subseteq\RR$ be the image of $\val$. By ~\cite[Lemma 2.1.15]{tropical}, the valuation $\val$ splits in the sense that there is a group homomorphism $\phi:\Gamma_{\val}\to\KK^*$ such that $\val(\phi(\lambda)) = \lambda$ for all $\lambda\in\Gamma_{\val}$. We will fix such a splitting once and for all.
    We let $\FF$ be the residue field of the valuation ring $\KK_{\geq 0}$ by its maximal ideal $\KK_{>0}$. Here $\KK_{\geq 0}$ consists of all elements of $\KK$ of non-negative valuation, and $\KK_{>0}$ those of positive valuation.

    We use the splitting $\phi$ to define the \emph{lowest order part} of some element
    $y\in\KK^*$ as the image in $\FF$ of $y/\phi(\val(y))$. Note that if two elements in $\KK^*$ have the same valuation, then the lowest order part of their sum is the sum of their lowest order parts, unless the lowest order parts sum to zero. Similarly, for any two elements of  $\KK^*$, the lowest order part of their product is the product of their lowest order parts.
    We say that a set of polynomials has \emph{sufficiently general lowest order parts} if the lowest order parts of the coefficients avoid an implicitly specified Zariski closed subset of the space of lowest order parts of the coefficients.
    
    In the special case where $\KK$ is the field of Puiseux series $\CC\{\{t\}\}$, we have $\Gamma_{\val} = \QQ$ and $\FF = \CC$. There is a canonical splitting $\phi:\QQ \to \KK$ sending $\lambda\in\QQ$ to $t^\lambda\in\KK$. With regards to this splitting, the lowest order part of a Puiseux series is just the (complex) coefficient in front of the term with the smallest exponent. 
    
    \subsection{Osserman-Payne lifting}
    To show that certain points in the tropical Grassmannian lift to give tropical tangents, we will make extensive use of the following lifting result, due to B.~Osserman and S.~Payne \cite{lifting}. The form we state here may be found in \cite[Corollary 4.2]{ilten-len}:
    
    \begin{thm}\label{thm:lifting}
    Let $Y_1,\ldots,Y_k$ be hypersurfaces of $(\KK^*)^n$, and 
    \[
    \alpha\in \trop(Y_1)\cap \cdots\cap \trop(Y_k).
    \]
    Assume that in a neighborhood of $\alpha$, the codimension of 
    $\trop(Y_1)\cap \cdots\cap \trop(Y_k)$ in $\RR^n$ is equal to $k$. Then there exists $P\in Y_1\cap \cdots \cap Y_k$ with $\trop(P)=\alpha$.
    \end{thm}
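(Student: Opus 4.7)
The plan is to reduce the lifting problem to an equality of tropicalizations and then invoke the fundamental theorem of tropical geometry. By Kapranov's theorem (already cited as \cite[Theorem 3.1.3]{tropical}), producing a point $P\in Y_1\cap\cdots\cap Y_k$ tropicalizing to $\alpha$ is equivalent to showing $\alpha\in\trop(Y_1\cap\cdots\cap Y_k)$. The containment
\[
\trop(Y_1\cap\cdots\cap Y_k)\subseteq \trop(Y_1)\cap\cdots\cap \trop(Y_k)
\]
is immediate from the definition of tropicalization. The reverse inclusion fails in general, so the real task is to show equality locally at $\alpha$ whenever the right-hand side has pure codimension $k$ there.

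For this I would appeal to tropical intersection theory. The \emph{stable intersection} $\trop(Y_1)\cdot\ldots\cdot\trop(Y_k)$, obtained by perturbing each tropical hypersurface by a small generic translation and passing to the limit, is always contained in $\trop(Y_1\cap\cdots\cap Y_k)$ as a set; this follows by a one-parameter deformation to a properly intersecting configuration combined with semicontinuity of tropicalization. Moreover, whenever the set-theoretic intersection $\bigcap_i \trop(Y_i)$ has the expected codimension $k$ near $\alpha$, it coincides as a set (indeed as a weighted polyhedral complex) with the stable intersection, since there is no room for the stable cycle to sit in a proper subcomplex of the set-theoretic one. Combining these two statements places $\alpha$ in $\trop(Y_1\cap\cdots\cap Y_k)$, from which the existence of $P$ follows.

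The main obstacle, which is the technical heart of \cite{lifting}, is the equality of stable and set-theoretic intersections under the proper-intersection hypothesis. Concretely, one must check that near $\alpha$ the initial ideal satisfies
\[
\mathrm{in}_\alpha\bigl(I(Y_1)+\cdots+I(Y_k)\bigr)=\mathrm{in}_\alpha(I(Y_1))+\cdots+\mathrm{in}_\alpha(I(Y_k)),
\]
and that its vanishing locus in $(\FF^*)^n$ is non-empty of pure dimension $n-k$. The codimension hypothesis prevents the initial ideal on the right from collapsing to something of smaller dimension, so by a dimension count no extraneous components appear on the left. One then picks a closed point of this initial scheme and lifts it via a Hensel/Newton-polygon argument to a point of $Y_1\cap\cdots\cap Y_k$ with the prescribed tropicalization, yielding the desired $P$.
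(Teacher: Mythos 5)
The paper does not prove this theorem: it is stated without proof as a black-box citation, attributed to Osserman and Payne, with the precise form taken from Corollary 4.2 of the authors' earlier paper \cite{ilten-len}. There is therefore no internal proof to compare against; what you have written is an attempt to reconstruct a known external result.

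Your sketch places the difficulty in the wrong spot. You present the containment of the stable intersection $\trop(Y_1)\cdot\ldots\cdot\trop(Y_k)$ inside $\trop(Y_1\cap\cdots\cap Y_k)$ as a routine consequence of ``a one-parameter deformation plus semicontinuity of tropicalization,'' and then identify the reconciliation of stable and set-theoretic intersections as the technical heart. This is exactly backwards. The reconciliation step is soft combinatorics: tropical hypersurfaces carry strictly positive weights on every facet, so by the fan displacement rule every top-dimensional cell of $\bigcap_i\trop(Y_i)$ near $\alpha$ (once that intersection already has codimension $k$ there) acquires a positive stable multiplicity, forcing the two supports to agree locally. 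By contrast, the containment you wave off is the deep algebro-geometric theorem --- there is no elementary ``semicontinuity of tropicalization'' in families that delivers it, which is precisely why Osserman and Payne resort to nonarchimedean analytic machinery (extended tropicalizations, formal models, tropical multiplicities). Finally, the initial-ideal identity
\[
\mathrm{in}_\alpha\bigl(I(Y_1)+\cdots+I(Y_k)\bigr)=\mathrm{in}_\alpha(I(Y_1))+\cdots+\mathrm{in}_\alpha(I(Y_k))
\]
is neither what that paper establishes nor an automatic consequence of the codimension hypothesis, and you offer no argument for it, so the subsequent ``Hensel/Newton-polygon'' lift has nothing to lift from. If you instead cite the Osserman--Payne stable-intersection theorem as given, the rest of your reduction (Fundamental Theorem to pass from a point of the tropicalization to a point of the variety, then the soft support equality) is sound and essentially matches how the cited Corollary 4.2 is deduced; but as written the crucial implication is asserted rather than proved.
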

    
    \subsection{Grassmannians}
    We denote by $\Gr(m+1,V)$ the Grassmannian parametrizing $(m+1)$-dimensional subspaces of a vector space $V$.
    For $V=\KK^{n+1}$, we write $\Gr(m+1,n+1):=\Gr(m+1,V)$.
    If $V$ is $(n+1)$-dimensional with dual space $V^*$, there is a canonical isomorphism \[\Gr(m+1,V)\to \Gr(n-m,V^*)\] taking an $(m+1)$-plane $L\subset V$ to its orthogonal complement $L^\perp\subset V^*$.
    
    We may represent any $(m+1)$-dimensional linear subspace $L\subset \KK^{n+1}$ non-uniquely as the row span of an $(m+1)\times (n+1)$ matrix $A_L$. 
    For $J\subset \{0,\ldots,n\}$ of size $m+1$, let $p_J(L)$ be the determinant of the square submatrix of $A_L$ with columns indexed by $J$.
    The Grassmannian $\Gr(m+1,n+1)$ is embedded in $\PP^{{n+1\choose m+1}-1}$ via the \emph{Pl\"ucker embedding}, which sends $L$ to the tuple $(p_J(L))_{J}$. The $p_J$ are called \emph{Pl\"ucker coordinates}.
    
    For any subset $J\subset \{0,\ldots,n\}$, we will denote by $\overline J$ its complement $\{0,\ldots,n\}\setminus J$.
    Let $p_I$ (where $|I|=m+1$) and $p_J^*$ (where $|J|=n-m$) respectively be Pl\"ucker coordinates for $\Gr(m+1,\KK^{n+1})$ and $\Gr(n-m,(\KK^{n+1})^*)$. A straightforward computation with determinants shows that the canonical isomorphism \[\Gr(m+1,\KK^{n+1})\to \Gr(n-m,(\KK^{n+1})^*)\] may be described in Pl\"ucker coordinates via
    \begin{equation}\label{eqn:dualp}
    	p_J^*=(-1)^{\sum J}p_{\overline J},\qquad \sum J:=\sum_{j\in J} j.
    \end{equation}
    
    \subsection{Vanishing of Pl\"ucker coordinates}
    Let $X\subset \PP^n$ be an $m$-dimensional variety. 
    Recall that $\G_X:X\dashrightarrow\Gr(m+1,n+1)$ is the Gauss map sending a smooth point $P\in X$ to the point of $\Gr(m+1,n+1)$ corresponding to $T_P X$.
    We are interested in which Pl\"ucker coordinates vanish on the image  of this map.
    
    \begin{thm}\label{thm:gausszero}
    	Consider a projective variety $X\subset\PP^n$ intersecting the dense torus non-trivially. Let $\G(X)$ be the closure of the image of the Gauss map. Then the $J$th Pl\"ucker coordinate $p_J$ vanishes on $\G(X)$ if and only if for every maximal cell $E$ of $\trop(X)$, $\langle E\rangle \cap \langle \overline J \rangle\neq \{0\}$. In particular, if $X$ is a curve, $p_J$ vanishes on $\G(X)$ if and only if $\langle \trop(X)\rangle \subseteq \langle \overline J \rangle$.
    \end{thm}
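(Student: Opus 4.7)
The plan is to reformulate the vanishing of $p_J$ on $\G(X)$ as the failure of the coordinate projection $\pi_J \colon \PP^n \dashrightarrow \PP^{|J|-1}$ (that keeps only the coordinates indexed by $J$) to be generically finite on $X$, and then to translate the resulting dimension statement into the stated tropical condition.

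I would first observe that for any $(m+1)$-dimensional subspace $L \subseteq \KK^{n+1}$, the Pl\"ucker coordinate $p_J(L)$ vanishes if and only if $L \cap \KK^{\overline J} \neq \{0\}$, where $\KK^{\overline J}$ denotes the coordinate subspace on which all $x_j$ with $j \in J$ vanish. Applying this to the affine cone $T_P X \subseteq \KK^{n+1}$ over the projective tangent space at a smooth point $P \in X \cap T$: since such a $P$ has a lift with no zero coordinate, that lift does not lie in $\KK^{\overline J}$, and a short direct computation then shows that $\ker d(\pi_J|_X)_P$ is canonically isomorphic to $T_P X \cap \KK^{\overline J}$. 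Hence $p_J(T_P X) = 0$ exactly when $d(\pi_J|_X)_P$ fails to be injective, and because the Gauss map has dense image in $\G(X)$, this upgrades to the equivalence
\[
p_J \equiv 0 \text{ on } \G(X) \iff \pi_J|_X \text{ is not generically finite} \iff \dim \pi_J(X) < m.
\]

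Next I would pass to the tropical side. Since $\pi_J$ is a coordinate projection, it induces a surjective monomial map of tori, so tropicalization commutes with it,
\[
\trop(\pi_J(X)) = \pi_J^{\trop}(\trop(X)),
\]
where $\pi_J^{\trop} \colon \TP \to \RR^{|J|}/(1,\ldots,1)$ is the linear projection induced by $\pi_J$, and whose kernel is easily checked to be exactly $\langle \overline J \rangle$. Because tropicalization preserves dimension,
\[
\dim \pi_J(X) \;=\; \max_E \dim \pi_J^{\trop}(E) \;=\; \max_E \bigl( m - \dim(\langle E \rangle \cap \langle \overline J \rangle) \bigr),
\]
where $E$ ranges over the maximal cells of $\trop(X)$. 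This maximum is strictly less than $m$ exactly when $\langle E \rangle \cap \langle \overline J \rangle \neq \{0\}$ for every maximal cell $E$, which is the asserted equivalence. The ``in particular'' statement for curves then follows at once: when $m = 1$ each $\langle E \rangle$ is one-dimensional, so $\langle E \rangle \cap \langle \overline J \rangle \neq \{0\}$ is the same as $\langle E \rangle \subseteq \langle \overline J \rangle$, and requiring this of every edge amounts to $\langle \trop(X) \rangle \subseteq \langle \overline J \rangle$.

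I do not anticipate a serious obstacle: the argument is essentially a marriage of two standard tropical facts (tropicalization commutes with monomial projections, and tropicalization preserves dimension) with one piece of elementary linear algebra (the interpretation of $p_J$ as the determinant of a coordinate projection on tangent spaces). The only step needing a bit of care is the identification of $\ker d(\pi_J|_X)_P$ with $T_P X \cap \KK^{\overline J}$, which requires sorting through the interplay between the projective tangent space and its affine cone; this is exactly the point at which the hypothesis $X \cap T \neq \emptyset$ is used, via the fact that a lift of a torus point has no zero coordinates.
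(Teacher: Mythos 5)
Your proof is correct and follows essentially the same route as the paper's: both identify the vanishing of $p_J$ on $\G(X)$ with the coordinate projection $\pi_J$ dropping dimension (equivalently, not being generically finite) on $X$, and both then translate that dimension drop into the tropical statement using that tropicalization commutes with the monomial projection whose kernel is $\langle \overline J \rangle$ and that tropicalization preserves dimension. You spell out the linear-algebra step (identifying $\ker d(\pi_J|_X)_P$ with $T_P X \cap \KK^{\overline J}$) slightly more explicitly than the paper does, but the argument is otherwise the same.
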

    \begin{proof}
    	Let $m$ be the dimension of $X$. Fix some index set $J\subset\{0,\ldots,n\}$ of size $m+1$. Let $\pi:X\dashrightarrow \PP^m$ be the projection onto the coordinates $x_i$ for $i\in J$. The condition $p_J=0$ is equivalent to the differential $d\pi_P:T_P X\to T_{\pi(P)}\PP^m$ being non-injective for every $P$ (where $\pi$ is defined). It follows that $p_J=0$ is equivalent to the condition $\dim(\pi(X))<m$. 
    	
    	We may test this latter condition tropically. Indeed, $\dim(\pi(X))<m$ if and only if $\dim \trop(\pi(X))<\dim \trop(X)$. But $\trop(\pi(X))$ is the same thing as the projection of $\trop (X)$ to $\TP$ sending $e_i$ to $0$  if and only if $i\in \overline J$. Thus, $\dim(\trop(\pi(X)))<\dim (\trop(X))$ if and only if for every $m$-dimensional cell $E$ of $\trop(X)$, $E$ intersects $\langle \overline J \rangle$ non-trivially. This completes the proof of the first claim.
    
    	For the special case of curves, we note that $\langle E \rangle \cap \langle \overline J \rangle \neq \{0\}$ is equivalent to $\langle E \rangle \subseteq \langle \overline J \rangle$ since $\dim \langle E \rangle=1$. The second claim follows.
    \end{proof}
      
    \section{The Tropical Gauss Map}\label{sec:gauss}
    \subsection{Setup}\label{sec:setup}
    We will be considering a curve $X\subset \PP^n$ whose restriction to $(\KK^*)^n$ is the complete intersection of hypersurfaces $X_1,\ldots,X_{n-1}$. Given $\alpha \in \trop(X)$, we wish to determine all $\beta\in\trop(\Gr(2,n+1))$ such that there exists a smooth point $P\in X$ satisfying 
    $\trop(P)=\alpha$ and $\trop(\G_X(P))=\beta$. Here, $\G_X:X\dashrightarrow\Gr(2,n+1)$ is the Gauss map sending a smooth point $P\in X$ to the point of $\Gr(2,n+1)$ corresponding to $T_P X$. We call $\beta$ a \emph{tropical tangent} of $\alpha$.
    
    The tropicalization of $\Gr(2,n+1)$ that we consider will be with respect to its Pl\"ucker embedding. However, it may occur that the image of the Gauss map $\G(X)$ lies outside of the torus, in which case we will effectively be intersecting $\G(X)$ with a lower-dimensional torus. More precisely, we consider $\G(X)$ as a variety inside the smaller-dimensional projective space where we have eliminated all Pl\"ucker coordinates which vanish on $\G(X)$. This is a special case of the extended tropicalization considered in \cite[Section 2]{Payne_AnalytificationTropicalization}. See Theorem \ref{thm:gausszero} for a tropical characterization of when $\G(X)$ is contained in such a smaller projective space.
    
 In order to maintain reasonable control over tropical tangents, we will require some genericity hypotheses.     
 \begin{ass}\label{ass:setup}
	 We will always assume that $X_1,\ldots,X_{n-1}$ form a tropical complete intersection (see \S \ref{sec:ci}). Furthermore, we will assume that for $i=1,\ldots, n-1$ and any co-dimension one cell $E$ of $\trop(X_i)$, the defining equation of $X_i$ has exactly \emph{three} lowest order terms when evaluated along points tropicalizing to the relative interior of $E$.   In particular, the defining equation of $X_i$ has exactly \emph{two} lowest order terms when evaluated along points tropicalizing to the relative interior of a maximal face.
       \end{ass}
    As we are assuming that $X$ is a tropical complete intersection, it follows that $X$ is a trivalent tropical curve.
    This is fulfilled in particular if the $X_i$ are all \emph{tropically smooth} (see \S \ref{sec:basics}), although we do not require that here. 
  
    \begin{rem}
	In order to determine the tropical tangents of $\trop(X)$ at $\alpha\in E\subset \trop(X)$, it suffices to consider hypersurfaces $X_1,\ldots,X_{n-1}$ such that $\trop(X)$ is a tropical complete intersection of the $\trop(X_i)$ only in a neighborhood of $\alpha$ in $\TT^n$.

	For example, consider the planes $X_1=V(x_0+x_1+x_2+x_3)$ and $X_2=V(x_0+t x_1+t^{-1}x_2+x_3)$ in $\PP^3$, and let $X$ be their intersection.
Then $\trop(X_1)$ has a single vertex at $(0,0,0)$ and $\trop(X_2)$ has a single vertex at $(-1,1,0)$. The tropical line $\trop(X)$ is not equal to $\trop(X_1)\cap \trop(X_2)$, since both $\trop(X_1)$ and $\trop(X_2)$ contain the two-dimensional cell 
\[
e_2+	\RR_{\geq 0}\cdot e_1+\RR_{\geq 0}\cdot e_2.
\]
However, $\trop(X_1)$ and $\trop(X_2)$ intersect properly along the ray
\[
	e_2+\RR_{\geq 0}\cdot e_3
\]
and the results of this paper apply to the points $\alpha$ in its relative interior.
		    \end{rem}

    \begin{rem}[Connection to Tropical Elimination Theory]\label{rem:elim}
    As discussed above, we are interested in the tropicalization of $\G(X)$. This can be viewed as a problem in tropical elimination theory. The graph of the morphism $\G_X:X\dashrightarrow  \Gr(2,n+1)$ is a complete intersection (cut out by equations for the $X_i$ along with equations coming from the map). The variety $\G(X)$ is then obtained via a projection.
    
    In \cite[\S4]{elimination}, there is an explicit recipe for the image under a monomial map of the tropicalization of a complete intersection, provided that the complete intersection has \emph{generic coefficients}. However, this does not apply in our setting, since the equations cutting out the graph of $\G_X$ have very special coefficients --- this is the case even if one imposes genericity conditions on the $X_i$. In order to proceed as in \cite{elimination}, one would need to find a suitable resolution of a compactification of the graph of $\G_X$ as discussed in \cite[pg. 560--561]{elimination}.
    
    In the remainder of this article, we will take a different approach to understanding $\trop(\G(X))$, although tropical elimination theory will play a role when we move on to $\trop(\tau(X))$ and $\trop(X^*)$.
    \end{rem}
    
    \subsection{Notation}\label{sec:notation}
    We now fix a cell $E$  in $\trop(X)$. Working in $(\KK^*)^n$, each $X_i$ is cut out by a Laurent polynomial $f_i$ of total degree zero. With $M$ as in \eqref{eqn:M} we may write
    \[
    	f_i=\sum_{w\in \A_i}c_{iw}x^{w}
    \]
    for some finite subset $\A_i\subset M$ and non-zero coefficients $c_{iw}\in\KK$.
    We consider the valuation of the terms of $f_i$ at any $\alpha\in E$. The terms of $f_i$ having minimal valuation depend only on $E$, not on $\alpha$. \emph{After multiplying each $f_i$ by a monomial, we may and will assume that the terms of minimal valuation include the monomial $1$, and all other terms of minimal valuation similarly have valuation zero.}

    If $E$ is an edge, then since $\trop(X)$ is a tropical complete intersection, each $f_i$ only has two terms of minimal valuation. We thus write
    \begin{align}\label{eqn:twoterm}
    	f_i&=1+c_{i\bv_i}x^{\bv_i}
    	+\HOT\qquad i=1,\ldots,n-1,
    \end{align}
    where $\bv_i\in \A_i$, and $c_{i\bv_i}x^{\bv_i}$ also has valuation zero when evaluated at a point tropicalizing to the relative interior of $E$.
    We use the notation $\HOT$ to represent all terms that have higher than minimal valuation for the relative interior of $E$.
    We thus obtain an $(n-1)$-tuple $\bv=(\bv_1,\ldots,\bv_{n-1})\in \prod \A_i$.
    
    If on the other hand $E$ is a vertex, there is one index $i=i_0$ for which $f_i$ has three terms of minimal valuation when evaluated. 
    For $i\neq i_0$ we may write $f_i$ as in \eqref{eqn:twoterm}. For $i=i_0$ we write
    \begin{align}\label{eqn:threeterm}
    	f_{i_0}&=1+c_{i_0\bv_{i_0}}x^{\bv_{i_0}}+c_{i_0\bv_{i_0}'}x^{\bv_{i_0}'}+\HOT.
    \end{align}
    Setting $\bv_i'=\bv_i$ for $i\neq i_0$, we  obtain $(n-1)$-tuples 
    \[\bv=(\bv_1,\ldots,\bv_{n-1})\in \prod \A_i\qquad \bv'=(\bv_1',\ldots \bv_{n-1}')\in \prod \A_i.\]
    
    We will frequently have occasion to consider such $(n-1)$-tuples of elements of $M$. In particular, for $\bw=(\bw_1,\ldots,\bw_{n-1})\in\prod_{i=1}^{n-1}\A_i$, we set
    \[x^\bw:=\prod x^{\bw_i}\qquad c_\bw:=\prod c_{i\bw_i}\qquad a_\bw=c_\bw\cdot x^\bw.\]
    We will denote such $(n-1)$-tuples of elements of $M$ in boldface (e.g.~$\bw$); components of the tuple will be denoted with subscripts (e.g.~$\bw_i$). The $n+1$ components of an element of $M$ will be denoted with a further subscript (e.g.~$\bw_{ij}$ for $\bw_i\in \A_i$, or $w_j$ for $w\in M$).
    We set $\A=\prod \left(\A_i\setminus\{0\}\right)$.
    We will often view an element $\bw\in \A$ as the $(n-1)\times (n+1)$ matrix whose rows are $\bw_1,\ldots,\bw_{n-1}$.

    Consider any $(n-1)\times (n+1)$ matrix $A$ with rows indexed by $1,\ldots,n-1$ and columns indexed by $0,\ldots, n$. For any subset $J\subset \{0,\ldots,n\}$ of size $2$, we denote by $\Delta_J(A)$ the determinant of the $(n-1)\times (n-1)$ submatrix of $A$ with the columns of $J$ removed. 
    We will see that the values of $\Delta_J(\bw)$ for $\bw\in \A$ play a key role in determining the tropical tangents of $X$.
    If $J=\{i,j\}$, we will often write $\Delta_{ij}(\bw)$ instead of $\Delta_J(\bw)$.
    
    \begin{warning}
    	The notation of $f_i$ and $\A$ and $\bv$ we have introduced in this section \emph{depends on the choice of} $E\subset \trop(X)$. Whenever we use such notation, we will have (at least implicitly) fixed some cell $E$ of $\trop(X)$.
    \end{warning}
    
    \begin{ex}[Example \ref{ex:runningP} continued (a curve in $\PP^2$)]\label{ex:runningP2}
    We now compute $\bv$ and when appropriate also $\bv'$ arising from different edges and vertex of the tropical plane curve $\trop(X)$. Note that, due to the assumption that one of the monomials of minimal valuation is always $1$, we may need to multiply the defining equations by a monomial  when considering different faces.

    For the cells $V,E_+,E_-$, we may take 
    \begin{align*}
	    f_1&=1+x_1^{-1}x_2+x_0^{-2}x_1x_2.
    \end{align*}
    We then obtain the following exponent vectors:
    {\scriptsize{
    \[
    \begin{array}{l| l l l }
    &V&E_+&E_-\\
    \hline
    \bv_1 & (0,-1,1) & (0,-1,1) & (-2,1,1)\\
    \bv_1' & (-2,1,1)
    \end{array}.
    \]}}
    
    On the other hand, for the edge $E'$ we may take
    \begin{align*}
	    f_1&=x_1x_2^{-1}+1+x_0^{-2}x_1^2
        \end{align*}
and obtain the exponent vector $\bv_1=(-2,2,0)$.
    \end{ex}
    \begin{ex}[Example \ref{ex:running} continued (a curve in $\PP^3$)]\label{ex:running2}
    We compute the $2$-tuples $\bv$ and when appropriate also $\bv'$ arising from different edges and vertices of the tropical space curve $\trop(X)$. Note that, due to the assumption that one of the monomials of minimal valuation is always $1$, we may need to multiply the defining equations by a monomial  when considering different faces.

    For the cells $V_1,V_3,E_4,E_5,E_6,E_7$, we may take 
    \begin{align*}
    f_1&=1+x_0^2x_2x_3^{-3}+x_0x_2^3x_3^{-4}\\
    f_2&=1+x_1x_2x_3^{-2}+t^3x_0x_1^{-1}
    \end{align*}
    with $i_0=1$ for $V_1,V_3$.
    We then obtain the following exponent vectors:
    {\scriptsize{
    \[
    \begin{array}{l| l l l l l l}
    &V_1&V_3&E_4&E_5&E_6&E_7\\
    \hline
    \bv_1 & (2,0,1,-3) &(2,0,1,-3) & (2,0,1,-3) &(1,0,3,-4) & (2,0,1,-3) &(1,0,3,-4)\\
    \bv_1' & (1,0,3,-4) & (1,0,3,-4)  \\
    \bv_2 & (0,1,1,-2) & (1,-1,0,0) &(0,1,1,-2) & (0,1,1,-2) &(1,-1,0,0)&(1,-1,0,0)\\
    \end{array}.
    \]}}
    
    On the other hand, for the cells $V_2,E_1,E_3$ we may take
    \begin{align*}
    	f_1&=x_0^{-2}x_2^{-1}x_3^3+1+x_0^{-1}x_2^2x_3^{-1}\\
    	f_2&=x_1^{-1}x_2^{-1}x_3^2+1+t^3x_0x_1^{-2}x_2^{-1}x_3^2
    \end{align*}
    with $i_0=2$ for $V_2$.
    We  obtain the following exponent vectors:
    {\scriptsize{
    \[
    \begin{array}{l| l l l }
    &V_2&E_1&E_3\\
    \hline
    \bv_1 & (-1,0,2,-1)& (-1,0,2,-1)& (-1,0,2,-1)\\
    \bv_2 & (0,-1,-1,2) & (0,-1,-1,2) & (1,-2,-1,2)\\
    \bv_2' & (1,-2,-1,2)
    \end{array}.
    \]}}
    
    Finally, for $E_2$ we may take
    \begin{align*}
    f_1&=x_0^{-2}x_2^{-1}x_3^3+1+x_0^{-1}x_2^2x_3^{-1}\\
    f_2&=1+x_1x_2x_3^{-2}+t^3x_0x_1^{-1}
    \end{align*}
    and obtain exponent vectors 
    \[
    \bv_1=(-1,0,2,-1)\qquad \bv_2=(1,-1,0,0).
    \]
    \end{ex}
    \subsection{Pl\"ucker coordinates}\label{sec:plucker}
    To compute the Gauss map, 
%    we will be considering the Jacobian matrix of the $f_i$.
%    Taking the $j$th partial derivative of $f_i$ and multiplying by $x_j$ yields
%    \begin{align*}
%    	&\bv_{i_0j}c_{i_0\bv_{i_0}}x^{\bv_{i_0}}
%    	+\bv'_{i_0j}c_{i_0\bv'_{i_0}}x^{\bv'_{i_0}}
%    	+\HOT.
%    \end{align*}
%    when $E$ is a vertex and $i=i_0$, and
%    \begin{align*}
%    	&\bv_{ij}c_{i\bv_i}x^{\bv_i}
%    	+\HOT\qquad 
%    \end{align*}
%    otherwise.
    let $J\subset \{0,\ldots n\}$ be a set with $2$ elements. We let $q_J$ be the determinant of the $(n-1)\times (n-1)$ submatrix of the Jacobian matrix for the $f_i$ obtained by removing the $J$ columns, that is, 	
    \[
    	q_J=\Delta_J\left(\frac{\partial f_i}{\partial x_j} \right).
    \]
    Consider a smooth point $P\in X$.
    As $J$ ranges over all subsets of $\{0,\ldots,n\}$ of size $2$, each $q_J$ (evaluated at homogeneous coordinates representing $P$) gives the Pl\"ucker coordinate $p_{\overline J}$ for the point in $\Gr(n-1,n+1)$ corresponding to the orthogonal complement in $(\PP^n)^*$ of $T_P X\subset \PP^n$. 
    By \eqref{eqn:dualp}, the valuations of the Pl\"ucker coordinates $p_{J}(T_P X)$ are given by the $\val(q_J)$ for $q_J$ evaluated at homogeneous coordinates representing $P$. Rescaling these homogeneous coordinates by an element of $\KK^*$ will result in a translation of  $(\val(q_J))_J$ by a multiple of the vector $(1,\ldots,1)$.
    
    Hence, we wish to determine the valuations of the $q_J$. To avoid the above ambiguity arising from different choices of homogeneous coordinates for $P$, it will be convenient to instead work with \[z_J:=q_J\cdot \prod_{j\notin J} x_j.\]
    Since this is homogeneous of degree zero, it is a regular function on $(\KK^*)^n$.
    A straightforward computation shows 
    \begin{equation}\label{eqn:zJ}
    	z_J=\sum_{\bw\in\A} \Delta_J(\bw)c_\bw x^{\bw}
    	=\sum_{\bw\in\A} \Delta_J(\bw)a_\bw.
    \end{equation}
    For any $\alpha$ in the relative interior of $E$, set
    \[
    	\nu_\bw(\alpha)=\val(c_\bw)+\alpha\cdot \left(\sum_{i=1}^{n-1}\bw_i\right).
    \]
    In other words, $\nu_\bw(\alpha)$ is the valuation of $a_\bw=c_\bw x^{\bw}$ at $\alpha$. 
    We then set
    		\begin{equation}\label{eqn:min}
    			\nu_J(\alpha)=\min \{\nu_\bw(\alpha)\ |\ \bw\in\A\ \textrm{and}\ \Delta_J(\bw)\neq 0\}.
    		\end{equation}
    If there are no $\bw\in\A$ with $\Delta_J(\bw)\neq 0$, we set $\nu_J(\alpha)=\infty$.		
    We see that $\nu_J(\alpha)\geq 0$. 
    When $E$ is an edge, inequality holds if and only if
    $\Delta_J(\bv)=0$. When $E$ is a vertex, inequality holds if and only if $\Delta_J(\bv')=\Delta_J(\bv)=0$.
    
    By \eqref{eqn:zJ}, for any point $P\in X$ tropicalizing to $\alpha$ in the relative interior of $E$, we then have
    		\begin{equation}\label{eqn:pz}
    	\val(q_J)=\val(z_J)-\sum_{j\notin J}\alpha_j\geq \nu_J(\alpha)-\sum_{j\notin J}\alpha_j.
    \end{equation}
    In the following, we will analyze what valuations $\val(z_J)\geq \nu_J(\alpha)$ are possible. 
    \begin{ex}[Example \ref{ex:runningP} continued (a curve in $\PP^2$)]\label{ex:nuP}
	    In this example, $\Delta_J(\bv)\neq 0$ except when $J=\{1,2\}$ and $\alpha$ belongs to $V$, or $E_+$, or $J=\{0,1\}$ and $\alpha$ belongs to $E'$.
    	It follows that except in these cases, $\nu_J=0$.
    For the exceptional cases, we record the value $\nu_{J}(\alpha)$ along with the $\bw\in \A$ satisfying $\nu_\bw=\nu_{J}$ and $\Delta_{J}(\bw)\neq 0$:
    	\vspace{1cm}
    
    	\begin{center}	\begin{tabular}{l l l l l}
    			&$J$&$\nu_{J}(\alpha)$&$\bw$ with $\nu_\bw=\nu_{J}$ and $\Delta_{J}(\bw)\neq 0$\\
    \toprule
    $V$ & $\{1,2\}$& $0$ &$((-2,1,1))$\\
    $E_+$& $\{1,2\}$ & $\alpha_1+\alpha_2$& $((-2,1,1))$\\
    $E'$& $\{0,1\}$ &$\alpha_1-\alpha_2$&  $( (0,1,-1) )$\\
    \bottomrule
    	\end{tabular}
    \end{center}
	Here, we are choosing coordinates for $\alpha$ such that $\alpha_0=0$.
    \end{ex}
    
    \begin{ex}[Example \ref{ex:running} continued (a curve in $\PP^3$)]\label{ex:nu}
    	In this example, one computes that $\Delta_J(\bv)\neq 0$ except when $J=\{0,1\}$, and $\alpha$ belongs to $V_3$, $E_2$, $E_6$, or $E_7$. 
    	It follows that except in these cases, $\nu_J=0$.
    For the exceptional cases, we record the value $\nu_{01}(\alpha)$ along with the $\bw\in \A$ satisfying $\nu_\bw=\nu_{01}$ and $\Delta_{01}(\bw)\neq 0$:
    	\vspace{1cm}
    
    	\begin{center}	\begin{tabular}{l l l l}
    			&$\nu_{01}(\alpha)$&$\bw$ with $\nu_\bw=\nu_{01}$ and $\Delta_{01}(\bw)\neq 0$\\
    \toprule
    $V_3$ & $3$ &$((2,0,1,-3),(0,1,1,-2))$ and $((1,0,3,-4),(0,1,1,-2))$\\
    $E_2$& $\alpha_1+3\alpha_2-3\alpha_3$& $((-1,0,2,-1),(0,1,1,-2))$\\
    $E_6$& $\alpha_1+2\alpha_2-5\alpha_3$& $((2,0,1,-3),(0,1,1,-2))$\\
    $E_7$& $\alpha_1+4\alpha_2-6\alpha_3$& $((1,0,3,-4),(0,1,1,-2))$.\\
    \bottomrule
    	\end{tabular}
    \end{center}
	Here, we are choosing coordinates for $\alpha$ such that $\alpha_0=0$.
    \end{ex}
    
    \subsection{Genericity conditions and cancellative pairs}\label{sec:genericity}
  We begin this subsection by verifying that tropical complete intersections are generic (with respect to valuations of coefficients).       
    As in Section \ref{sec:notation}, fix finite sets $\A_i\subset M$, where $i=1,2,\ldots,m$ for some $m$, and let
    \begin{equation}\label{spaceOfValuations}
    \CSP =  \prod_i\RR^{|\A_i|} .%\times \RR^{\binom{d_2+n}{n}}\times\cdots\times\RR^{\binom{d_{n-1}+n}{n}}. 
    \end{equation}
The set $\CSP$ is the parameter space for the valuations of the coefficients of polynomials $f_1,\ldots,f_m$ with support $\A_1,\ldots,\A_m$.
 %Note that the union over $\cC_{\prod\A_i}$ as $\A$ varies is the extended tropicalization of the coefficients of all the curves of degree $d$.      
       \begin{lemma}\label{lemma:completeIntersections}
   There is a finite number of hyperplanes in $\CSP$ such that for any point  $\gamma\in \CSP$ in the complement of the union of these hyperplanes, and for every collection of polynomials $f_1,\ldots,f_m$ with support $\A_1,\ldots,\A_m$ and coefficients tropicalizing to $\gamma$, $V(f_1),\ldots,V(f_m)\subset (\KK^*)^n$ either have empty intersection or define a tropical complete intersection. \end{lemma}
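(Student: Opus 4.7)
The plan is to parametrize candidate cells combinatorially and reduce the tropical-complete-intersection condition to finitely many linear constraints on $\gamma \in \CSP$. Each cell of $\trop(V(f_i))$ is associated to a subset $S_i \subseteq \A_i$ with $|S_i| \geq 2$, namely those monomials achieving the minimal valuation along the cell. The cell $E_i(S_i)$, when nonempty, is cut out by the linear equations $\alpha \cdot (w - w') = \gamma_{iw'} - \gamma_{iw}$ for $w, w' \in S_i$, and has codimension $\dim V_i$ where $V_i := \vspan\{w - w' : w, w' \in S_i\} \subset M \otimes \RR$. Since each $\A_i$ is finite, there are only finitely many tuples $(S_1, \ldots, S_m)$ to consider.

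For a fixed tuple, I would observe that $E_1(S_1) \cap \cdots \cap E_m(S_m)$, when nonempty, has codimension $\dim(V_1 + \cdots + V_m) \leq \sum \dim V_i$, with equality if and only if the $V_i$'s are in direct sum. Thus the tropical complete intersection condition fails on this tuple precisely when $E_1 \cap \cdots \cap E_m$ is nonempty \emph{and} the $V_i$'s are not in direct sum. In that case, a dimension count---the full left-nullspace of the combined coefficient matrix has dimension $\sum(|S_i| - 1) - \dim(V_1 + \cdots + V_m)$ while the sum of left-nullspaces of the individual blocks has dimension $\sum(|S_i| - 1 - \dim V_i)$---produces an ``extra'' linear relation $\sum \mu_{i,w,w'}(w - w') = 0$ not supported inside any single $V_i$. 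The associated consistency condition
\[
\sum \mu_{i,w,w'}(\gamma_{iw'} - \gamma_{iw}) = 0
\]
is a nontrivial linear equation in $\gamma$ (a nonzero $\mu_{i,w,w'}$ produces a nonzero coefficient of $\gamma_{iw'}$), cutting out a proper hyperplane $H_{(S_i)} \subset \CSP$. Off this hyperplane, the combined linear system is inconsistent, so $E_1 \cap \cdots \cap E_m$ is empty.

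I would conclude by taking $\mcH$ to be the union of the hyperplanes $H_{(S_i)}$ over the finitely many problematic tuples. For $\gamma \in \CSP \setminus \mcH$, every tuple either has $V_i$'s in direct sum (so the tropical complete intersection property holds automatically) or has empty cell intersection (so the condition is vacuous); hence the $\trop(V(f_i))$ form a tropical complete intersection. The main technical point is the dimension count that extracts an ``extra'' relation when the $V_i$'s fail to be in direct sum, together with the verification that this yields a nontrivial linear functional on $\CSP$; both reduce to straightforward linear algebra, so I do not anticipate any substantial obstacle beyond careful bookkeeping of the combinatorial data.
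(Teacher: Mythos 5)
Your proof is essentially the paper's argument, written in dual linear-algebraic terms: you exhibit an ``extra'' left-nullspace covector $\mu$ and cut out the hyperplane via the consistency condition $\sum\mu_{i,w,w'}(\gamma_{iw'}-\gamma_{iw})=0$, whereas the paper shows that the surjection $\phi:\CSP\to\RR^k$, $c\mapsto(c_{i\br^1_i}-c_{i\br^j_i})_{i,j}$, carries the bad locus into a proper subspace $B$, and a functional annihilating $B$ pulled back along $\phi$ is exactly your constraint. One small slip in the write-up: the parenthetical ``a nonzero $\mu_{i,w,w'}$ produces a nonzero coefficient of $\gamma_{iw'}$'' can fail by cancellation when indexing over all ordered pairs $(w,w')$; it is repaired either by fixing a base exponent $w_0^i$ in each $S_i$ (so the coefficient of $\gamma_{iw}$, $w\neq w_0^i$, is simply $-\mu_{i,w}$), or by noting that a $\mu$ whose coefficients all cancel is a block-wise circulation and so already lies in the sum of the individual block left-nullspaces, hence is not ``extra.''
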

    
    \begin{proof}
    Let $\Sigma_1,\Sigma_2,\ldots,\Sigma_{m}$ be tropical hypersurfaces that do not form a complete intersection.  If their intersection is not empty then there are faces $F_1,F_2,\ldots, F_{m}$ whose intersection $F$ has dimension  greater than $n(1-m) + \sum_{i=1}^{m} \dim F_i$. 
    For each face $F_i$, the polynomial $f_i$ defining    $\Sigma_i$ contains monomials $c_{i\br_i^j}x^{\br_i^j}$ for $i=1,\ldots,k_i$,  (where $k_i = n-\dim F_i +1$ and each $\br_i\in\A_i$), such that the valuations $\val(c_{i\br^j_i}x^{\br^j_i})_{j=1,\ldots,k_i}$ all coincide for every $x$ tropicalizing to any point in $F$. We therefore have $k_i-1$ equations of the form 
    \[
    \val(c_{i\br^j_i}x^{\br^j_i}) =  \val(c_{i\br^1_i}x^{\br^1_i}),
    \]
    or equivalently
    \[
    \val(x)(\br^j_i - \br^1_i) = c_{i\br^1_i} - c_{i\br^j_i}.
    \]
 The fact that the dimension of $F$ is greater than expected implies that the collection of vectors $\br^j_i - \br^1_i$, as  $i$ and $j$ vary, spans a space of dimension smaller than $k:=\sum (k_i-1)$.  In particular, the set 
        \[
    B = \big((\br^1_i- \br^j_i)\cdot\val(x)\big)_{i,j}
    \]
 as $\val(x)$ varies is not full-dimensional in $\RR^k$. 

  Now  consider the map
    \[
    \phi:\CSP\to\RR^{k},
    \]
    \[
    \phi(c) =  \left(c_{i\br^1_i} - c_{i\br^j_i}\right)_{i,j}.
       \]
Since $\phi$ is surjective onto $\RR^k$, $\phi^{-1}(B)$ is not full-dimensinal in $\CSP$. But $\phi^{-1}(B)$ contains the tropicalization of the coefficients of tropical hypersurfaces whose intersection is not a tropical complete intersection.    Note that $B$ and therefore $\phi^{-1}(B)$ only depended on the exponents and not on the coefficients. Removing these subspaces for all choices of $\br_i^j$ ensures that any resulting intersection of hypersurfaces will be a tropical complete intersection.
       \end{proof}

  In order to completely determine the possible valuations of the $z_J$ we will need to introduce  more terminology and make additional genericity assumptions.    
    Let $\alpha$ be in the relative interior of $E\subset \trop(X)$. Two distinct elements $\br,\br'\in\A$ are a \emph{cancellative pair for $\alpha$} if $\val (a_\br)=\val(a_{\br'})$ at $\alpha$. The \emph{critical locus} of $\trop(X)$ consists of those $\alpha\in \trop(X)$ for which there is a cancellative pair. Note that every vertex of $\trop(X)$ is in the critical locus.
    
    \begin{defn}	\label{def:nonColliding}
    We say that the hypersurfaces $X_i$ have \emph{non-colliding valuations} if 
    \begin{enumerate}
    	\item The critical locus is finite; and
    \item \label{def:nonColliding.condition} Whenever $\alpha$ is in the critical locus and $\br,\br'\in\A$ are a cancellative pair for $\alpha$, any other cancellative pair for $\alpha$ must have the form $\bw,\bw':=\bw+\br'-\br$ for some $\bw\in\A$ satisfying $\bw_i=\br_i$  for all $i$ with $\br_i\neq \br_i'$.\footnote{Note that the critical locus and the property of having non-colliding valuations do not depend on any of the choices we made in \S \ref{sec:notation}.}
    
    \end{enumerate}
    \end{defn}
    	\noindent For $\alpha$ in the critical locus, having non-colliding valuations implies that any $\bw\in \A$ belongs to at most one cancellative pair for $\alpha$.
     If $\alpha$ is a vertex and we have non-colliding valuations, it follows that any cancellative pair $\br,\br'$ must satisfy $\br_1=\bv_1$, $\br_1'=\bv_1'$, and $\br_i=\br_i'$ for $i>1$.
    
    Non-colliding valuations is a purely tropical property in the sense that it only depends on the exponents and on the valuations of the coefficients. 
    As we shall  see in Lemma \ref{lem:vg}, coefficients with generically chosen valuations are non-colliding.

    We will occasionally need a stronger condition:
    \begin{defn}\label{defn:genval} For $\alpha$ in the relative interior of $E\subset \trop(X)$, let $\cS_\alpha$ be the multiset of real numbers
    			\[
    				\cS_\alpha=\{\val(c_{iw}x^{w})\}_{\substack{i=1,\ldots,n-1\\ w\in\A_i\setminus\{0,\bv_i\}}},
    				\]
    where the valuations are taken at $\alpha$. 
    We say that the hypersurfaces $X_i$ have \emph{very general valuations} if 
    	\begin{enumerate}
    		\item The critical locus of $\trop(X)$ is finite; and
    		\item\label{defn:genval2} For any point in the critical locus, $\cS_\alpha$ has exactly one linear dependence over $\QQ$. 
    			\end{enumerate}
    \end{defn}
    %\noindent  		After fixing the combinatorial structure of the hypersurfaces $\trop(X_i)$, we notice that there is a Zariski dense set in the space of coefficients of the polynomials $f_i$ for which the $X_i$ have very general valuations. 

 As the next lemma shows, the term    very general valuations is justified.

 \begin{lemma}\label{lem:vg}
        If the $X_i$ have very general valuations, then they also have non-colliding valuations.
Moreover, after removing a finite (respectively countable) number of hyperplanes from $\CSP$, any choice of valuations gives rise to hypersurfaces with non-colliding (respectively very general) valuations. 
    \end{lemma}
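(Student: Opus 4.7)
First, I will prove that very general implies non-colliding. Given a cancellative pair $(\br,\br')$ at a critical point $\alpha$, I would rewrite $\val(a_\br)=\val(a_{\br'})$ as $\sum_i \val(c_{i\br_i}x^{\br_i}) = \sum_i \val(c_{i\br_i'}x^{\br_i'})$, then cancel the indices $i$ for which $\br_i=\br_i'$ and drop the terms with $\br_i=\bv_i$ or $\br_i'=\bv_i$ (which vanish at $\alpha$ by our normalization in \S\ref{sec:notation}); this produces a $\QQ$-linear dependence $R_{(\br,\br')}$ among elements of $\cS_\alpha$ whose nonzero coefficients are all $\pm 1$. A second cancellative pair $(\bw,\bw')$ gives an analogous dependence $R_{(\bw,\bw')}$, and the very general hypothesis forces $R_{(\bw,\bw')}=\lambda\,R_{(\br,\br')}$ for some $\lambda\in\QQ$; since the nonzero coefficients are $\pm 1$, $\lambda=\pm 1$, and after possibly swapping $\bw\leftrightarrow\bw'$ I may take $\lambda=1$. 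A short case analysis index by index (handling separately the sub-cases where $\br_i$ or $\br_i'$ equals $\bv_i$) then yields $\bw_i=\bw_i'$ whenever $\br_i=\br_i'$ and $\bw_i=\br_i,\ \bw_i'=\br_i'$ whenever $\br_i\neq\br_i'$; summing gives $\bw'-\bw=\br'-\br$, which is exactly the non-colliding condition.

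For the genericity statement I would begin by adjoining the finitely many hyperplanes from Lemma \ref{lemma:completeIntersections} to $\CSP$, so we may assume $X_1,\ldots,X_{n-1}$ form a tropical complete intersection. The supports $\A_i$ induce a secondary-fan-type stratification of $\CSP$ into finitely many top-dimensional chambers on which the combinatorial type of each $\trop(X_i)$, and hence of $\trop(X)$, is fixed; within a chamber, each edge direction of $\trop(X)$ is constant in $\gamma$ and each edge basepoint is affine-linear in $\gamma$. For each edge $E$ and each pair $(\br,\br')$ with $\sum\br_i-\sum\br_i'$ orthogonal to the direction of $E$, the condition that $\val(a_\br)=\val(a_{\br'})$ hold identically on $E$ is a single linear equation on $\gamma$; avoiding the finitely many resulting hyperplanes (enumerated over edges, pairs, and chambers) forces the critical locus to be finite. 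Next, for each edge $E$ and each quadruple $((\br,\br'),(\bw,\bw'))$ of cancellative candidates failing the non-colliding form, the requirement that both cancellations hold at a common point of $E$ imposes one further linear condition on $\gamma$. A short computation (using the coefficient analysis of the first paragraph) shows that when $(\bw,\bw')$ \emph{does} satisfy the non-colliding form relative to $(\br,\br')$, the two cancellation equations are equivalent, so related pairs automatically cancel at the same point and contribute no new hyperplane; hence finitely many hyperplanes suffice to secure non-colliding.

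Finally, to upgrade to very general valuations, I would additionally require that at every critical point $\alpha$ the multiset $\cS_\alpha$ admit no $\QQ$-linear dependence beyond those proportional to a cancellative-pair relation. Parameterizing any such dependence by an integer coefficient vector $\ell\in\ZZ^{|\cS_\alpha|}$, the condition ``$\ell$ is a dependence at $\alpha$'' is a single linear equation on $\gamma$, and the set of relevant $\ell$ is countable; this contributes countably many additional hyperplanes. I expect the main technical obstacle to be purely bookkeeping: one must verify that every bad locus enumerated above is genuinely a hyperplane (rather than the full space or the empty set), which follows because every condition is $\QQ$-linear in $\gamma$, and one must confirm that each ``for each edge/pair/chamber'' quantifier ranges over a finite set, which follows from finiteness of the supports $\A_i$ and of the secondary-fan decomposition.
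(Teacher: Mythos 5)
Your proposal follows essentially the same route as the paper's proof. The first paragraph (very general $\Rightarrow$ non-colliding via comparing linear dependences in $\cS_\alpha$ with coefficients in $\{-1,0,1\}$, deducing $\lambda = \pm 1$, and then doing an index-by-index case analysis to recover the non-colliding form) is precisely the argument the paper gives. Your second and third paragraphs — fix the combinatorial type via a polyhedral subdivision of $\CSP$, note that within a fixed type each edge is affine-linear in $\gamma$, enumerate potential cancellative pairs and $\QQ$-linear relations, and for each derive a linear condition on $\gamma$ — also mirror the structure of the paper's proof, including the observation that pairs in non-colliding form with $(\br,\br')$ automatically cancel simultaneously and so impose no new condition.

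The one step where your justification is too quick is the assertion that the bad loci are genuine hyperplanes ``because every condition is $\QQ$-linear in $\gamma$.'' $\QQ$-linearity alone does not preclude a condition that holds identically on $\CSP$, in which case it could never be avoided. The paper's proof supplies the missing ingredient: it constructs an auxiliary linear map $\CSP\to\RR^n$ (resp.\ $\CSP\to\RR^{n+1}$ for the very general upgrade) whose coordinates are $c_{1\bv_1},\ldots,c_{(n-1)\bv_{n-1}}$ together with $\sum_i(c_{i\br'_i}-c_{i\br_i})$ (resp.\ plus one further $\QQ$-combination of coefficients for the second relation), observes that these functionals are linearly independent on $\CSP$ — essentially because $\br\neq\br'$ forces at least one coordinate $c_{iw}$ with $w\neq\bv_i$ to appear, and the additional relation in the very general case is assumed independent of the first — hence the map is surjective, and then shows the bad set of $\gamma$ is exactly the preimage of a proper linear subspace of the target, hence itself proper. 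This linear-independence/surjectivity check is the real content making ``remove finitely (resp.\ countably) many hyperplanes'' correct, and needs to be carried out explicitly rather than being dismissed as bookkeeping.
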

Before beginning the proof, we introduce one more piece of terminology.
    Given polynomials $f_1,\ldots,f_{n-1}$ defining a tropical complete intersection $\trop(X)$, their \emph{combinatorial type} 
consists of the following data.
\begin{itemize}
\item The abstract graph $\Gamma$ underlying $\trop(X)$; 
\item For each edge or vertex $E$ of $\Gamma$ and for each $i=1,\ldots,n-1$, the set of exponent vectors in  $\A_i$ such that the corresponding monomials of $f_i$ obtain the  minimal valuation along $E$.
\end{itemize}
It is straightforward to see that there are only finitely many possible combinatorial types, and that these combinatorial types induce a polyhedral subdivision of $\CSP$.

\begin{proof}[Proof of Lemma \ref{lem:vg}]
	    Consider any point $\alpha$ in the critical locus.
We first note that any cancellative pair  $\br,\br'$ for $\alpha$ gives rise to a non-trivial linear relation in $S_\alpha$. Indeed, considering valuations 
at $\alpha$, we see that
        \[
    \sum_i\val(c_{i\br_i}x^{\br_i})-\val(c_{i\br'_i}x^{\br'_i}) = 0.
\]
Disregarding the terms coming from the monomials $c_{i\bv_i}x^{\bv_i}$ (which have valuation $0$ at $\alpha$) and the terms with $\br_i=\br_i'$, we obtain a non-trivial linear relation in $S_\alpha$ with all coefficients of the linear combination in $\{-1,0,1\}$.

Suppose now that at $\alpha$, the only non-trivial linear relation in $S_\alpha$ whose coefficients are in $\{-1,0,1\}$ is the above relation coming from $\br,\br'$ (or its negative). 
Let $\bw,\bw'$ be any cancellative pair for $\alpha$.
Then after possibly interchanging $\bw$ and $\bw'$, we have that the non-trivial relation in $S_\alpha$ for $\bw,\bw'$ is the same as that for $\br,\br'$.
This implies that if $\br_i=\br_i'$, then $\bw_i=\bw_i'$, and if $\br_i\neq \br_i'$, we must have $\bw_i=\br_i$ and $\bw_i'=\br_i'$.
We have thus shown in particular that the property of having very general valuations implies having non-colliding valuations.

We next turn to show that we can remove a finite (respectively countable) number of hyperplanes from the space $\CSP$ to achieve non-colliding (respectively very general) valuations. 
    From Lemma \ref{lemma:completeIntersections},  after removing a finite number of hyperplanes from $\CSP$, every tuple of hypersurfaces gives rise to a  tropical complete intersection curve. We will now fix a combinatorial type $\Gamma$ and an edge or vertex $E$ of $\Gamma$. We will show that we can remove a finite number of hyperplanes from $\CSP$ so that the resulting tropical complete intersections of combinatorial type $\Gamma$ have only finitely many critical points along $E$. Furthermore, we will show that we can remove an additional finite (respectively countable) number of hyperplanes so that these critical points satisfy the requirements of non-colliding (respectively very general) valuations. Since there are only finitely many combinatorial types, and each type only has finitely many edges and vertices, the claim of the theorem will follow.

     For any curve $X$ of type $\Gamma$, using the  notation of Section \ref{sec:notation},  there is a tuple  $\bv\in\A$ such that
    \[
    \val(c_{i\bv_i}x^{\bv_i}) = 0
    \]
    for every $i$ and for every $x\in X$ tropicalizing to a point in the relative interior of $E$. That is,
    \[
    \bv_i\cdot\val(x)  = - \val(c_{i\bv_i})
    \]
    for $i=1,\ldots, n-1$.

    Now suppose that $E$ is an edge and $\br$ and $\br'$ are a cancellative pair for a point $x\in X$ tropicalizing to $E$. Then 
    \[
    \sum_i\left(\val(c_{i\br_i}x^{\br_i})-\val(c_{i\br'_i}x^{\br'_i})\right) = 0,
    \]
    namely
    \[
    \val(x)\cdot \sum_i  (\br_i - \br'_i) = \sum_i(\val(c_{i\br'_i})-\val(c_{i\br_i})).
    \]
    If $\sum_i  (\br_i - \br'_i)$ meets $\bv_1,\bv_2,\ldots,\bv_{n-1}$ transversally in $\TP\cong \RR^n$ then there is a single value of $\val(x)$ (and in particular at most a single point on the edge $E$) satisfying these equations. 
    Otherwise, the image $A$ of the map
    \begin{align*}
\RR^n&\to \RR^n\\
\alpha&\mapsto (\bv_1\cdot \alpha,\ldots,\bv_{n-1}\cdot \alpha, \sum_i((\br_i-\br'_i))\cdot \alpha)
\end{align*}
is a subspace of $\RR^n$ of dimension at most $n-1$. Consider the map
    \[
    \CSP\to\RR^n,
    \]
    \[
    c\mapsto \left(c_{1\bv_{1}}, c_{2\bv_{2}},\ldots, c_{(n-1)\bv_{n-1}}, \sum_i(c_{i\br'_i} - c_{i\br_i})\right).
    \]
    The preimage of $A$ in $\CSP$ by the above map contains  the tropicalizations of coefficients of tropical curves that may have more than a single critical point on an edge. Since the map is surjective, the preimage of $A$ in $\CSP$ is  not full-dimensional. Removing these preimages of $A$ for any choice of $\br,\br'$ ensures that we have a finite number of critical points in $E$.

   Fix a point $\alpha\in E$ in the critical locus.
   As above, there is a tuple  $\bv\in\A$ such that
       \[
    \bv_i\cdot \alpha  = - \val(c_{i\bv_i})
    \]
    for $i=1,\ldots, n-1$.
     Furthermore, since $\alpha$ is in the critical locus, we have
    \[
    \val(x)\cdot \sum_i  (\br_i - \br'_i) = \sum_i(\val(c_{i\br'_i})-\val(c_{i\br_i}))
    \]
    for some critical pair $\br,\br'$.
  
    Let us now check Condition \ref{defn:genval2} of Definition \ref{defn:genval}.  Suppose that we have a linear relation in $S_\alpha$ that is independent of the linear relation coming from the critical pair $\br,\br'$. Then for any $x$ with $\trop(x)=\alpha$, there exist rational numbers $\lambda_{iw}$ with 
    \[
    \sum_{i,w} \lambda_{iw} \val(c_{iw} x^w) = 0
    \]
    or equivalently,
    \[
    \val(x)\cdot\sum_{i,w} a_{iw} w  = -\sum_{i,w} a_{iw} \val(c_{iw}).
    \]
Furthermore, since this relation is independent of the one coming from $\br,\br'$, the map 
    \[
    \CSP\to\RR^{n+1},
    \]
    \[
    c\mapsto \left(c_{1\bv_{1}}, c_{2\bv_{2}},\ldots, c_{(n-1)\bv_{n-1}}, \sum_i(c_{i\br'_i} - c_{i\br_i}),\sum_{i,w} \lambda_{iw}c_{iw}  \right).
    \]
    is surjective.
   
    Let  $B$ be the image of the linear map
    \begin{align*}
	    \RR^n&\to\RR^{n+1}\\
	    \alpha&\mapsto (\bv_1\cdot \alpha,\bv_2\cdot \alpha,\ldots,\bv_{n-1}\cdot \alpha,  \sum_i  (\br_i - \br'_i)\cdot \alpha,(\sum_{i,w} a_{iw} w)\cdot \alpha ).
    \end{align*}
    The dimension of $B$ is at most $n$. Since the above map from $\CSP$ to $\RR^{n+1}$ is surjective, the preimage of $B$ in $\CSP$ is not full-dimensional. Furthermore, the preimage of $B$ in $\CSP$ by the map contains the tropicalization of the coefficients of the tropical curves for which 
the given linear relation among the elements of $S_\alpha$ holds. Since there are only countably many possible linear relations over $\QQ$, we may thus remove a countable number of hyperplanes of $\CSP$ so that for the remaining tropical complete intersections, $S_\alpha$ only has a single linear dependence.

Thus, we have shown that we may remove a countable number of hyperplanes from $\CSP$ so that the resulting tropical complete intersections have very general valuations. For the weaker condition of non-colliding valuations, we note that as in the start of the proof, we only have to consider finitely many linear dependencies among the elements of $S_\alpha$, namely, those with coefficients in $\{-1,0,1\}$. Thus, we may remove a finite number of hyperplanes from $\CSP$  so that the resulting tropical complete intersections have non-colliding valuations.
\end{proof}

    \begin{rem}
	    Lemma \ref{lem:vg} implies that in particular, there is a Zariski dense set in the space of coefficients of the polynomials $f_i$ for which the $X_i$ have very general valuations. While being Zariski dense does not, in itself, imply genericity, it can be used to prove various generic properties, e.g. \cite[Corollary 1.3]{ilten-len}.
    
    %The properties of having very general  or non-colliding valuations are both purely tropical conditions, depending only on the exponents and valuations of coefficients of the equations defining the $X_i$.
    \end{rem}
    \begin{ex}[Example \ref{ex:runningP} continued (a curve in $\PP^2$)]\label{ex:critP}
    The only element of the critical locus for the tropical plane curve $\trop(X)$ is the vertex $V$.
    \end{ex}

    \begin{ex}[Example \ref{ex:running} continued (a curve in $\PP^3$)]\label{ex:crit}
    	We describe the critical locus for the tropical space curve $\trop(X)$, whose projection is depicted in Figure \ref{fig:crit}.
    Clearly $V_1,V_2,V_3$ are in the critical locus as all vertices are. The only cancellative pairs for $\alpha=V_1$ are 
    \begin{align*}
    	\bv&=((2,0,1,-3),(0,1,1,-2)),\bv'=((1,0,3,-4),(0,1,1,-2)); \textrm{and}\\
    \br&=((2,0,1,-3),(1,-1,0,0)),\br'=((1,0,3,-4),(1,-1,0,0))
    \end{align*}
    and $\br'=\br+\bv'-\bv$ as required in the definition of non-colliding valuations. One similarly checks that $\alpha=V_2,V_3$ satisfy the requirements of non-colliding valuations.
    
    The critical locus contains the following additional points:
    \begin{align*}
    	\left(\frac{9}{8},\frac{3}{8},\frac{6}{8}\right)\in E_1\qquad 
    	\left(3,\frac{3}{8},\frac{6}{8}\right)\in E_2\qquad 
    	\left(\frac{-3}{4},\frac{9}{4},\frac{3}{4}\right)\in E_4\\
    	(-2,-4,-3)\in E_5 \qquad \left(3,\frac{9}{4},\frac{3}{4}\right)\in E_6\qquad (3,-4,-3)\in E_7.
    \end{align*}
    For each such point, there is only a single cancellative pair. Hence, $X_1$ and $X_2$ have non-colliding valuations. In fact, they have very general valuations, since $S_\alpha$ always contains two elements, with at most one of them equal to zero.
    \end{ex}
    
    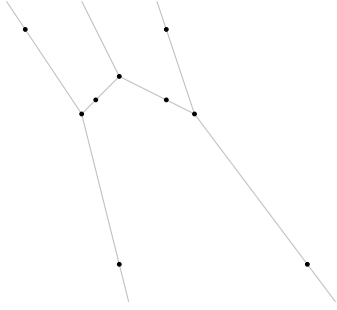
\begin{figure}
    	\begin{tikzpicture}[scale=.5]
    		\draw[lightgray] (0,0) -- (1,1) -- (3,0);
    		\draw[lightgray] (-2,3) -- (0,0) -- (1.25,-5);
    		\draw[lightgray] (2,3) -- (3,0) -- (6.75,-5);
    		\draw[lightgray] (1,1) -- (0,3);
    \draw[fill] (1,1) circle [radius=0.05];
    \draw[fill] (0,0) circle [radius=0.05];
    \draw[fill] (3,0) circle [radius=0.05];
    \draw[fill] (.375,.375) circle [radius=0.05];
    \draw[fill] (2.25,.375) circle [radius=0.05];
    \draw[fill] (-1.5,2.25) circle [radius=0.05];
    \draw[fill] (1,-4) circle [radius=0.05];
    \draw[fill] (2.25,2.25) circle [radius=0.05];
    \draw[fill] (6,-4) circle [radius=0.05];
    	\end{tikzpicture}
    \caption{The critical locus for Example \ref{ex:crit}}\label{fig:crit}
    \end{figure}
    
    \subsection{No cancellation}
    We now return to our study of what values the $\val(z_J)$ can obtain when evaluated at a point $P\in X$ tropicalizing to $\alpha\in E\subset \trop(X)$. In fact, the following proposition shows that in ``generic'' situations, $\val(z_J)=\nu_J$.
    
    \begin{prop}\label{prop:nocancel}
    	Consider $J\subset \{0,\ldots,n\}$ of size $2$.
    	The valuation of $z_J$ for the tropical point $\alpha\in\trop(X)$ is uniquely determined and equal to $\nu_J$
    in the following cases:
    \begin{enumerate}
    	\item	 The point $\alpha$ is not in the critical locus;\label{c1}
    	\item  The $\bw$ achieving the minimum in \eqref{eqn:min} is not part of a cancellative pair $\br,\br'$ with both
    		$\Delta_J(\br)$ and $\Delta_J(\br')$ nonzero;\label{c2}
    	\item The $X_i$ have non-colliding valuations, $\alpha$ is a vertex, and the $\bw$ achieving the minimum in \eqref{eqn:min} is part of a cancellative pair $\br,\br'$ with $\Delta_J(\br)=\Delta_J(\br')$; or\label{c3}
    	\item  The $X_i$ have non-colliding valuations and sufficiently general lowest order parts, $\alpha$ is not a vertex, and the $\bw$ achieving the minimum in \eqref{eqn:min} is part of a cancellative pair $\br,\br'$ such that $\sum_i(\br-\br')_i$ belongs to the vector space spanned by $\{\bv_i\}_{i=1}^{n-1}$.\label{c4}
    \end{enumerate}
    \end{prop}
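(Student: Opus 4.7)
The plan is to control the potential cancellations in the sum $z_J = \sum_{\bw} \Delta_J(\bw) a_\bw$. Let $\mathcal{T} = \{\bw\in\A : \Delta_J(\bw)\neq 0,\ \nu_\bw(\alpha) = \nu_J(\alpha)\}$ be the set of terms achieving the minimal valuation at $\alpha$. Since $\val(z_J(P)) \geq \nu_J(\alpha)$ with equality iff $\sum_{\bw \in \mathcal{T}} \Delta_J(\bw) a_\bw(P)$ does not vanish to higher order, in each case I would show either that $|\mathcal{T}| = 1$ or that the sum has a nonzero lowest-order part.

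Cases (1) and (2) fall out immediately. In (1), $\alpha$ lying outside the critical locus means no two elements of $\A$ share a valuation at $\alpha$, forcing $|\mathcal{T}| = 1$. In (2), any $\bw''\in\mathcal{T}\setminus\{\bw\}$ would by definition form a cancellative pair with the minimizer $\bw$, both having $\Delta_J\neq 0$, contradicting the hypothesis. For case (3), the non-colliding hypothesis at the vertex $\alpha$ forces $\mathcal{T}\subseteq\{\bw,\bw'\}$ where $\bw, \bw'$ differ only in the first coordinate (swapping $\bv_1 \leftrightarrow \bv_1'$). Since $\Delta_J(\bw) = \Delta_J(\bw')$, the relevant contribution to $z_J$ factors at $P$ as $\Delta_J(\bw)\bigl(c_{1\bv_1}x^{\bv_1} + c_{1\bv_1'}x^{\bv_1'}\bigr)\prod_{i>1} c_{i\bw_i}x^{\bw_i}$, and the three-term equation $f_{i_0}(P) = 0$ forces $c_{1\bv_1}x^{\bv_1}(P) + c_{1\bv_1'}x^{\bv_1'}(P) \equiv -1$ modulo higher valuation, so this factor has nonzero lowest-order part and $\val(z_J) = \nu_J$.

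Case (4) is the main difficulty and requires the genericity assumption. Non-colliding valuations again reduce $\mathcal{T}$ to at most two elements $\bw$ and $\bw^* = \bw + \br'-\br$; if $|\mathcal{T}|=1$ we are done, so assume both $\Delta_J$'s are nonzero. For cancellation at lowest order, one needs
\[
    \frac{a_{\bw^*}(P)}{a_\bw(P)} = \Bigl(\prod_{i : \br_i \neq \br_i'} \frac{c_{i\br_i'}}{c_{i\br_i}}\Bigr)\cdot x^{\sum_i (\br_i' - \br_i)}(P)
\]
to equal $-\Delta_J(\bw)/\Delta_J(\bw^*)$ modulo higher valuation. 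The hypothesis $\sum_i(\br_i' - \br_i) \in \vspan\{\bv_1,\ldots,\bv_{n-1}\}$, together with the linear independence of the $\bv_i$ furnished by the tropical complete intersection condition along the edge $E$, lets me write this exponent uniquely as $\sum_j \lambda_j \bv_j$ with $\lambda_j\in\QQ$. Using the edge relations $c_{j\bv_j}x^{\bv_j}(P) \equiv -1$ coming from $f_j(P) = 0$, the ratio reduces to an explicit expression in the lowest-order parts of the $c_{i\br_i}$, $c_{i\br_i'}$, and $c_{j\bv_j}$. Cancellation then becomes a single nontrivial polynomial equation in these lowest-order parts, which is violated for sufficiently general coefficients. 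The crux, and the reason one needs the span hypothesis, is precisely that it removes any residual dependence of the ratio on the specific lift $P$, reducing cancellation to an algebraic constraint on the coefficients alone.
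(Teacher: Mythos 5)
The proposal takes the same overall approach as the paper: control cancellation among the terms of $z_J$ of minimal valuation at $\alpha$. Cases \eqref{c1}, \eqref{c2}, \eqref{c3} are handled correctly and essentially identically to the paper's argument; in particular, your case \eqref{c3} treatment (directly factoring $\Delta_J(\bw)a_\bw + \Delta_J(\bw')a_{\bw'}$ through $\prod_{i\neq i_0}c_{i\bw_i}x^{\bw_i}$ and invoking $f_{i_0}(P)=0$ to get a lowest-order part of $-1$) is a cleaner rendering of the same manipulation the paper carries out by subtracting an auxiliary multiple from $f_{i_0}$.

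Case \eqref{c4} has the right idea but contains a real gap. You write $\sum_i(\br_i'-\br_i)=\sum_j\lambda_j\bv_j$ with $\lambda_j\in\QQ$ and then "use the edge relations $c_{j\bv_j}x^{\bv_j}(P)\equiv -1$" to reduce the lowest-order part of the ratio to an explicit expression in the lowest-order parts of the coefficients. This step is not valid when the $\lambda_j$ fail to be integers: $\lambda_j\bv_j$ need not lie in $M$ even though $\sum_j\lambda_j\bv_j$ does, so $x^{\sum\lambda_j\bv_j}(P)$ does not decompose unambiguously into a product of integer powers of the $x^{\bv_j}(P)$. After clearing denominators you only pin down the $d$-th power of the quantity you want, leaving a $d$-th root of unity undetermined. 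The paper's Lemma \ref{lemma:finite} handles exactly this: it projects $P$ into $\spec\KK[L]$ for $L=M\cap\vspan\{\bv_i\}$ and observes that the lowest-order parts satisfy a zero-dimensional system, so they take one of finitely many values, all determined by $\widetilde{c}_{k\bv_k}$. You should invoke that lemma (or reprove its content) rather than the naive substitution. A second, smaller issue: the final sentence asserts the cancellation condition is a \emph{nontrivial} polynomial equation in the lowest-order parts without justification. One must actually exhibit a coefficient that appears in the cancellation identity but is not among the $c_{k\bv_k}$ controlling the monomial lowest-order part; this is precisely what the paper's careful choice of indices $i,j$ with $\br_i\neq\bv_i$, $\br_j'\neq\bv_j$, $(\br_i,i)\neq(\br_j',j)$ accomplishes. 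Your ``crux'' remark correctly identifies why the span hypothesis is needed, but the missing pieces above are required to make case \eqref{c4} go through.
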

    \begin{proof}
    We evaluate $z_J$ at some point $P\in X$ tropicalizing to $\alpha$.
    In the cases \ref{c1} and \ref{c2}, the expression for $z_J$ from \eqref{eqn:zJ} has a unique (non-zero) term of minimal valuation, and the claim is immediate. In any other case, we notice that $\val(z_J)$ must be at least the quantity in \eqref{eqn:min}. Suppose that we are in case \ref{c3} but equality does not hold in \eqref{eqn:min}. Since $\alpha$ is a vertex and we have assumed non-colliding valuations, our previous discussion on cancellative pairs implies that $\br_{i_0}=\bv_{i_0}, \br'_{i_0}=\bv'_{i_0}$, and $\br_i=\br'_i$ for $i\neq i_0$. It follows that 
    \[
    	(\Delta_J(\br)a_\br+\Delta_J(\br')a_{\br'})\cdot\frac{1}{\Delta_J(\br)\prod_{i\neq i_0} c_{i\br_i}x^{\br_i}}=c_{i_0\bv_{i_0}}x^{\bv_{i_0}}+c_{i_0\bv_{i_0}'}x^{\bv_{i_0}'}.
    \]
    Therefore when evaluated at $P$, the left hand side of 
    \[
    	f_{i_0}-\frac{1}{\Delta_J(\br)\prod_{i\neq {i_0}} c_{i\br_i}x^{\br_i}}\left(\sum_{\bw\in\A} \Delta_J(\bw)a_\bw-z_J\right)=0
    \]
    has $1$ as its unique term of minimal valuation, contradicting its vanishing.
    Hence, equality must hold in case \ref{c3}.
    
    For the final case, let $\br,\br'$ be the cancellative pair from the hypothesis. By non-colliding valuations, the only terms of $z_J$ of minimal valuation are $\Delta_J(\br)a_\br$ and $\Delta_J(\br')a_\br'$. By case \ref{c2} we may assume that neither $\Delta_J(\br)$ nor $\Delta_J(\br')$ vanish.
    Since neither $\br$ nor $\br'$ can equal $\bv$, it must be the case that there are indices $i,j$ such that $\br_i\neq \bv_i$, $\br_j'\neq \bv_j$, and $(\br_i,i)\neq (\br_j',j)$. 
    
    We use Lemma \ref{lemma:finite} below to see that there are only finitely many possible lowest order parts of $x^{\br}$ and $x^{\br'}$,
    and these are independent of $c_{i\br_i}$ and $c_{j\br_j'}$.
    By choosing the lowest order parts of $c_{i\br_i}$ and $c_{j\br_j'}$ sufficiently generally after fixing all other lowest order parts of coefficients, we see that the lowest order parts of $\Delta_J(\br)a_\br$ and $\Delta_J(\br')a_{\br'}$ cannot cancel, so the valuation of $z_J$ must be as claimed.
    \end{proof}
    
    \begin{rem}
    	In Lemmas \ref{lemma:edge} and \ref{lemma:vert} we will give combinatorial interpretations of some of the criteria of Proposition \ref{prop:nocancel}.
    
    \end{rem}
    
    The following lemma was used in the proof above.
    \begin{lemma}\label{lemma:finite}
    Fix a non-vertex $\alpha \in\trop(X)$.
    	Let $L$ be the intersection of $M$ with the vector space spanned by the vectors $\{\bv_i\}_{i=1}^{n-1}$, and consider any $w\in L$. As $P$ varies over all $P\in X$ tropicalizing to $\alpha$, there are only finitely many possible lowest order parts of $x^w$ evaluated at $P$. Furthermore, these depend only on the lowest order parts of $c_{i\bv_i}$.
    
    \end{lemma}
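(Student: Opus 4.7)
The plan is to exploit the fact that $\alpha$ lies in the relative interior of an edge (not at a vertex), so by the setup of \S\ref{sec:notation} each $f_i$ has exactly two terms of minimal valuation at $\alpha$, both of valuation zero: the constant $1$ and $c_{i\bv_i}x^{\bv_i}$. Since $f_i(P)=0$, the lowest order parts of these two terms must cancel. This forces
\[
\text{lop}(x^{\bv_i}(P)) = -\text{lop}(c_{i\bv_i})^{-1} =: \xi_i \in \FF^*,
\]
a specific element depending only on the lowest order part of $c_{i\bv_i}$. Thus the lowest order part of $x^{\bv_i}(P)$ is in fact uniquely determined, not just constrained to a finite set.

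Next, I would use the definition of $L$ to extend this to arbitrary $w\in L$. Since $L\otimes\QQ = \vspan_\QQ(\bv_1,\ldots,\bv_{n-1})$, for any $w\in L$ there exist a positive integer $N$ and integers $n_1,\ldots,n_{n-1}$ such that $Nw=\sum_i n_i\bv_i$ in $M$. Consequently $x^{Nw}=\prod_i (x^{\bv_i})^{n_i}$, and since taking lowest order parts is multiplicative on products in $\KK^*$, evaluating at $P$ yields
\[
\text{lop}(x^w(P))^N \;=\; \text{lop}(x^{Nw}(P)) \;=\; \prod_i \xi_i^{n_i} \in \FF^*.
\]
Thus $\text{lop}(x^w(P))$ is an $N$-th root of a fixed element of $\FF^*$, so it takes at most $N$ values, all determined by the $\xi_i$, i.e.\ by the lowest order parts of the $c_{i\bv_i}$.

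The only delicate point is well-definedness of the right-hand side when the $\bv_i$ are $\QQ$-linearly dependent, so that the decomposition $Nw=\sum n_i\bv_i$ is not unique. I would handle this by remarking that any relation $\sum m_i\bv_i = 0$ in $M$ gives the identity $\prod_i(x^{\bv_i})^{m_i}=1$ in $\KK[M]$; evaluating at any $P$ and taking lowest order parts shows $\prod_i\xi_i^{m_i}=1$, so the product $\prod_i\xi_i^{n_i}$ is independent of the chosen presentation of $Nw$. This is the main (minor) obstacle; once addressed, the conclusion is immediate.
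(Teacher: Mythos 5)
Your proof is correct and takes essentially the same route as the paper's: both arguments hinge on the observations that $f_i(P)=0$ forces the lowest order part of $x^{\bv_i}(P)$ to equal $-\text{lop}(c_{i\bv_i})^{-1}$, and that finiteness for a general $w\in L$ comes from the sublattice $\sum_i\ZZ\bv_i$ having finite index in $L$. The paper packages this as saying the lowest order parts of $\pi(P)$ give a point of $\spec\FF[L]$ satisfying the zero-dimensional binomial system $1+\widetilde c_{i\bv_i}x^{\bv_i}=0$; your version with $N$-th roots (plus the compatibility check along relations $\sum m_i\bv_i=0$) is a concrete unwinding of the same fact.
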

    \begin{proof}
    Consider any  $P\in X$ tropicalizing to $\alpha$. 
    The inclusion $L\subseteq M$ induces a surjection of tori $\pi:\spec \KK[M]\to \spec \KK[L]$. Taking the lowest order parts of $\pi(P)\in\spec \KK[L]$, we obtain $Q\in \spec \FF[L]$. These lowest order parts are a solution in $\spec \FF[L]$ for
    	\[
    		1+ \widetilde c_{i\bv_i}x^{\bv_i},\qquad i=1,\ldots n-1
    	\]
    	where  $\widetilde c_{i\bv_i}$ is the lowest order part of $c_{i\bv_i}$. Since the $\bv_i$ span $L$, this is a zero-dimensional system and only has finitely many solutions. The claims of the lemma follow. 
    	\end{proof}
    
    	\begin{ex}[Example \ref{ex:runningP} continued (a curve in $\PP^2$)]\label{ex:runningnocancelP}
    	Clearly, Proposition \ref{prop:nocancel} applies to all $\alpha\in \trop(X)$ that are not the vertex $V$.
 We record the resulting valuations of the $q_J$ in Table \ref{table:edgesP}. For this, we use \eqref{eqn:pz}, along with the computations in Example \ref{ex:nuP}. We have highlighted the entries of the table for which $\val(z_J)\neq 0$ by putting them in a box.

 We note that Proposition \ref{prop:nocancel} also applies when $\alpha=V$ and $J=\{1,2\}$ or $J=\{0,1\}$. 
       We will finish computing tropical tangents at $V$ for this example  
       in Example \ref{ex:simultP}.
	\end{ex}

    	\begin{table}	\scriptsize\begin{tabular}{l l l l l }
    			$E$ & $\alpha\in E$ &  $\val(q_{12})$& $\val(q_{02})$&$\val(q_{01})$\\
    			\toprule
			$E_+$ & $s\cdot (1,1)$ & $\fbox{2s}$ & $-s$& $-s$\\ \addlinespace
    			$E_-$ & $s\cdot (-1,1)$ & $0$& $s$& $-s$ \\ \addlinespace
			$E'$ & $s\cdot (0,-1)$ & $0$& $0$ & $\fbox{2s}$\\ \addlinespace
    		    \bottomrule
    	\end{tabular}
    	\vspace{.5cm}
    
    	\caption{Tropical tangents for edges in Example \ref{ex:runningnocancelP}}\label{table:edgesP}
    \end{table}

    	\begin{ex}[Example \ref{ex:running} continued (a curve in $\PP^3$)]\label{ex:runningnocancel}
    Proposition \ref{prop:nocancel} applies to all $\alpha\in \trop(X)$ that are not in the critical locus. However, it also applies in this example to the points of the critical locus that are not vertices. Indeed, by our computations in Example \ref{ex:nu}, for any point $\alpha$ in an edge, the $\bw$ achieving the minimum in \eqref{eqn:min} is not part of a cancellative pair. Hence, case \ref{c2} of Proposition \ref{prop:nocancel} applies. 
    
    We record the resulting valuations of the $q_J$ in Table \ref{table:edges}. For this, we use \eqref{eqn:pz}, along with the computations in Example \ref{ex:nu}. We have highlighted the entries of the table for which $\val(z_J)\neq 0$ by putting them in a box.
    We will finish computing tropical tangents for this example by determining the tropical tangents at $V_1,V_2,V_3$ 
    in Example \ref{ex:simult}. We note here that Proposition \ref{prop:nocancel} also applies to $z_{01}$ for $V_2$, since at this point,
    $\Delta_{01}(\bv)=\Delta_{01}(\bv')$.
    
    \end{ex}
    	\begin{table}	\scriptsize\begin{tabular}{l l l l l l l l}
    			$E$ & $\alpha\in E$ & $\val(q_{23})$& $\val(q_{13})$ &$\val(q_{12})$& $\val(q_{03})$ &$\val(q_{02})$&$\val(q_{01})$\\
    			\toprule
    			$E_1$ & {\makecell[l]{$s\cdot (3,1,2)$,\\ $0\leq s \leq 1$}} & $-3s$ & $-s$ & $-2s$ & $-4s$ & $-5s$ & $-3s$\\ \addlinespace
    			$E_2$ & {\makecell[l]{$(3,0,0)+s\cdot (0,1,2)$,\\ $0\leq s \leq 1$}} & $-3$ & $-s$ & $-2s$ & $-3-s$ & $-3-2s$ & \fbox{$3-6s$}\\ \addlinespace
    			$E_3$ & {\makecell[l]{$(3,1,2)+s\cdot (3,2,4)$,\\ $0\leq s$}} & $-3-3s$ & $-1-2s$ & $-2-4s$ & $-4-5s$ & $-5-6s$ & $-3-6s$\\ \addlinespace
    			$E_4$ & {\makecell[l]{$s\cdot (-1,3,1)$,\\ $0\leq s$}} & $s$ & $-3s$ & $-s$ & $-2s$ & $0$ & $-4s$\\ \addlinespace
    			$E_5$ & {\makecell[l]{$s\cdot (-2,-4,-3)$,\\ $0\leq s$}} & $2s$ & $4s$ & $3s$ & $6s$ & $5s$ & $7s$\\ \addlinespace
    			$E_6$ & {\makecell[l]{$(3,0,0)+s\cdot (0,3,1)$,\\ $0\leq s$}} & $-3$ & $-3s$ & $-s$ & $-3-3s$ & $-3-s$ & \fbox{$3-3s$}\\ \addlinespace
    			$E_7$ & {\makecell[l]{$(3,0,0)+s\cdot (0,-4,-3)$,\\ $0\leq s$}} & $-3$ & $4s$ & $3s$ & $-3+4s$ & $-3+3s$ & \fbox{$3+9s$}\\ \addlinespace
    \bottomrule
    	\end{tabular}
    	\vspace{.5cm}
    
    	\caption{Tropical tangents for edges in Example \ref{ex:runningnocancel}}\label{table:edges}
    \end{table}
    
    \subsection{Cancellation}
    	We continue our study of $\val(z_J)$, and deal with the cases not covered by Proposition \ref{prop:nocancel}:
    
    	\begin{prop}\label{prop:cancel}
    		Let $J\subset \{0,\ldots,n\}$ be of size $2$ and consider $\alpha$ in the critical locus of $\trop(X)$.
    		Assume that the $X_i$ have non-colliding valuations, and the $\bw$ achieving the minimum in \eqref{eqn:min} is part of a cancellative pair $\br,\br'$ with both $\Delta_J(\br)$ and $\Delta_J(\br')$ nonzero. Then for this $\alpha$, $\val(z_J)$ can be any quantity greater than or equal to $\nu_J$ if one of the following holds:
    	 \begin{enumerate}
    		 \item If $\alpha$ is a vertex, and $\Delta_J(\br)\neq \Delta_J(\br')$; or
    		 \item If $\alpha$ is not a vertex, and $\sum_i(\br-\br')_i$ does not belong to the vector space spanned by $\{\bv_i\}_{i=1}^{n-1}$.
    	 \end{enumerate}
    \end{prop}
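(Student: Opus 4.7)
The plan is to exploit the freedom in choosing $P\in X$ with $\trop(P)=\alpha$ so as to make $\val(z_J(P))$ attain any prescribed value $\geq\nu_J$. Formula \eqref{eqn:zJ} together with non-colliding valuations and the assumption that both $\Delta_J(\br),\Delta_J(\br')$ are nonzero show that $\Delta_J(\br)a_\br$ and $\Delta_J(\br')a_{\br'}$ are the unique two monomials of $z_J$ of valuation $\nu_J$ at $\alpha$, while every other monomial has strictly higher valuation. Hence it suffices to study $R(P):=\Delta_J(\br)a_\br(P)+\Delta_J(\br')a_{\br'}(P)$ and show that $\val(R(P))$ can realize any value $\geq\nu_J$.

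My first step would be to exhibit cancellation at the leading order. In case (1), non-colliding valuations force $\br_i=\br'_i$ for $i\neq i_0$, together with $\br_{i_0}=\bv_{i_0}$ and $\br'_{i_0}=\bv'_{i_0}$. Letting $u\in\FF$ denote the lowest order part of $c_{i_0\bv_{i_0}}x^{\bv_{i_0}}$ evaluated at $P$, the leading-order vanishing of $f_{i_0}$ forces the lowest order part of $c_{i_0\bv'_{i_0}}x^{\bv'_{i_0}}$ to be $-1-u$. A direct computation then gives the ratio of lowest order parts of $a_{\br'}$ to $a_\br$ as $(-1-u)/u$, so leading-order cancellation amounts to the single linear equation $u\,\Delta_J(\br)=(1+u)\,\Delta_J(\br')$, which has a unique solution $u\in\FF$ thanks to the hypothesis $\Delta_J(\br)\neq\Delta_J(\br')$; one checks this $u$ is neither $0$ nor $-1$, so it corresponds to a genuine leading-order solution. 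In case (2), the leading-order equations $1+\widetilde{c_{i\bv_i}x^{\bv_i}}=0$ determine the lowest order part of $x^w$ only for $w$ in $L:=\vspan\{\bv_i\}\cap M$; since $\sum_i(\br-\br')_i\notin L$ by hypothesis, varying $P$ within its tropical fiber allows us to set the ratio of lowest order parts of $a_{\br'}$ to $a_\br$ to any element of $\FF^*$, in particular to $-\Delta_J(\br)/\Delta_J(\br')$, producing the desired cancellation. In either case, a Newton-polygon/Hensel-style refinement of Theorem \ref{thm:lifting} produces a genuine $P\in X$ realizing this leading order.

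The final step is to promote a single cancellation into the full claim that every valuation $\geq\nu_J$ is attained. Since $X\cap T$ is one-dimensional, after fixing a leading-order part of $P$ for which the $\nu_J$-terms of $R$ vanish, there remains a one-parameter family of higher-order extensions of $P$ on $X$. Repeating the leading-order analysis at each successive order of the value group $\Gamma_\val$, the next-order coefficient of $R$ can be freely set to any element of $\FF$, allowing us to either cancel further (pushing $\val(z_J(P))$ strictly higher) or realize a specific intermediate valuation. Iterating over all levels yields every target valuation in $\Gamma_\val\cap[\nu_J,\infty)$. The main obstacle is precisely this inductive step: one must carefully track how higher-order perturbations of $P$ affect successive orders of $R$ while keeping $P\in X$, which is in essence a repeated application of Theorem \ref{thm:lifting} (or a Newton-polygon-type argument) at each successive order of expansion.
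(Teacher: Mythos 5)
Your reduction to the two minimal-valuation terms $\Delta_J(\br)a_\br+\Delta_J(\br')a_{\br'}$ and your leading-order computation (the equation $u\,\Delta_J(\br)=(1+u)\,\Delta_J(\br')$ in case (1), the freedom in the ratio of lowest order parts when $\sum_i(\br-\br')_i\notin\vspan\{\bv_i\}$ in case (2)) correctly identify where the hypotheses enter. But the proof as proposed has a genuine gap, and you have in fact named it yourself: the ``inductive step.'' Theorem \ref{thm:lifting} as stated controls only valuations, not lowest order parts, so the ``Newton-polygon/Hensel-style refinement'' you invoke is not available in the paper and would itself need proof. More seriously, the order-by-order iteration is not well-founded: the value group can be dense (e.g.\ $\QQ$ for Puiseux series), so there is no ``next order'' to induct on, and the claim that the next coefficient of $R$ can be set freely while keeping $P$ on the codimension-$(n-1)$ locus $X$ is precisely the content that must be proved --- it requires re-using the transversality hypotheses at every stage, which your sketch does not do. Without this, you only get that cancellation at leading order is possible, i.e.\ $\val(z_J)>\nu_J$ can occur; you do not get that \emph{every} value $\geq\nu_J$ is attained.

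The paper avoids the iteration entirely by a one-shot application of Osserman--Payne lifting to an augmented system: fix the target value of $z_J$ (any element of valuation $\lambda>\nu_J$), treat \eqref{eqn:zJ} as one more hypersurface equation in $(\KK^*)^n$, and in case (1) replace $f_{i_0}$ by $f_{i_0}'=f_{i_0}-\tfrac{1}{\Delta_J(\br')\prod_{i\neq i_0}c_{i\br'_i}x^{\br'_i}}\bigl(\sum_\bw\Delta_J(\bw)a_\bw-z_J\bigr)$, whose two minimal-valuation terms are $1$ and a nonzero multiple of $c_{i_0\bv_{i_0}}x^{\bv_{i_0}}$ exactly because $\Delta_J(\br)\neq\Delta_J(\br')$. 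Near $\alpha$ the tropicalizations of these $n$ hypersurfaces are hyperplanes meeting transversally (orthogonal to $\bv_{i_0}-\bv'_{i_0},\bv_1,\ldots,\bv_{n-1}$ in case (1), to $\sum_i(\br-\br')_i,\bv_1,\ldots,\bv_{n-1}$ in case (2)), so Theorem \ref{thm:lifting} produces $P\in X$ with $\val(z_J)=\lambda$ directly; the boundary value $\val(z_J)=\nu_J$ is handled by a separate lifting with the auxiliary equation $m\,\Delta_J(\br)a_\br+\Delta_J(\br')a_{\br'}=0$, $m\neq 1$. If you want to salvage your approach, you should replace the order-by-order induction with this kind of argument, or else prove a lifting statement with prescribed initial degenerations; as written, the proposal does not establish the proposition.
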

    \begin{proof}
    	In both cases, we will use Osserman-Payne lifting (Theorem \ref{thm:lifting})  to show that for any $\lambda > \nu_J(\alpha)$, there exists $(P,z_J)\in X\times \KK^*$  such that
    	\eqref{eqn:zJ}
    	is satisfied and $\trop(P,z_J)=(\alpha,\lambda)$.
    	In both cases,  $\val(z_J)=\lambda>\nu_J$ implies that the terms of minimal valuation (at $\alpha$) of the right-hand-side of \eqref{eqn:zJ} are $\Delta_J(\br)a_\br+\Delta_J(\br)a_{\br'}$.
    	
    For the first case, first fix any $z_J$ with $\val(z_J)>\nu_J$. We define 
    \[
    	f_{i_0}':=f_{i_0}-\frac{1}{\Delta_J(\br')\prod_{i\neq i_0} c_{i\br_i'}x^{\br_i'}}\left(\sum_{\bw\in\A} \Delta_J(\bw)a_\bw-z_J\right).
    \]
    The terms of minimal valuation (at $\alpha$) of $f_{i_0}'$ are $1$ and some non-zero multiple of $c_{i_0\bv_{i_0}}x^{\bv_{i_0}}$. For this, we are using that
    $\br_{i_0}=\bv_{i_0}$,
    $\br'_{i_0}=\bv'_{i_0}$,
    and
    $\br_i=\br_i'$ for $i\neq i_0$.
    We may now apply  Osserman-Payne lifting to the hypersurfaces in $(\KK^*)^n$ cut out by \eqref{eqn:zJ} and $f_1,\ldots,f_{i_0-1},f_{i_0}',f_{i_0+1},\ldots,f_{n-1}=0$.
    Indeed, near 
    $\alpha$ their tropicalizations are hyperplanes orthogonal to $\bv_1-\bv_1', \bv_1,\ldots,\bv_n$, and thus intersect properly at $\alpha$.
    
    To see that we can also achieve $\val(z_J)=\nu_J$,
    let 
    \[
    	f_{i_0}'':=f_{i_0}-\frac{1}{\Delta_J(\br')\prod_{i\neq i_0} c_{i\br_i'}x^{\br_i'}}\left(
    	m\cdot \Delta_J(\br)a_\br+\Delta_J(\br')a_{\br'}\right).
    \]
    for any $m\in \ZZ$ with $m\neq 1$.
    Consider the hypersurfaces in $(\KK^*)^n$ cut out by $f_1,\ldots,f_{i_0-1},f_{i_0}'',f_{i_0+1},\ldots,f_{n-1}=0$ along with $m\cdot \Delta_J(\br)a_\br+\Delta_J(\br')a_{\br'}=0$. Osserman-Payne lifting also applies here, and we obtain $P\in \trop(X)$ such that the monomials 
    $\Delta_J(\br)a_\br,\Delta_J(\br')a_{\br'}$ will no longer cancel in \eqref{eqn:zJ}, giving $\val(z_J)=\nu_J$.

    In the second case, when $\lambda>\nu_J$ we may fix $z_J$ with $\val(z_J)>\nu_J$ and directly apply Osserman-Payne lifting to the hypersurfaces in $(\KK^*)^n$ cut out by \eqref{eqn:zJ} and $f_1,\ldots,f_{n-1}=0$. Indeed, near 
    $\alpha$ their tropicalizations are hyperplanes orthogonal to $\sum_i(\br-\br')_i, \bv_1,\ldots,\bv_n$, and thus intersect properly at $\alpha$.
    The argument for the claim when $\val(z_J)=\nu_J$ is similar to the first case.
    \end{proof}
    
    \begin{rem}\label{rem:cancel}
    	We may drop the assumption of non-colliding valuations from Proposition \ref{prop:cancel} if we instead assume that $\alpha$ is a vertex and $\Delta_J(\bv)$, $\Delta_J(\bv')$, and $\Delta_J(\bv)-\Delta_J(\bv')$ are all non-zero.
    \end{rem}
    
    If we assume that $X_1,\ldots,X_{n-1}$ have non-colliding valuations and sufficiently general lowest order parts, for every $\alpha\in \trop(X)$ we may apply either Proposition \ref{prop:nocancel} or \ref{prop:cancel} to determine all possible values of $\val(z_J)$ for $P\in X$ tropicalizing to $\alpha$. However, we need to understand which values of $\val(z_J)$ are \emph{simultaneously} possible as $J$ ranges over all subsets of $\{0,\ldots, n\}$ of size $2$. For those $\alpha$ and $J$ covered by Proposition \ref{prop:nocancel}, there is nothing to do, since $\val(z_J)$ is uniquely determined. We deal with the cases of Proposition \ref{prop:cancel} in the following subsection.
    	\begin{ex}[Example \ref{ex:runningP} continued (a curve in $\PP^2$)]\label{ex:simultP}
   It remains to compute the tropical tangents at the vertex $V$. We have already seen that for $J=\{1,2\}$ or $J=\{0,1\}$, $\val(z_J)=\nu_J=0$. For $J=\{0,2\}$ we may apply Proposition \ref{prop:cancel} to obtain that $\val(z_J)$ takes any value greater than or equal to zero.

   Hence, the tropical tangents at $V$ have Pl\"ucker coordinates whose valuations have the form \[\val(q_{12})=0\qquad \val(q_{02})=s \qquad \val(q_{12})=0\]
   for any $s\geq 0$.
	\end{ex}

    \subsection{Simultaneous cancellation}\label{sec:simult}
    We introduce some notation for the remainder of this section. We will always assume that $\alpha$ is in the critical locus of $\trop (X)$, and that the $X_i$ either have non-colliding valuations or that the assumptions of Remark \ref{rem:cancel} hold. The point $P\in \trop(X)$  will always tropicalize to $\alpha$.

     We call indices $J$ for which one of the conditions of Proposition \ref{prop:cancel} is satisfied \emph{cancellative}. For any cancellative index $J$ , we let $\br_J,\br_J'$ be the associated cancellative pair. By the assumption on non-colliding valuations, after possibly interchanging $\br_J$ and $\br'_J$, we may assume that $\br_J-\br_J' =  \br_I-\br_I'$ for any cancellative indices $I$ and $J$. 
    \begin{prop}\label{prop:delta}
    	Let $I,J$ be cancellative indices, and assume that $\val(z_I)>\nu_I$ at $P\in\trop(X)$. Then $\val(z_J)>\nu_J$ at $P$ if and only if
    	\begin{equation}\label{eqn:delta}
    		\frac{\Delta_J(\br_J)}{\Delta_J(\br_J')}=
    		\frac{\Delta_I(\br_I)}{\Delta_I(\br_I')}.
    	\end{equation}
    \end{prop}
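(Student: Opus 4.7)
The plan is to reduce both $\val(z_J)>\nu_J$ and $\val(z_I)>\nu_I$ to equalities of lowest order parts in $\FF$, and then to bridge them via the coincidence $\br_J-\br_J'=\br_I-\br_I'$.

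First I would identify the minimum-valuation contributions to $z_J=\sum_\bw \Delta_J(\bw)\,a_\bw$ evaluated at a point $P$ tropicalizing to $\alpha$. Under the standing non-colliding valuations hypothesis (or under the alternative of Remark \ref{rem:cancel}), the only $\bw\in\A$ with $\Delta_J(\bw)\ne 0$ and $\nu_\bw(\alpha)=\nu_J(\alpha)$ are $\br_J$ and $\br_J'$. At a vertex this is forced by the setup of \S\ref{sec:notation}: each factor $\val(c_{i,\bw_i})+\bw_i\cdot\alpha$ is non-negative with equality only when $\bw_i\in\{\bv_i,\bv_i'\}$, leaving only $\bv$ and $\bv'$. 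At an edge critical point, the uniqueness of the cancellative pair is built into Definition \ref{def:nonColliding}. Consequently the minimum-valuation contribution to $z_J(P)$ is exactly $\Delta_J(\br_J)\,a_{\br_J}+\Delta_J(\br_J')\,a_{\br_J'}$. Setting $r:=a_{\br_J}/a_{\br_J'}$, which is a valuation-zero element of $\KK^*$ by the cancellative property, $\val(z_J)>\nu_J$ is equivalent to the vanishing of the corresponding lowest-order part, i.e., to
\[
\widetilde r=-\Delta_J(\br_J')/\Delta_J(\br_J)
\]
in $\FF$, where $\widetilde{\,\cdot\,}$ denotes lowest order part. The analogous statement for $I$ gives that $\val(z_I)>\nu_I$ is equivalent to $\widetilde{(a_{\br_I}/a_{\br_I'})}=-\Delta_I(\br_I')/\Delta_I(\br_I)$.

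Next I would establish the crucial identity $a_{\br_J}/a_{\br_J'}=a_{\br_I}/a_{\br_I'}$ in $\KK^*$. The coordinatewise hypothesis $\br_J-\br_J'=\br_I-\br_I'$ gives $x^{\br_J-\br_J'}(P)=x^{\br_I-\br_I'}(P)$, so it suffices to verify $c_{\br_J}/c_{\br_J'}=c_{\br_I}/c_{\br_I'}$. Non-colliding valuations forces $(\br_I)_i=(\br_J)_i$ and $(\br_I')_i=(\br_J')_i$ at every index $i$ with $(\br_J)_i\ne(\br_J')_i$, while at the remaining indices $(\br_J)_i=(\br_J')_i$ and $(\br_I)_i=(\br_I')_i$. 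Hence the factors at the latter indices cancel in both quotients, and the surviving factors match index-by-index, giving the desired identity. (In the Remark \ref{rem:cancel} setting one has $\br_J=\br_I=\bv$ and $\br_J'=\br_I'=\bv'$, so the identity is tautological.)

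Combining the previous two steps, the hypothesis $\val(z_I)>\nu_I$ pins down $\widetilde r$ to $-\Delta_I(\br_I')/\Delta_I(\br_I)$, so $\val(z_J)>\nu_J$ holds if and only if $-\Delta_I(\br_I')/\Delta_I(\br_I)=-\Delta_J(\br_J')/\Delta_J(\br_J)$, which rearranges to \eqref{eqn:delta}. I expect the most delicate step to be the identification of leading terms: ruling out stray $\bw\in\A$ with $\nu_\bw(\alpha)=\nu_J(\alpha)$ and $\Delta_J(\bw)\ne 0$ beyond $\br_J$ and $\br_J'$ is precisely where the fine structure of Definition \ref{def:nonColliding} (and the setup in \S\ref{sec:notation}) must be invoked.
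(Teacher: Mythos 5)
Your proof is correct and reaches the same conclusion, but it is packaged differently from the paper's argument. The paper inserts the tautology $z_I=\sum_{\bw}\Delta_I(\bw)a_\bw$ into the expression for $z_J$, obtaining
\[
z_J=\sum_{\bw}\Delta_J(\bw)a_\bw+\frac{\Delta_J(\br_J)a_{\br_J}}{\Delta_I(\br_I)a_{\br_I}}\Bigl(z_I-\sum_{\bw}\Delta_I(\bw)a_\bw\Bigr),
\]
and then tracks cancellation of the valuation-$\nu_J$ terms in the resulting (partially $z_I$-dependent) expression; the assumption $\val(z_I)>\nu_I$ removes the $z_I$-contribution from the leading terms. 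You instead pass to the residue field $\FF$ and express each of $\val(z_I)>\nu_I$ and $\val(z_J)>\nu_J$ as a single equation for the lowest order part of the ratio $a_{\br_J}/a_{\br_J'}=a_{\br_I}/a_{\br_I'}$, from which \eqref{eqn:delta} falls out by comparing right-hand sides. Both arguments hinge on exactly the same identity $a_{\br_J}a_{\br_I'}=a_{\br_J'}a_{\br_I}$ coming from non-colliding valuations, and both need that the only $\bw$ of minimal valuation with $\Delta_J(\bw)\ne 0$ are $\br_J,\br_J'$. Your route is arguably cleaner and more symmetric; the paper's manipulation, while more ad hoc here, is what sets up the inductive rewriting of Lemma \ref{lemma:cancel} and Algorithm \ref{alg:zalg}, so it earns its keep downstream.

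One small inaccuracy: in justifying that $\br_J,\br_J'$ are the only minimal-valuation terms with $\Delta_J\ne 0$, you split into an edge case (handled via Definition \ref{def:nonColliding}) and a vertex case, and for the vertex case you argue from \S\ref{sec:notation} that the only elements with valuation $\nu_J$ are $\bv$ and $\bv'$. That presupposes $\nu_J=0$. If $\alpha$ is a vertex with $\langle\Lambda\rangle\subseteq\langle\overline J\rangle$ (cf.\ Lemma \ref{lemma:vert}), then $\Delta_J(\bv)=\Delta_J(\bv')=0$, $\nu_J>0$, and the cancellative pair $\br_J,\br_J'$ lives strictly above the $\bv,\bv'$ level. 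In that case the correct justification is the same non-colliding argument you already invoke for edges: any element of $\A$ belongs to at most one cancellative pair, so at most two elements of $\A$ can share a given valuation, and they must be $\br_J,\br_J'$. (Under the Remark \ref{rem:cancel} alternative this cannot happen because $\Delta_J(\bv),\Delta_J(\bv')\ne 0$ is assumed, so your vertex argument is fine there.) This is easily repaired and does not affect the structure of your proof.
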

    \begin{proof}
    	First, we may modify \eqref{eqn:zJ} by adding a multiple of the similar expression for $z_I$ to obtain 
    \begin{equation*}
    	z_J=\sum_{\bw\in\A} \Delta_J(\bw)a_\bw
    	=\sum_{\bw\in\A} \Delta_J(\bw)a_\bw+\frac{\Delta_J(\br_J)a_{\br_J}}{\Delta_I(\br_I)a_{\br_I}}
    	\left(z_I-\sum_{\bw\in\A} \Delta_I(\bw)a_\bw\right).
    \end{equation*}
    The minimal valuation terms in the first expression for $z_J$ are exactly
    $\Delta_J(\br_J)a_{\br_J}+\Delta_J(\br_J')a_{\br_J'}$, each of which has valuation $\nu_J$. We will show that both of them are cancelled by terms from the expression that we added on the right precisely when condition \eqref{eqn:delta} holds. 
    
    Since $\val(z_I)>\nu_I$, the minimal valuation terms of the expression that we have added on the right are
    \[
    -\frac{\Delta_J(\br_J)a_{\br_J}}{\Delta_I(\br_I)a_{\br_I}}
    	\left(
    	\Delta_I(\br_I)a_{\br_I}+\Delta_I(\br_I')a_{\br_I'}
    	\right)
    =-\Delta_J(\br_J)a_{\br_J}-\frac{\Delta_J(\br_J)a_{\br_J}}{\Delta_I(\br_I)a_{\br_I}}
    	\Delta_I(\br_I')a_{\br_I'}.
    	\]
     Thus, the term $\Delta_J(\br_J)a_{\br_J}$ is cancelled. Now, let $K$ be the set of indices $i$ for which $(\br_I)_i\neq (\br_I')_i$.	The assumption on non-colliding valuations implies that  %$\br_J'=\br_J+\br_I'-\br_I$. 
    \[
    	\frac{c_{\br_J}c_{\br'_I}}{c_{\br_I}}
    	=\prod_i \frac{c_{i(\br_J)_i}c_{i(\br_I')_i}}{c_{i(\br_I)_i}}
    	=\prod_{i\in K} c_{i(\br_I')_i}
    	\prod_{i\notin K} c_{i(\br_J)_i}
    	=\prod_{i\in K} c_{i(\br_J')_i}
    	\prod_{i\notin K} c_{i(\br_J')_i}
    	=c_{\br_J'},
    \]
    from which it follows that $a_{\br_J}a_{\br_I'}/a_{\br_I}=a_{\br_J'}$. Therefore, the term $\Delta_J(\br_J')a_{\br_J'}$ is cancelled as well if and only if 
    \begin{equation}
    		\frac{\Delta_J(\br_J)}{\Delta_J(\br_J')}=
    		\frac{\Delta_I(\br_I)}{\Delta_I(\br_I')}.
    	\end{equation}
    The claim follows.
    \end{proof}

    We have thus determined for which collections of indices $J$ one may simultaneously have $\val(z_J)>\nu_J$. It remains to determine the exact values. For this, we will consider the ring 
    \[R(\A):=\QQ[y_\bw^{\pm 1}\ |\ \bw\in\A] \]
    of Laurent polynomials in variables $y_\bw$, where $\bw\in\A$. 
    We then have a natural ring homomorphism
    \begin{align*}
    	\phi:R(\A)	&\to \KK[M]\\
    	y_\bw&\mapsto c_\bw\cdot x^\bw \qquad \bw\in\A.
    \end{align*}
    We note that in light of \eqref{eqn:zJ}, we have 
    \[
    	z_J=\phi\left( \sum_{\bw\in\A} \Delta_J(\bw)y_\bw)\right).
    \]
    
    For any Laurent monomial $\xi\in R(\A)$ and point $\alpha\in \TP$, we may consider the valuation 
    $\val(\phi(\xi))$ of $\phi(\xi)$ at $\alpha$,
    since $\phi$ maps monomials to monomials. By a slight abuse of notation, we will refer to this simply as the valuation of $\xi$ at $\alpha$.

    \begin{lemma}\label{lemma:mon}
    	Assume that the $X_i$ have very general valuations, and let $\br,\br'$ be a cancellative pair for a point $\alpha\in X$.
    Let $\xi\in R(\A)$ be a Laurent monomial. Then for any other Laurent monomial $\xi'\in R(\A)$ with $\deg \xi'=\deg \xi$ and $\val(\phi(\xi'))=\val(\phi(\xi'))$ at $\alpha$, there exists $\gamma\in\QQ$ and $j\in\ZZ$ such that
    	\[
    \phi(\xi')=\gamma\cdot a_\br^ja_{\br'}^{-j}\cdot \phi(\xi).
    	\]
    \end{lemma}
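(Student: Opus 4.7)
The plan is to analyze the Laurent monomial $\eta := \xi'/\xi \in R(\A)$, which has degree zero. Writing $\eta = \prod_{\bw \in \A} y_\bw^{m_\bw}$ with $\sum_\bw m_\bw = 0$, and setting $K_{iw} := \sum_{\bw \in \A:\, \bw_i = w} m_\bw$ for each $i$ and each $w \in \A_i$, expanding gives
\[
\phi(\eta) \;=\; \prod_\bw a_\bw^{m_\bw} \;=\; \prod_i \left(\prod_{w \in \A_i} c_{iw}^{K_{iw}}\right) x^{\sum_i\sum_{w} K_{iw}\, w}.
\]
The degree-zero condition forces $\sum_{w \in \A_i} K_{iw} = 0$ for every $i$.

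First, I would translate the valuation hypothesis into a linear dependence among the elements of $\cS_\alpha$. The condition $\val(\phi(\xi')) = \val(\phi(\xi))$ at $\alpha$ rewrites as $\sum_\bw m_\bw\, \val(a_\bw) = 0$. Since $\val(a_\bw) = \sum_{i:\,\bw_i \neq \bv_i} \val(c_{i\bw_i} x^{\bw_i})$, this unpacks as a $\QQ$-linear dependence among the elements of $\cS_\alpha$ whose coefficient on $\val(c_{iw} x^w)$ (for $w \in \A_i \setminus \{0,\bv_i\}$) is precisely the integer $K_{iw}$.

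Second, I would invoke the very general valuations hypothesis. The cancellative relation $\val(a_\br) = \val(a_{\br'})$ is itself a non-zero $\QQ$-linear dependence among the elements of $\cS_\alpha$, whose coefficient on $\val(c_{iw} x^w)$ (for $w \neq \bv_i$) equals $\mathds{1}(w=\br_i) - \mathds{1}(w=\br'_i) \in \{-1,0,+1\}$. Since this is the \emph{only} such dependence up to $\QQ$-scaling, there exists $j \in \QQ$ with
\[
K_{iw} \;=\; j\bigl(\mathds{1}(w=\br_i) - \mathds{1}(w=\br'_i)\bigr) \qquad \text{for all } i \text{ and all } w \in \A_i \setminus \{\bv_i\}.
\]
Integrality of $K_{iw}$, combined with the presence of a $\pm 1$ entry on the right, forces $j \in \ZZ$.

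Finally, I would use the constraint $\sum_w K_{iw} = 0$ to verify that the formula above extends to the case $w = \bv_i$ as well: a direct computation gives $K_{i\bv_i} = j\bigl(\mathds{1}(\bv_i=\br_i) - \mathds{1}(\bv_i=\br'_i)\bigr)$, so the uniform formula $K_{iw} = j(\mathds{1}(w=\br_i) - \mathds{1}(w=\br'_i))$ holds for every $w \in \A_i$. Substituting collapses each index-$i$ factor of $\phi(\eta)$ to $(c_{i\br_i}/c_{i\br'_i})^j\, x^{j(\br_i - \br'_i)}$, and multiplying over $i$ yields $\phi(\eta) = (c_\br/c_{\br'})^j\, x^{j\sum_i(\br_i-\br'_i)} = (a_\br/a_{\br'})^j$, which is the desired identity with $\gamma = 1 \in \QQ$.

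The main obstacle is ensuring that $K_{i\bv_i}$, which is \emph{not} directly constrained by the very general valuations hypothesis (since $\val(c_{i\bv_i} x^{\bv_i}) = 0$ is excluded from $\cS_\alpha$), nevertheless takes exactly the value needed to make the monomial collapse. This is precisely where the degree-zero hypothesis $\deg \xi' = \deg \xi$ enters: it furnishes the relation $\sum_w K_{iw} = 0$, pinning down $K_{i\bv_i}$ in just the right way.
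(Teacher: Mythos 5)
Your proof is correct and takes essentially the same approach as the paper's: you package the data as $\eta=\xi'/\xi$, decompose $\phi(\eta)$ in the $c_{iw}x^w$ basis via the $K_{iw}$, use very general valuations to identify the induced dependence on $\cS_\alpha$ with a rational multiple of the cancellative relation, deduce integrality of $j$ from a $\pm 1$ entry, and use the degree relation $\sum_w K_{iw}=0$ to extend the formula to $w=\bv_i$. One small slip: your conclusion $\gamma=1$ silently drops the rational coefficients that Laurent monomials in $R(\A)$ carry (per the paper's footnote convention), so the actual $\gamma$ is the ratio of the coefficients of $\xi'$ and $\xi$ — which is exactly why the lemma statement allows an arbitrary $\gamma\in\QQ$ and is needed downstream when comparing terms like $\Delta_J(\bw)y_\bw$ with distinct numerical coefficients.
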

    \begin{proof}
    	Applying $\phi$ we may write
    \begin{align*}
    	&\phi(\xi)=\lambda\cdot \prod_{\substack{i=1,\ldots,n-1\\w\in\A_i\setminus\{0\}}} (c_{iw}x^w)^{m_{iw}}\qquad \lambda\in \QQ,\ m_{iw}\in\ZZ;\\
    	&\phi(\xi')=\lambda'\cdot \prod_{\substack{i=1,\ldots,n-1\\w\in\A_i\setminus\{0\}}} (c_{iw}x^w)^{m_{iw}'}\qquad \lambda'\in \QQ,\ m_{iw}'\in\ZZ,
    \end{align*}
    noting that for each $i$,
    \begin{equation}\label{eqn:degrel}
    	\sum_{w\in \A_i\setminus\{0\}} m_{iw}=\sum_{w\in \A_i\setminus\{0\}} m_{iw}'.
    \end{equation}
    
    Since $\val(\phi(\xi'))=\val(\phi(\xi'))$ 
    we obtain 
    \[
    	\sum_{i,w} (m_{iw}-m'_{iw})\cdot \val(c_{iw}x^{w})=
    	\sum_{i,w\neq \bv_i} (m_{iw}-m'_{iw})\cdot \val(c_{iw}x^{w})=0
    \]
    On the other hand, since $\br,\br'$ is a cancellative pair we have the relation 
    \begin{equation}\label{eqn:primrel}
    		\sum_{i} \val(c_{i\br_i}x^{\br_i})-\val(c_{i\br_i'}x^{\br_i'})=0.
    	\end{equation}
    Omitting the terms of the form $\val(c_{i\bv_i}x^{\bv_i})$ from  \eqref{eqn:primrel}, we obtain a non-trivial relation among the elements of $S_\alpha$ (cf. Definition \ref{defn:genval}). 
    
    Since the $X_i$ have very general valuations, any integral relation among the elements of $S_\alpha$ must be an integral multiple of the relation coming from \eqref{eqn:primrel}.
    In particular, there exists $j\in \ZZ$ such that for any $i=1,\ldots,n-1$ and any $w\in \A_i\setminus \{0,\bv_i\}$,
    \[
    	m_{iw}-m_{iw}'=\begin{cases}
    		-j &w=\br_i\ \textrm{and}\ w\neq \br_i'\\
    		j &w\neq \br_i\ \textrm{and}\ w= \br_i'\\
    		0 &\textrm{else}
    	\end{cases}
    \]
    Using \eqref{eqn:degrel} we may extend the above to also include $w=\bv_i$ and conclude 
    that 
    \[
    \phi(\xi')=\gamma\cdot a_\br^ja_{\br'}^{-j}\cdot \phi(\xi)
    \]
    as desired.
    
    \end{proof}
    \begin{lemma}\label{lemma:cancel}
    	Assume that the $X_i$ have very general valuations. Let $\br,\br'$ be a cancellative pair for a point $\alpha$, and $I\subset \{0,\ldots,n\}$ a subset of size $2$ such that $\Delta_I(\br)\neq 0$.
    Consider 
    any homogeneous Laurent polynomial $f\in R(\A)$ of degree $d$, and denote by $\nu$ the minimal valuation of the monomials of $f$ at $\alpha$. 
    Then we may algorithmically construct
    \[
    g,h\in R(\A)
    \]
    such that 
    \begin{equation}
    	\phi(f)=\phi\big(h+(\Delta_I(\br)y_\br+\Delta_I(\br')y_{\br'})\cdot g\big)
    \end{equation}
    and
    \begin{enumerate}
    	\item\label{item:g} the monomials of $g$ all have degree $d-1$ and valuation $\nu-\nu_{\br}$ at $\alpha$; and
    	\item the monomials of $h$ all have degree $d$ and valuation at least $\nu$ at $\alpha$, with at most one term having valuation $\nu$. Furthermore, the valuations of the monomials of $h$ are among the valuations of the monomials of $f$.
    \end{enumerate}
    Furthermore, this construction does not depend on the coefficients $c_\bw$ but only their valuations $\val(c_\bw)$.
    \end{lemma}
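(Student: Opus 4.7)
The plan is to proceed by induction on the number of monomials of $f$ attaining the minimum valuation $\nu$. If at most one monomial of $f$ has valuation $\nu$, we simply take $h=f$ and $g=0$, and all stated properties are immediate. Otherwise, fix once and for all a total ordering on multi-indices (determined by the exponent data alone), and let $y_{\xi_0}$ be the smallest monomial of $f$ attaining valuation $\nu$.

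The key technical ingredient is a single identity in $R(\A)$. Setting $u = y_\br y_{\br'}^{-1}$ and $\lambda = -\Delta_I(\br')/\Delta_I(\br)$ (which is well-defined since $\Delta_I(\br)\neq 0$), one has
\[
u - \lambda \;=\; \frac{1}{\Delta_I(\br)\,y_{\br'}}\bigl(\Delta_I(\br)\,y_\br + \Delta_I(\br')\,y_{\br'}\bigr),
\]
so, factoring $u^{j}-\lambda^{j}$ as $(u-\lambda)\sum_{k}u^{k}\lambda^{j-1-k}$ (suitably adjusted for $j<0$), for any monomial $y_\eta \in R(\A)$ and any integer $j$ we obtain an explicit equality
\[
y_\br^{j}y_{\br'}^{-j}y_\eta \;-\; \lambda^{j}y_\eta \;=\; \bigl(\Delta_I(\br)y_\br+\Delta_I(\br')y_{\br'}\bigr)\cdot \widetilde g_{\eta,j},
\]
where $\widetilde g_{\eta,j}\in R(\A)$ is a sum of $|j|$ monomials each of degree $\deg y_\eta - 1$. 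Because $\br,\br'$ is a cancellative pair we have $\nu_\br=\nu_{\br'}$, so each of these monomials has valuation $\val(\phi(y_\eta)) - \nu_\br$ at $\alpha$.

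Now for every other monomial $c_\xi y_\xi$ of $f$ with $\val(\phi(y_\xi))=\nu$, Lemma \ref{lemma:mon} produces $\gamma_\xi\in\QQ$ and $j_\xi\in\ZZ$ with $\phi(y_\xi)=\gamma_\xi a_\br^{j_\xi}a_{\br'}^{-j_\xi}\phi(y_{\xi_0})$. Applying the identity above with $\eta=\xi_0$ and $j=j_\xi$ and then $\phi$ yields
\[
\phi(c_\xi y_\xi) \;=\; c_\xi\gamma_\xi\lambda^{j_\xi}\phi(y_{\xi_0}) \;+\; \phi\!\left(c_\xi\gamma_\xi\bigl(\Delta_I(\br)y_\br+\Delta_I(\br')y_{\br'}\bigr)\widetilde g_{\xi_0,j_\xi}\right).
\]
We update $h$ by removing $c_\xi y_\xi$ and adding $c_\xi\gamma_\xi\lambda^{j_\xi}$ to the coefficient of $y_{\xi_0}$, and update $g$ by adding $c_\xi\gamma_\xi\widetilde g_{\xi_0,j_\xi}$; the equation $\phi(f)=\phi\bigl(h+(\Delta_I(\br)y_\br+\Delta_I(\br')y_{\br'})g\bigr)$ is preserved at each step.

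After processing every monomial of $f$ of valuation $\nu$ other than $y_{\xi_0}$, $h$ has at most one term (namely a modified $y_{\xi_0}$, whose coefficient may be zero) of valuation $\nu$, and its remaining monomials are inherited from $f$, so their valuations all appear among those of $f$. The monomials of $g$ are sums of the $\widetilde g_{\xi_0,j_\xi}$, each of degree $d-1$ and valuation $\nu-\nu_\br$ by the identity, as required. The main subtlety I expect — and the point that must be checked carefully — is that the construction depends only on the valuations of the $c_\bw$: the quantities $\Delta_I(\br), \Delta_I(\br'), \lambda, j_\xi$ are all integer/rational data fixed by the exponents, and $\gamma_\xi$ is determined by the exponent structure together with the single integer relation among the elements of $\cS_\alpha$ guaranteed by the very-general-valuations hypothesis (inspecting the proof of Lemma \ref{lemma:mon} shows $\gamma_\xi$ is read off from this combinatorial data alone).
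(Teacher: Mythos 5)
Your proposal is correct, and it takes a recognizably different route from the paper's proof while reaching the same place. Both arguments use Lemma~\ref{lemma:mon} to compare the valuation-$\nu$ monomials of $f$ via their $\br$-exponent, push all but one of them into $(\Delta_I(\br)y_\br+\Delta_I(\br')y_{\br'})\cdot g$, and collapse what remains into a single term of valuation $\nu$. The paper does this \emph{incrementally}: it inducts on the spread $\delta$ of $\br$-exponents among the valuation-$\nu$ monomials, at each step subtracting $(\Delta_I(\br)y_\br+\Delta_I(\br')y_{\br'})g''$ to kill the top layer and shrink $\delta$ by one, and then only at the very end merges the now $\br$-exponent-aligned terms by a rational rescaling. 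You instead exploit the closed-form telescoping factorization $u^{j}-\lambda^{j}=(u-\lambda)\sum_k u^{k}\lambda^{j-1-k}$, with $u=y_\br y_{\br'}^{-1}$ and $\lambda=-\Delta_I(\br')/\Delta_I(\br)$, to shift each valuation-$\nu$ monomial all the way to a fixed reference $y_{\xi_0}$ in a single step; this replaces the induction on $\delta$ with one pass over the monomials and gives the $g$-contribution explicitly as a sum of the $\widetilde g_{\xi_0,j_\xi}$. Both versions correctly conclude that the construction depends only on exponents and on $\val(c_\bw)$, since the $\gamma_\xi$ from Lemma~\ref{lemma:mon} are quotients of the (rational) $R(\A)$-coefficients of $f$ and the $j_\xi$ come from the unique $\QQ$-linear dependence in $\cS_\alpha$. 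One point your phrase ``suitably adjusted for $j<0$'' does not actually cover: the hypotheses allow $\Delta_I(\br')=0$, in which case $\lambda=0$ and the geometric-sum identity makes no literal sense for negative $j_\xi$. This is easily repaired --- either choose $y_{\xi_0}$ among the valuation-$\nu$ monomials of $f$ to have \emph{minimal} $\br$-exponent, forcing $j_\xi\ge 0$ throughout, or note directly that when $\lambda=0$ every $\phi(y_\xi)$ with $j_\xi\neq 0$ equals $\phi(\Delta_I(\br)y_\br\cdot\widetilde g)$ for $\widetilde g=\Delta_I(\br)^{-1}y_\br^{j_\xi-1}y_{\br'}^{-j_\xi}y_{\xi_0}$, so it goes entirely into $g$ and contributes nothing to the coefficient of $y_{\xi_0}$. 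The paper's step-by-step decrement handles this degenerate case without any special argument, which is the main thing its more gradual approach buys.
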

    
    \begin{proof}
    	Fix an arbitrary monomial $\xi\in R(\A)$ of valuation $\nu$ at $\alpha$. For any monomial $\xi'$ of $f$ with valuation $\nu$, we may apply Lemma \ref{lemma:mon}.
    We refer to the exponent $j$ appearing in the expression for $\phi(\xi')$ in the lemma as the $\br$-exponent of $\xi'$.

    We first claim that there exist $g,h^* \in R(\A)$ fulfilling
    \[
    	f=h^*+(\Delta_I(\br)y_\br+\Delta_I(\br')y_{\br'})\cdot g
    \]
    such that $g$ satisfies item \ref{item:g} in the statement of the lemma, the monomials of $h^*$ all have degree $d$ and valuation at least $\nu$ at $\alpha$, the monomials of valuation $\nu$ all have the same $\br$-exponent, and the valuations of the monomials of $h^*$ are among the valuations of the monomials of $f$.
    
    To prove this, we induct on the difference $\delta$ between the maximal and minimal $\br$-exponents of the valuation-$\nu$ monomials of $f$. When $\delta=0$, the claim is trivial.
    
    For the induction step, 
    set 
    \[g''=\frac{\sum_\eta \eta}{\Delta_I(\br)y_\br} 
    \]
    where the sum is taken over all valuation-$\nu$ monomials $\eta$ of $f$ with maximal $\br$-exponent.
    Setting 
    \[
    f'=f-(\Delta_I(\br)y_\br+\Delta_I(\br')y_{\br'})g'',
    \]
    the difference between the maximal and minimal $\br$-exponents of the valuation-$\nu$ monomials of $f'$ is strictly smaller than $\delta$. By the induction hypothesis, there thus exist $g',h^*\in R(\A)$ fufilling 
    \[
    	f'=h^*+(\Delta_I(\br)y_\br+\Delta_I(\br')y_{\br'})\cdot g'
    \]
    along with the other conditions of the claim.
    We may then take $g=g'+g''$  to obtain the desired expression for $f$.
    
    Having proven the above claim, we now finish the proof of the lemma. If $h^*$ has no valuation-$\nu$ monomials, we may simply take $h=h^*$. Otherwise, let $\eta_0,\ldots,\eta_k$ be the valuation-$\nu$ monomials of $h^*$. Since they all have the same $\br$-exponent, there exist rational numbers $\lambda_1,\ldots,\lambda_k$ such that 
    \[
    \phi(\eta_i)=\lambda_i\cdot \phi(\eta_0)\qquad i= 1,\ldots,k.
    \]
    We then set 
    \[
    	h=h^*+\sum_{i=1}^k (\lambda_i \eta_0-\eta_i).
    \]
    It follows from construction that $g$ and $h$ have the desired properties and the claim of the lemma follows.
    \end{proof}

    We return to the problem of determining the exact value of $\val(z_J)$ given $\val(z_I)$ for cancellative indices $I$ and $J$. 
    \begin{alg}\label{alg:zalg}
    	Assume $X_1,\ldots,X_{n-1}$ have very general valuations.
    We will inductively construct sequences $(g_i)$, $(\zeta_i)$ of homogeneous elements of $R(\A)$ such that 
    \[
    z_J=\phi(\zeta_i)+\phi(g_i)z_I.
    \]
    We begin with
    \[
    	\zeta_0=\sum_{w\in\A} \Delta_J(\bw)y_\bw \qquad g_0=0.
    \]
    
    The algorithm \emph{terminates} if either 
    $\zeta_i$ has a unique monomial of minimal valuation, or the minimal valuation of a monomial of $\zeta_i$ is at least $\nu_J-\nu_I+\val(z_I)$.
    Otherwise, we apply Lemma \ref{lemma:cancel} to $f=\zeta_i$ with the cancellative pair $\br_I,\br_I'$, obtaining $g$ and $h$.
    We use these to define the next terms in the sequences:
    \begin{align*}
    	g_{i+1}&=g_i+g\\
    	\zeta_{i+1}&=h-g\cdot \sum_{\bw\in\A\setminus\{\br_I,\br_I'\}} \Delta_I(\bw)y_\bw.
    \end{align*}
    \end{alg}
    \begin{prop}\label{prop:algorithm}
    Assume $X_1,\ldots,X_{n-1}$ have very general valuations.
    Let $I$ and $J$ be cancellative indices such that \eqref{eqn:delta} is satisfied. Consider some $P\in X$ tropicalizing to $\alpha$ such that 
    $\val(z_I)>\nu_I$ at $P$.
    Then
    \begin{enumerate}
    	\item Algorithm \ref{alg:zalg} terminates;
    	\item For every $i$, we have 
    		\begin{equation}\label{eqn:zeta}
    z_J=\phi(\zeta_i)+\phi(g_i)z_I.
    		\end{equation}
    \end{enumerate}
    In particular, either the valuation $\val(z_J)$ of $z_J$ at $P$ is at least $\nu_J-\nu_I+\val(z_I)$, or there is some $i$ such that $\zeta_i$ has a unique monomial $\xi$ of minimal valuation at $\alpha$, and 
    \[
    \val(z_J)=\val(\xi).
    \]
    \end{prop}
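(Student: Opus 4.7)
The approach is to establish the invariant $z_J = \phi(\zeta_i) + \phi(g_i) z_I$ by induction on $i$, then verify termination, and finally read off the valuation statement. The base case $i = 0$ is immediate from \eqref{eqn:zJ} and $g_0 = 0$. For the inductive step I would expand $\phi(\zeta_{i+1}) + \phi(g_{i+1}) z_I$ using the defining formulas in Algorithm~\ref{alg:zalg}, substitute the identity $\phi(\sum_{\bw\in\A} \Delta_I(\bw) y_\bw) = z_I$ (from \eqref{eqn:zJ}) to trade the explicit sum involving $y_\bw$ for $z_I$ plus the two excluded terms, collect, and then apply the output relation $\phi(\zeta_i) = \phi(h) + \phi(g)\bigl(\Delta_I(\br_I) a_{\br_I} + \Delta_I(\br_I') a_{\br_I'}\bigr)$ from Lemma~\ref{lemma:cancel}. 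Straightforward cancellation then reduces the expression to $\phi(\zeta_i) + \phi(g_i) z_I$, which equals $z_J$ by induction.

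For termination, let $\nu^{(i)}$ denote the minimum valuation at $\alpha$ of the monomials of $\zeta_i$. By Lemma~\ref{lemma:cancel} the monomials of $h$ have valuations among those of $\zeta_i$, with at most one of valuation exactly $\nu^{(i)}$. The correction $-g \cdot \sum_{\bw \notin \{\br_I, \br_I'\}} \Delta_I(\bw) y_\bw$ contributes only monomials of valuation strictly greater than $\nu^{(i)}$, since the monomials of $g$ have valuation $\nu^{(i)} - \nu_I$ while each factor $y_\bw$ with $\bw \notin \{\br_I, \br_I'\}$ has valuation strictly greater than $\nu_I$ (the pair $\br_I, \br_I'$ being precisely the one achieving $\nu_I$). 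Hence at each iteration either the surviving monomial of valuation $\nu^{(i)}$ is unique in $\zeta_{i+1}$, triggering condition (a), or $\nu^{(i+1)} > \nu^{(i)}$ strictly. Because the valuations in question lie in the finitely generated subgroup of $\Gamma_{\val}$ generated by $\{\val(a_\bw) : \bw \in \A\}$, the strictly increasing sequence $\nu^{(i)}$ must exceed the threshold $\nu_J - \nu_I + \val(z_I)$ in finitely many steps, triggering condition (b).

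For the valuation conclusion, $g_i$ is a sum of $g$'s from previous iterations whose monomials have valuation at least $\nu_J - \nu_I$, so $\val(\phi(g_i) z_I) \geq \nu_J - \nu_I + \val(z_I)$. If termination occurs by condition (b), then likewise $\val(\phi(\zeta_i)) \geq \nu_J - \nu_I + \val(z_I)$, and the invariant yields $\val(z_J) \geq \nu_J - \nu_I + \val(z_I)$. If termination occurs by condition (a) with unique minimum monomial $\xi$ satisfying $\val(\xi) < \nu_J - \nu_I + \val(z_I)$, then $\phi(\xi)$ is the unique lowest-valuation term in $\phi(\zeta_i) + \phi(g_i) z_I$, giving $\val(z_J) = \val(\xi)$; if instead $\val(\xi) \geq \nu_J - \nu_I + \val(z_I)$, we are back in the first alternative. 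The main obstacle is making the termination argument rigorous---specifically, ensuring the strictly increasing sequence $\nu^{(i)}$ does not densely accumulate below the threshold---and it is the \emph{very general valuations} hypothesis, via the unique $\QQ$-linear relation among $S_\alpha$ and the consequent control afforded by Lemma~\ref{lemma:mon}, that restricts which valuations can appear and rules out such accumulation.
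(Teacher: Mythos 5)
Your overall structure matches the paper's proof exactly: establish the invariant $z_J = \phi(\zeta_i) + \phi(g_i)z_I$ by induction, argue termination, and read off the valuation conclusion via the bound $\val(\phi(g_i)) \geq \nu_J - \nu_I$. The inductive step and the valuation conclusion are sound and agree with the paper's argument.

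The termination argument, however, has a real gap, which you partially acknowledge but then misdiagnose. Your claim that a strictly increasing sequence lying in the finitely generated subgroup of $\Gamma_{\val}$ generated by $\{\val(a_{\bw})\}$ must exceed the threshold in finitely many steps is false: finitely generated subgroups of $\RR$ can be dense (e.g., $\ZZ + \ZZ\sqrt{2}$), so a strictly increasing sequence in such a subgroup can accumulate below any given bound. You then flag this as "the main obstacle" and gesture at the very general valuations hypothesis and Lemma~\ref{lemma:mon} as the fix — but that is not what closes the gap, and you do not actually carry out an argument. The paper's fix is more elementary and does not go through Lemma~\ref{lemma:mon} at all: set $\epsilon = \min\{\nu_{\bw} - \nu_{\bw'} : \bw,\bw' \in \A,\ \nu_{\bw} > \nu_{\bw'}\} > 0$ (a positive minimum since $\A$ is finite). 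Every new valuation introduced at step $i$ has the form $\nu_i + (\nu_{\bw} - \nu_I)$ with $\nu_{\bw} - \nu_I \geq \epsilon$, while the remaining valuations come from the finite set already present in $\zeta_i$; hence after finitely many steps the minimum must jump by at least $\epsilon$ or pass the threshold. This quantized lower bound on the increments is what rules out accumulation, and it is the ingredient your argument is missing.
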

    \begin{proof}
    	We first show by induction that \eqref{eqn:zeta} holds. For $i=0$ the claim is clear. For the induction step we have
    	\begin{align*}
    		\phi(\zeta_{i+1})+\phi(g_{i+1})z_I=
    		\phi(h)+\phi(g)\left(z_I- \sum_{\bw\in\A\setminus\{\br_I,\br_I'\}} \Delta_I(\bw)a_\bw\right)+\phi(g_i)z_I
    	\end{align*}
    	with $g,h$ as in Algorithm \ref{alg:zalg}.
    	By \eqref{eqn:zJ} we then have
    	\begin{align*}
    	\phi(\zeta_{i+1})+\phi(g_{i+1})z_I=
    		\phi(h)+\phi(g)\left(\sum_{\bw\in\{\br_I,\br_I'\}} \Delta_I(\bw)a_\bw\right)+\phi(g_i)z_I\\
    		=\phi(\zeta_i)+\phi(g_i)z_I=z_J.
    	\end{align*}
    
    	We next show that the algorithm terminates.
     Set
    	\[
    		\epsilon=\min\{\nu_\bw-\nu_{\bw'}\ | \bw,\bw'\in \A\ \textrm{and}\ \nu_\bw>\nu_{\bw'}\}.
    	\]
    	Let $\nu_i$ be the minimal valuation of a monomial of $\zeta_i$.
    	The valuation of any monomial of $\zeta_{i+1}$ is either among the valuations of the monomials of $\zeta_i$ (by Lemma \ref{lemma:cancel}) or is of the form $\nu_i+\nu_{\bw}-\nu_{I}$ for $\bw\in\A\setminus\{\br_I,\br_I'\}$ satisfying $\nu_{\bw}-\nu_{I}>0$.
    	Then since $\zeta_i$ only has finitely many monomials, after a finite number of steps $k$ we must have 
    	\[\nu_{i+k}\geq \min\{\nu_i+\epsilon,\nu_J-\nu_I+\val(z_I)\}.\]
    	It follows that the algorithm must terminate.

    By the construction of the $\zeta_i$ and $g_i$, one may readily show by induction on $i$ that $\val(\phi(g_i))\geq \nu_J-\nu_I$.
    	The remaining claims of them proposition then follow from the termination criterion, \eqref{eqn:zeta}, and the inequality 
    	\[
    \val(\phi(g_i) z_I)=\val(\phi(g_i))+ \val(z_I)\geq \nu_J-\nu_I+\val(z_I).
    	\]
    
    \end{proof}

    \begin{thm}\label{thm:alltangents}
	    Let $X\subset \PP^n$ be a curve satisfying the hypotheses of \S\ref{sec:setup} and Assumption \ref{ass:setup}. Assume further that $X_1,\ldots,X_{n-1}$ have very general valuations and sufficiently general lowest order parts. Then for any $\alpha\in \trop(X)$, we may determine all tropical tangents to $\alpha$ with Propositions \ref{prop:nocancel}, \ref{prop:cancel}, and \ref{prop:algorithm} using only the ``tropical'' data of $\{\A_i\}_{i=1}^{n-1}$ and $\{\val(c_{iw})\}_{w\in \A_i}$.
    \end{thm}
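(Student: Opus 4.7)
My plan is to reduce the computation of tropical tangents at $\alpha$ to the computation of the tuple of valuations $(\val(z_J))_J$, as $J$ ranges over size-$2$ subsets of $\{0,\ldots,n\}$, for points $P\in X$ with $\trop(P)=\alpha$. Indeed, by \eqref{eqn:pz}, $\val(q_J)$ (and hence the tropical Pl\"ucker coordinate $\beta_J$, up to the translation ambiguity coming from the choice of homogeneous coordinates for $P$) is determined by $\val(z_J)$ and $\alpha$. So the statement reduces to: the set of simultaneously realizable tuples $(\val(z_J))_J$ at $\alpha$ can be read off from the tropical data of the $\A_i$ and the valuations $\val(c_{iw})$.

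I would then proceed by a case analysis on $\alpha$ and on each index $J$. If $\alpha$ is not in the critical locus, Proposition \ref{prop:nocancel}(1) applies to every $J$ and forces $\val(z_J)=\nu_J(\alpha)$, which is a tropical quantity. Now suppose $\alpha$ is in the critical locus; by our very general valuations hypothesis (which implies non-colliding valuations by Lemma \ref{lem:vg}) together with sufficiently general lowest order parts, for each individual $J$ I run through cases (2)--(4) of Proposition \ref{prop:nocancel}: these determine $\val(z_J)=\nu_J(\alpha)$ whenever the minimizer in \eqref{eqn:min} fails to participate in a genuine cancellation, or (for a vertex) participates in one with matching $\Delta_J$ values, or (for an edge) lies in the $\bv_i$-span. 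The indices $J$ not falling under any of these cases are precisely the \emph{cancellative indices} of \S\ref{sec:simult}, and Proposition \ref{prop:cancel} tells us that for such $J$, taken in isolation, $\val(z_J)$ may equal any value $\geq \nu_J(\alpha)$; whether a particular value is realized is determined by an Osserman--Payne lifting argument whose hypotheses depend only on tropical data.

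The remaining — and only genuinely nontrivial — step is to determine which tuples of values on the cancellative indices are \emph{simultaneously} realizable. Proposition \ref{prop:delta} partitions the cancellative indices into classes according to the ratio $\Delta_J(\br_J)/\Delta_J(\br_J')$ and shows that at any given $P$, the indices $J$ with $\val(z_J)>\nu_J$ must lie in a single such class; this ratio depends only on the exponent data $\bv,\bv'$ and the cancellative pair $\br,\br'$, hence is tropical. Fixing one such class and one "witness" index $I$ with $\val(z_I)=\lambda>\nu_I$, Proposition \ref{prop:algorithm} together with Proposition \ref{prop:algorithm}'s termination statement gives a purely symbolic, algorithmic procedure operating on $R(\A)$ that outputs $\val(z_J)$ for every other index $J$ in the same class; this procedure uses only $\{\A_i\}$, $\{\val(c_{iw})\}$, and the matrices $\Delta_J(\bw)$, all of which are tropical. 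Composing these three ingredients — case analysis via Prop. \ref{prop:nocancel}, the existence statement of Prop. \ref{prop:cancel}, and the algorithmic relation of Prop. \ref{prop:algorithm} — produces all simultaneously realizable tuples $(\val(z_J))_J$, hence all tropical tangents.

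The principal obstacle I anticipate is checking that these three propositions genuinely cover every pair $(\alpha, J)$ under the very general / sufficiently general lowest order parts hypotheses; one must verify that the conditions of cases (2)--(4) of Prop. \ref{prop:nocancel} and the two cases of Prop. \ref{prop:cancel} together exhaust all configurations of cancellative pairs that can occur at a critical point. This is handled by the non-colliding valuations assumption, which ensures that each $\bw\in\A$ belongs to at most one cancellative pair and constrains their shape (so the dichotomy between "vertex versus edge" and "$\Delta_J$ matches versus not" is exhaustive). Once this exhaustiveness is noted, the theorem follows directly by assembling the three propositions as above.
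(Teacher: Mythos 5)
Your proposal follows essentially the same approach as the paper's proof: reduce to determining the simultaneously realizable tuples $(\val(z_J))_J$ via \eqref{eqn:pz}, settle non-cancellative indices with Proposition \ref{prop:nocancel}, use Proposition \ref{prop:delta} to identify which cancellative indices can simultaneously exceed $\nu_J$, and propagate a chosen $\val(z_I)$ to the remaining indices via Proposition \ref{prop:algorithm}.

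There is, however, one small but genuine gap in the last step. You assert that once a witness $I$ with $\val(z_I)=\lambda>\nu_I$ is fixed, Proposition \ref{prop:algorithm} ``outputs $\val(z_J)$ for every other index $J$ in the same class.'' That is not quite what the proposition gives. Its termination criterion is a disjunction: either some $\zeta_i$ has a unique monomial of minimal valuation (in which case $\val(z_J)$ is pinned down), or the minimal valuation of $\zeta_i$ has climbed to at least $\nu_J-\nu_I+\val(z_I)$, in which case one only learns the one-sided inequality $\val(z_J)-\nu_J\geq\val(z_I)-\nu_I$, not the value of $\val(z_J)$. The paper's proof explicitly handles this case by re-running the algorithm with the roles of $I$ and $J$ reversed, which either yields equality $\val(z_J)-\nu_J=\val(z_I)-\nu_I$ or determines $\val(z_I)$ from $\val(z_J)$. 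Without this role-reversal step your procedure does not actually determine the full tuple. A secondary, more minor omission: you do not address why the ``boundary'' tuple with $\val(z_J)=\nu_J$ for \emph{every} $J$ is simultaneously realizable; the paper notes that this requires a slight adaptation of the argument in Proposition \ref{prop:cancel}, since that proposition as stated treats one index at a time.
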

    \begin{proof}
    	To determine the tropical tangents, it suffices to determine the possible tuples $(\val(q_J))_{J}$
    	for $P\in X$ tropicalizing to $\alpha$.	By \eqref{eqn:pz}, it suffices to determine $(\val(z_J))_{J}$ instead.
    
    	Note that the quantities $\nu_J$ are determined by the tropical data mentioned in the statement of the theorem. If $J$ is an index set satisfying one of the conditions of Proposition \ref{prop:nocancel}, we have $\val(z_J)=\nu_J$. For any of the remaining indices $J$, by Proposition \ref{prop:cancel} it is possible to obtain $\val(z_J)=\nu_J$. In fact, slightly adapting the proof of  \ref{prop:cancel}. we see that it is possible to obtain $\val(z_J)=\nu_J$ simultaneously for all $J$. 
    
    	Suppose instead that $\val(z_I)>\nu_I$ for some index set $I$. Then Proposition \ref{prop:delta} tells us exactly for which index sets $J$ we have $\val(z_J)>\nu_J$. Fixing the value of $\val(z_I)$ (which by Proposition \ref{prop:cancel} may be any quantity larger than $\nu_I$), we may now apply Proposition \ref{prop:algorithm} to either determine $\val(z_J)$, or conclude that $\val(z_J)-\nu_J\geq \val(z_I)-\nu_I$. If we are in the latter case, we may apply Proposition \ref{prop:algorithm} with the roles of $I$ and $J$ reversed to either conclude that $\val(z_I)-\nu_I=\val(z_J)-\nu_J$, or determine the value of $\val(z_I)$ based on the value of $\val(z_J)$.
    	\end{proof}
   
	\begin{rem}\label{rem:GX}
		We may use Theorem \ref{thm:alltangents} to obtain a complete description of $\trop(\G(X))$. Indeed, the curve $X$ intersects the coordinate hyperplanes of $\PP^n$ in only finitely many points, so there are only finitely many points of $\trop(\G(X))$ that are not a tropical tangent for some $\alpha\in\trop(X)$. Hence, $\trop(\G(X))$ is the closure of all tropical tangents computed via Theorem \ref{thm:alltangents}.
	\end{rem}

    	\begin{ex}[Example \ref{ex:running} continued (a curve in $\PP^3$)]\label{ex:simult}
    It remains to compute the tropical tangents at $V_1$, $V_2$, $V_3$. 
    We first apply Proposition \ref{prop:delta} to compute that simultaneous cancellation is possible for the following collections of indices:
    	\vspace{.3cm}
    	\begin{center}	\begin{tabular}{l l }
    Vertex & Collections of Index Sets\\
    			\toprule
    $V_1$& $12,13,23$\\
    $V_2$& $02,03,23$\\
    $V_3$& $03,13$ and $02,12$\\
    			\bottomrule
    	\end{tabular}
    \end{center}

    We now apply Algorithm \ref{alg:zalg}
    We begin with $V_1$.
Here we have exponent vectors
\begin{align*}
	\bv_1=(2,0,1,-3),\bv_1'=(1,0,3,-4)\in\A_1\setminus\{0\}\\
	\bv_2=(0,1,1,-2),\bw=(1,-1,0,0)\in\A_2\setminus\{0\}.
\end{align*}
Taking  $I=\{2,3\}$ and  $J=\{1,3\}$, we have 
    \begin{align*}
	    \zeta_0=\underline{2y_{(\bv_1,\bv_2)}} +
	    \underline{1y_{(\bv_1',\bv_2)}} + 
	    (-2)y_{(\bv_1,\bw)}  
	    (-3)y_{(\bv_1',\bw)}
    \end{align*}
    and $\br_I=(\bv_1,\bv_2),\br'_I=(\bv_1',\bv_2)$.
    Terms of minimal valuation are underlined.
    Applying Lemma \ref{lemma:cancel} to $\zeta_0$, we obtain 
    \begin{align*}
	    g=1\qquad h=\underline{(-2)y_{(\bv_1,\bw)}}+\underline{(-3)y_{(\bv_1',\bw)}}\\
	    g_1=1\qquad \zeta_1=\underline{1y_{(\bv_1,\bw)}}+\underline{(-2)y_{(\bv_1',\bw)}}.
    \end{align*}
    Applying Lemma \ref{lemma:cancel} to $\zeta_1$, we obtain 
    \begin{align*}
	    g=\frac{y_{(\bv_1,\bw)}}{2y_{(\bv_1,\bv_2)}}\qquad h=\underline{(-5/2){y_{(\bv_1',\bw)}}}\\
	    g_2=\underline{1}+\frac{y_{(\bv_1,\bw)}}{2y_{(\bv_1,\bv_2)}}
\qquad
\zeta_2=\underline{(-5/2){y_{(\bv_1',\bw)}}}+\HOT.
    \end{align*}
Since 
\[
	z_{13}=\phi(\zeta_2)+\phi(g_2)z_{23}=\phi({(-5/2){y_{(\bv_1',\bw)}}}+\HOT)+\phi\left(1+\frac{y_{(\bv_1,\bw)}}{2y_{(\bv_1,\bv_2)}}\right)z_{23}
\]
and the valuation of ${(-5/2){y_{(\bv_1',\bw)}}}$ is three, we conclude
that
    the minimum of $\val(z_{23}),\val(z_{13}), 3$ is obtained at least twice.
    Similar computations show that the minimum of $\val(z_{23}),\val(z_{12}), 3$ and $\val(z_{13}),\val(z_{12}), 3$ are both obtained twice.

    We next consider $V_2$ where we have
exponent vectors
\begin{align*}
	\bv_1=(-1,0,2,-1),\bw=(-2,0,-1,3)\in\A_1\setminus\{0\}\\
	\bv_2=(0,-1,-1,2),\bv_2'=(1,-2,-1,2)\in\A_2\setminus\{0\}.
\end{align*}
Take $I=\{2,3\}$, $J=\{0,3\}$. Then 
 \begin{align*}
	    \zeta_0=\underline{2y_{(\bv_1,\bv_2)}} +
	    \underline{4y_{(\bv_1,\bv_2')}} + 
	    (-1)y_{(\bw,\bv_2)}+  
	    (-2)y_{(\bw,\bv_2')}
    \end{align*}
    and $\br_I=(\bv_1,\bv_2),\br'_I=(\bv_1,\bv_2')$.
 Applying Lemma \ref{lemma:cancel} to $\zeta_0$, we obtain 
    \begin{align*}
	    g=2\qquad h=\underline{(-1)y_{(\bw,\bv_2)}}+\underline{(-2)y_{(\bw,\bv_2')}}\\
	    g_1=2\qquad \zeta_1=\underline{(-5)y_{(\bw,\bv_2)}}+\underline{(-10)y_{(\bw,\bv_2')}}.
    \end{align*}
Applying Lemma \ref{lemma:cancel} to $\zeta_1$, we obtain 
    \begin{align*}
	    g=\frac{-5y_{(\bw,\bv_2)}}{y_{(\bv_1,\bv_2)}}\qquad h=0\\
	    g_2=2+\frac{-5y_{(\bw,\bv_2)}}{y_{(\bv_1,\bv_2)}}
	    \qquad \zeta_2=\frac{-5y_{(\bw,\bv_2)}}{y_{(\bv_1,\bv_2)}}\left(\underline{2y_{(\bw,\bv_2)}}+\underline{4y_{(\bw,\bv_2')}}\right)
    \end{align*}
    Unlike in the case of $V_1$, there are still two terms of minimal valuation in $\zeta_2$.  In fact, we may continue this process indefinitely, since after applying $\phi$, the two terms appearing in $\zeta_i$ will always be a multiple of $\phi(\Delta_I(\br_I)y_{\br_I}+\Delta_I(\br_I')y_{\br_I'})$. We thus obtain $\val(z_{03})=\val(z_{23})$, and a similar computation shows $\val(z_{02})=\val(z_{23})$.

    We finally consider $V_3$. 
    Computations similar to those for $V_1$ show that the minimum of $\val(z_{13}),\val(z_{03}), 3$ and of $\val(z_{12}),\val(z_{02}), 3$ are both obtained twice.
    
    We summarize the  tropical tangents of $V_1,V_2,V_3$ in Table \ref{table:vertices}. Combined with Table \ref{table:edges}, this gives a description of $\trop(\G(X))$. This tropical curve has $6$ vertices, $5$ bounded edges, and $20$ unbounded edges. A random projection to the plane is pictured in Figure \ref{fig:gauss}.
    \end{ex}
    
    	\begin{table}	\begin{tabular}{l l l l l l l}
    			 &  $\val(q_{23})$& $\val(q_{13})$ &$\val(q_{12})$& $\val(q_{03})$ &$\val(q_{02})$&$\val(q_{01})$\\
    			\toprule
    			$V_1$& $s_{23}$ & $s_{13}$& $s_{12}$ & $s_{03}$ & $s_{02}$ & $s_{01}$\\ \addlinespace
    			\multicolumn{7}{l}{$s_{ij} \geq 0$, all but one of $s_{23},s_{03},s_{02},s_{01}$ vanish,}\\
    			\multicolumn{7}{l}{and $s_{23}=s_{13}=s_{12}\leq 3$ or $s_{23},s_{13},s_{12}\geq 3$ with at least two equal.}\\
    \midrule
    \\
    			 &  $\val(q_{23})$& $\val(q_{13})$ &$\val(q_{12})$& $\val(q_{03})$ &$\val(q_{02})$&$\val(q_{01})$\\
    			\toprule
    			$V_2$& $s_{23}-3$ & $s_{13}-1$& $s_{12}-2$ & $s_{23}-4$ & $s_{23}-5$ & $-3$\\ \addlinespace
    			\multicolumn{7}{l}{$s_{ij} \geq 0$, all but one of $s_{23},s_{13},s_{12}$ vanish.}\\
    \midrule
    \\
    
    			 &  $\val(q_{23})$& $\val(q_{13})$ &$\val(q_{12})$& $\val(q_{03})$ &$\val(q_{02})$&$\val(q_{01})$\\
    			\toprule
    			$V_3$& $s_{23}-3$ & $s_{13}$& $s_{12}$ & $s_{03}-3$ & $s_{02}-3$ & $s_{01}+3$\\ \addlinespace
    			\multicolumn{7}{l}{$s_{ij} \geq 0$, all but one of $s_{23},s_{13},s_{12},s_{01}$ vanish,}\\
    			\multicolumn{7}{l}{and $s_{13}=s_{03}\leq 3$ or $s_{13},s_{03}\geq 3$ with at least one equal,}\\
    			\multicolumn{7}{l}{and $s_{12}=s_{02}\leq 3$ or $s_{12},s_{02}\geq 3$ with at least one equal.}\\
    
    			\bottomrule
    	\end{tabular}
    	\vspace{.5cm}
    
    	\caption{Tropical tangents for vertices in Example \ref{ex:simult}}\label{table:vertices}
    \end{table}
    
    \begin{figure}
    	\begin{tikzpicture}[scale=.35]
    \draw[fill] (0,0) circle [radius=0.2];
    \draw[fill] (-2.13185,1.73585) circle [radius=0.2];
    \draw[fill] (6.46031,2.88212) circle [radius=0.2];
    \draw[fill] (-7.57524,-1.42961) circle [radius=0.2];
    \draw[fill] (10.3577,6.4227) circle [radius=0.2];
    \draw[fill] (8.75494,8.06678) circle [radius=0.2];
    \draw (0,0) -- (-8.36171,-5.85439);
    \draw (0,0) -- (10.4936,4.11854);
    \draw (0,0) -- (-2.13185,1.73585);
    \draw (-2.13185,1.73585) -- (-9.2694,1.0895);
    \draw (-2.13185,1.73585) -- (6.46031,2.88212);
    \draw (6.46031,2.88212) -- (.252045,-2.01156);
    \draw (6.46031,2.88212) -- (21.2607,8.92207);
    \draw (0,0) -- (-7.57524,-1.42961);
    \draw (6.46031,2.88212) -- (10.3577,6.4227);
    \draw (6.46031,2.88212) -- (8.75494,8.06678);
    \draw (0,0) -- (1.36328,1.46338);
    \draw (0,0) -- (1.94388,.470475);
    \draw (0,0) -- (1.74301,-.98078);
    \draw (-2.13185,1.73585) -- (-1.96537,3.72891);
    \draw (-2.13185,1.73585) -- (-1.47746,3.62577);
    \draw (6.46031,2.88212) -- (.589291,-1.95393);
    \draw (6.46031,2.88212) -- (8.20332,1.90134);
    \draw (-7.57524,-1.42961) -- (-13.4463,-6.26566);
    \draw (-7.57524,-1.42961) -- (-7.40877,.56345);
    \draw (-7.57524,-1.42961) -- (-6.92085,.460304);
    \draw (-2.13185,1.73585) -- (-3.41378,.284756);
    \draw (10.3577,6.4227) -- (11.0121,8.31262);
    \draw (10.3577,6.4227) -- (12.3016,6.89318);
    \draw (8.75494,8.06678) -- (8.92141,10.0598);
    \draw (8.75494,8.06678) -- (10.1182,9.53015);
    
    	\end{tikzpicture}
    	\caption{A random projection of $\trop(\G(X))$ from Example \ref{ex:simult}}.\label{fig:gauss}
    \end{figure}
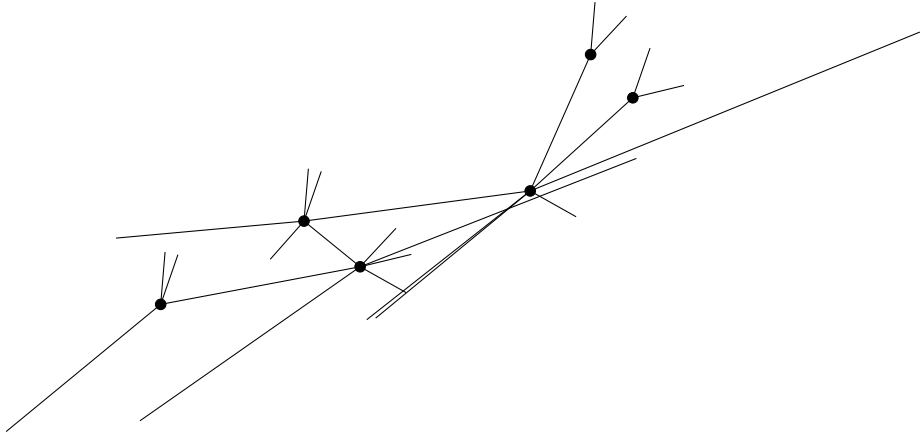

    \section{Combinatorial Conditions}\label{sec:comb}
    \subsection{Vanishing determinants}
    From Equation \ref{eqn:min} and Propositions \ref{prop:nocancel} and \ref{prop:cancel}, we have seen that it is important to determine when $\Delta_J(\bw)=0$ for some $\bw\in\A$. Of primary interest are the cases $\bw=\bv$, and $\bw=\bv'$ (in the case of a vertex).
    
    \begin{lemma}\label{lemma:edge}
    	Suppose $\alpha \in\trop(X)$ is in the relative interior of an edge $E$ and consider $J\subset \{0,\ldots, n\}$ with $|J|=2$.  Then $\Delta_J(\bv)=0$ if and only if $\langle E\rangle \subseteq \langle \overline J\rangle$.
    \end{lemma}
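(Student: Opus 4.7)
The plan is to identify $\langle E\rangle$ explicitly as the orthogonal complement of $\vspan\{\bv_1,\ldots,\bv_{n-1}\}$ in $\TP$, and then recognize the vanishing of $\Delta_J(\bv)$ as a statement about nontrivial intersection of this orthogonal complement with $\langle \overline J\rangle$.

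First I would pin down $\langle E\rangle$. Since $\alpha$ lies in the relative interior of $E$ and, by Assumption \ref{ass:setup}, each $f_i$ has exactly the two lowest-order terms $1$ and $c_{i\bv_i}x^{\bv_i}$ along the interior of $E$, every $\alpha'\in E$ satisfies $\alpha'\cdot \bv_i = -\val(c_{i\bv_i})$. Subtracting two such equations shows that every direction vector of $E$ pairs trivially with each $\bv_i$, so $\langle E\rangle\subseteq \{u\in\TP\mid \bv_i\cdot u=0\text{ for all }i\}$. Because $\trop(X)$ is a tropical complete intersection, $\dim\langle E\rangle=1$ and the $\bv_i$ are linearly independent as linear functionals on $\TP$, so their common zero locus in $\TP$ is already $1$-dimensional. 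Hence
\[
\langle E\rangle \;=\; \{u\in\TP\mid \bv_i\cdot u=0 \text{ for all } i=1,\ldots,n-1\}.
\]

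Next I would translate the determinantal condition. By definition, $\Delta_J(\bv)$ is the determinant of the matrix whose rows are the restrictions $(\bv_i)_{j\in\overline J}$, so $\Delta_J(\bv)=0$ if and only if there is a nonzero $u\in\RR^{n+1}$ supported on $\overline J$ (i.e.\ $u_j=0$ for $j\in J$) with $\bv_i\cdot u=0$ for every $i$. Because $|\overline J|=n-1$, the subspace $\vspan\{\varepsilon_j\mid j\in\overline J\}\subseteq\RR^{n+1}$ maps isomorphically onto $\langle\overline J\rangle\subseteq\TP$ under the projection defining $\TP$, and since each $\bv_i\in M$ has coordinate sum zero the functional $u\mapsto \bv_i\cdot u$ is well-defined on $\TP$. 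Thus $\Delta_J(\bv)=0$ is equivalent to the existence of a nonzero element in $\langle\overline J\rangle\cap\langle E\rangle$, and as $\langle E\rangle$ is $1$-dimensional this is equivalent to $\langle E\rangle\subseteq\langle\overline J\rangle$.

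There is no real obstacle beyond bookkeeping the three ambient spaces ($\RR^{n+1}$, $M_\RR$ and $\TP$) and confirming the two dimension counts (that $\langle E\rangle$ is already $1$-dimensional, and that the projection $\vspan\{\varepsilon_j\mid j\in\overline J\}\to\langle\overline J\rangle$ is an isomorphism), both of which follow immediately from $|\overline J|=n-1$ and the tropical complete intersection hypothesis.
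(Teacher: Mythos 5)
Your proposal is correct and follows essentially the same route as the paper's proof: both arguments identify $\langle E\rangle$ with the (image in $\TP$ of the) kernel of the matrix $\bv$ and then read off $\Delta_J(\bv)=0$ as the existence of a nonzero kernel vector supported on $\overline J$. You simply spell out the two facts the paper states without comment — that $\langle E\rangle$ is cut out by the $\bv_i$ (via the dimension count from the tropical complete intersection hypothesis) and that $\vspan\{\varepsilon_j \mid j\in\overline J\}$ injects into $\TP$.
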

    \begin{proof}
    $\Delta_J(\bv)=0$ if and only if  $\bv$ becomes singular after removing the columns in $J$, that is, the kernel of $\bv$ contains a non-trivial element whose $J$-coordinates are zero. On the other hand $\langle E \rangle$ is the image of the kernel of $\bv$ in $\TP$. The claim follows.
    \end{proof}
    
    Suppose instead that $\alpha\in\trop(X)$ is a vertex.
    Let $\Lambda$ be the affine tangent space to $\trop(X)$ at $\alpha$, that is, the affine span of a neighborhood of $\alpha$ in $\trop(X)$.
    
    \begin{lemma}\label{lemma:vert}
    	Let $\alpha \in\trop(X)$ be a vertex and consider $J\subset \{0,\ldots, n\}$ with $|J|=2$. \begin{enumerate}
    		\item $\Delta_J(\bv)$, $\Delta_J(\bv')$, or $\Delta_J(\bv)-\Delta_J(\bv')$ vanishes  if and only if $\langle E\rangle \subseteq \langle \overline J\rangle$ for some edge $E$ adjacent to $\alpha$.
    		\item $\Delta_J(\bv)=\Delta_J(\bv')=0$ if and only if $\langle \Lambda\rangle \subseteq \langle \overline J \rangle$. 
    	\end{enumerate}
    \end{lemma}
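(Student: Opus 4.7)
The plan is to reduce Lemma \ref{lemma:vert} to the already-proved edge case, Lemma \ref{lemma:edge}, by identifying the three edges at the vertex with the three choices $\Delta_J(\bv)$, $\Delta_J(\bv')$, and $\Delta_J(\bv)-\Delta_J(\bv')$ via multilinearity of the determinant.

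First, I would recall that $X$ is trivalent (by Assumption \ref{ass:setup} and the remark following it), so $\alpha$ has exactly three adjacent edges $E_1, E_2, E_3$. At the vertex, the polynomial $f_{i_0}$ has three lowest-order terms (namely $1$, $c_{i_0\bv_{i_0}}x^{\bv_{i_0}}$, and $c_{i_0\bv_{i_0}'}x^{\bv_{i_0}'}$), whereas every other $f_i$ has only two. Moving from $\alpha$ into the relative interior of an adjacent edge amounts to selecting which two of the three terms of $f_{i_0}$ remain of (jointly) minimal valuation; this gives the identification of the three edges with the three 2-subsets of $\{1, c_{i_0\bv_{i_0}}x^{\bv_{i_0}}, c_{i_0\bv_{i_0}'}x^{\bv_{i_0}'}\}$.

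Next, for each edge $E_k$ I would write down the corresponding exponent tuple $\bv^{(E_k)}$ in the notation of \S\ref{sec:notation}, keeping track of the monomial rescalings dictated by our convention that $1$ must appear among the minimal-valuation terms. In all three cases the rows indexed by $i\neq i_0$ equal $\bv_i$; row $i_0$ is $\bv_{i_0}$, $\bv_{i_0}'$, or (after rescaling $f_{i_0}$ by $x^{-\bv_{i_0}}$ in the case where the pair of retained terms is $c_{i_0\bv_{i_0}}x^{\bv_{i_0}}, c_{i_0\bv_{i_0}'}x^{\bv_{i_0}'}$) $\bv_{i_0}'-\bv_{i_0}$, respectively. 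Since $\Delta_J$ is linear in row $i_0$, this yields
\[
\Delta_J(\bv^{(E_1)})=\Delta_J(\bv),\qquad
\Delta_J(\bv^{(E_2)})=\Delta_J(\bv'),\qquad
\Delta_J(\bv^{(E_3)})=\Delta_J(\bv')-\Delta_J(\bv).
\]
Applying Lemma \ref{lemma:edge} to each edge, part (1) follows immediately: $\Delta_J(\bv)$, $\Delta_J(\bv')$, or $\Delta_J(\bv)-\Delta_J(\bv')$ vanishes if and only if one of $\langle E_1\rangle$, $\langle E_2\rangle$, $\langle E_3\rangle$ is contained in $\langle\overline J\rangle$.

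For part (2), note that $\langle\Lambda\rangle=\langle E_1\rangle+\langle E_2\rangle+\langle E_3\rangle$, so $\langle\Lambda\rangle\subseteq\langle\overline J\rangle$ is equivalent to $\langle E_k\rangle\subseteq\langle\overline J\rangle$ for all three $k$, which by the preceding step is equivalent to the vanishing of all three quantities $\Delta_J(\bv), \Delta_J(\bv'), \Delta_J(\bv)-\Delta_J(\bv')$. As these three quantities are linearly related, the simultaneous vanishing of all three is the same as $\Delta_J(\bv)=\Delta_J(\bv')=0$. The only real content of the argument is the multilinearity identification in the middle step; the rest is bookkeeping plus invocation of Lemma \ref{lemma:edge}, so I expect no genuine obstacle beyond carefully tracking the monomial rescaling that puts each edge into our standard normal form.
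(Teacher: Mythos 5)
Your proof is correct and follows essentially the same route as the paper's (very terse) argument: apply Lemma \ref{lemma:edge} to each of the three edges adjacent to the trivalent vertex $\alpha$, and use multilinearity of the determinant in the $i_0$-th row to identify the third edge's normalized exponent tuple with (up to sign) the matrix obtained from $\bv$ by replacing row $i_0$ with $\bv_{i_0}-\bv_{i_0}'$, which is what the paper abbreviates as $\Delta_J(\bv)-\Delta_J(\bv')=\Delta_J(\bv-\bv')$. You have simply made explicit the bookkeeping (trivalence, normalization by a monomial, which rows change) that the paper leaves implicit.
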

    \begin{proof}
    	The claims follow by applying Lemma \ref{lemma:edge} to the edges adjacent to $\alpha$, and noting that $\Delta_J(\bv)-\Delta_J(\bv')=\Delta_J(\bv-\bv')$.
    \end{proof}
    
    The second condition of Lemma \ref{lemma:vert} holds if and only if $\nu_J>0$. On the other hand, assuming that the second condition does not hold, by Propositions \ref{prop:nocancel} and \ref{prop:cancel} the first condition holds if and only if $\val(z_J)$ is necessarily zero. 
    
    \begin{ex}[Example \ref{ex:runningP} continued (a curve in $\PP^2$)]\label{ex:combP1}
    	We may use Lemma \ref{lemma:edge} in conjunction with Proposition \ref{prop:nocancel} to see that for $\alpha$ in the relative interior of $E_+$, $E_-$, or $E'$, $\val(z_J)>0$ 
	if and only if $\alpha\in E_+$ and $J=\{1,2\}$, or $\alpha\in E'$ and $J=\{0,1\}$. Indeed, $\langle E_+\rangle =\langle e_0\rangle$ and $\langle E'\rangle =\langle e_2\rangle$.

	We now consider the vertex $\alpha=V$ and apply Lemma \ref{lemma:vert} with $J=\{1,2\}$ or $J=\{0,1\}$.
	It follows from Proposition \ref{prop:nocancel} that $\val(z_{01})=\val(z_{12})=0$.
    \end{ex}
    
    \begin{ex}[Example \ref{ex:running} continued (a curve in $\PP^3$)]\label{ex:comb1}
    	We may use Lemma \ref{lemma:edge} in conjunction with Proposition \ref{prop:nocancel} to see that $\val(z_J)=0$ for all $\alpha$ in the relative interior of $E_1$, $E_3$, $E_4$, and $E_5$. On the other hand, since $\langle E_2\rangle$, $\langle E_6 \rangle$, and $\langle E_7 \rangle$ are all contained in $\langle e_2,e_3\rangle$, we see that for points $\alpha$ in these edges, $\val(z_{01})>0$.
    
    	We now consider the vertex $\alpha=V_2$ and apply Lemma \ref{lemma:vert} with $J=\{0,1\}$. Since the edge $E_2$ satisfies $\langle E_2 \rangle \subseteq \langle \overline J\rangle$ and the affine tangent space $\Lambda$ does not satisfy $\langle \Lambda \rangle \subseteq \langle \overline J\rangle$, we see that $\val(z_{01})=0$.
    \end{ex}
    
    \subsection{Simultaneous cancellation}
    The goal of this section is to give a more combinatorial interpretation of the conditions of Proposition~\ref{prop:delta}.  In what follows, we use $\mathds{1}$ to denote the all $1$s vector. 
    
    %Recall that for a matrix $A$ and a collection of indices $J$, we denote by $\Delta_J(A)$ the determinant of the matrix obtained from $A$ by removing the columns indexed by $J$.  We recall a useful relation between alternating sums of determinants of matrices and their kernels. 
    
    For a pair of $(n-1)\times(n+1)$ matrices  $A$ and $A'$, we set 
    \[\Theta_{ij,k\ell} (A,A') := \Delta_{ij}(A)\Delta_{k\ell}(A') - \Delta_{k\ell}(A)\Delta_{ij}(A').\]
    We omit the matrices when they are known from context, and simply write $\Theta_{ij,kl}$. 
    Note that $\Theta_{ij,k\ell} (A,A')$ is not symmetric in the indices, namely $\Theta_{ij,k\ell} (A,A') = - \Theta_{k\ell, ij} (A,A')$.
    
    \begin{lemma}\label{lemma:sc}
	    Consider full rank $(n-1)\times (n+1)$ matrices $A$ and $A'$ whose kernels contain $\mathds{1}$. We let $F$ and $F'$ be the images of $\row(A)^{\perp}$ and $\row(A')^{\perp}$ in $\TP=\RR^{n+1}/\mathds{1}$.
    \begin{enumerate}
    	\item If $\dim\langle F,F'\rangle=1$ then $\Theta_{ij,k\ell} = 0$ for any indices $i,j,k,\ell$.
    
    \item  If $\dim\langle F,F'\rangle=2$ and $I=\{i,j\}, J=\{k,j\}$ for distinct indices $i,j,k$, then  $\Theta_{ij,kj}(A,A') = 0$ if and only if 
    \[
    	\langle F,F'\rangle \cap \langle \overline{\{i,j,k\}}\rangle\neq \{0\}.
    \]
    
    \item If $\dim\langle F,F'\rangle=2$ and $I=\{i,j\}, J=\{k,\ell\}$ for distinct indices $i,j,k,\ell$, then $\Theta_{ij,k\ell}(A,A') = 0$ if and only if 
    \[
    \langle F,F'\rangle\cap \langle e_i+e_j,e_k+e_\ell, e_m|\ m\neq i,j,k,\ell\rangle\neq \{0\}.
    \]
    \end{enumerate}
    \end{lemma}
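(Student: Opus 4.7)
The plan is to reduce everything to a calculation on the kernel $2$-planes $V := \ker A$ and $V' := \ker A'$, and then recognize the resulting algebraic condition as the intersection statement in the lemma.

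First, I would invoke Plücker duality. Since $A$ has full rank, its row space is the annihilator in $\RR^{n+1}$ of the $2$-plane $V$, and a straightforward identification (essentially \eqref{eqn:dualp}) gives $\Delta_{ij}(A) = \pm p_{ij}(V)$, where $p_{ij}$ denotes the Plücker coordinate of $V$; the same holds for $A'$ and $V'$. Because the lemma only concerns the vanishing of $\Theta_{ij,k\ell}$, the signs can be ignored throughout. Next, the hypothesis $\mathds{1} \in V \cap V'$ lets me write $V = \langle \mathds{1}, v \rangle$ and $V' = \langle \mathds{1}, v'\rangle$ for some $v, v' \in \RR^{n+1}$, so that $F$ and $F'$ are the $1$-dimensional subspaces of $\TP$ spanned by the classes of $v$ and $v'$. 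Computing Plücker coordinates from the matrix with rows $\mathds{1}$ and $v$ yields $p_{ij}(V) = v_j - v_i$ (up to sign), and hence
\[
\Theta_{ij, k\ell}(A, A') = \pm\bigl((v_j - v_i)(v'_\ell - v'_k) - (v_\ell - v_k)(v'_j - v'_i)\bigr).
\]

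With this formula in hand, case (1) is immediate: $\dim\langle F, F'\rangle = 1$ forces $F = F'$, and since $V, V'$ are $2$-planes containing $\mathds{1}$ with the same $1$-dimensional image in $\TP$, they coincide, so their Plücker vectors are proportional and every $\Theta_{ij,k\ell}$ vanishes. For case (2), substituting $\ell = j$ and rearranging gives
\[
\Theta_{ij, kj} = \pm \det\begin{pmatrix} 1 & 1 & 1 \\ v_i & v_j & v_k \\ v'_i & v'_j & v'_k \end{pmatrix}.
\]
This determinant vanishes iff there exist scalars $(\alpha, \beta, \gamma)$ with $(\beta, \gamma) \neq (0,0)$ such that $w := \alpha \mathds{1} + \beta v + \gamma v'$ satisfies $w_i = w_j = w_k = 0$. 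Such a $w$ lies in $V + V'$ and its image $\bar w \in \TP$ is nonzero (since $\bar v, \bar v'$ are linearly independent in $\TP$ by the hypothesis $\dim\langle F,F'\rangle = 2$) and lies in $\langle \overline{\{i,j,k\}}\rangle$. This proves the equivalence with $\langle F,F'\rangle \cap \langle \overline{\{i,j,k\}}\rangle \neq \{0\}$.

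For case (3), with $i, j, k, \ell$ distinct, the displayed formula is literally a $2 \times 2$ determinant, whose vanishing is equivalent to the existence of $(\beta, \gamma) \neq (0, 0)$ with $\beta(v_j - v_i) + \gamma(v'_j - v'_i) = 0$ and $\beta(v_\ell - v_k) + \gamma(v'_\ell - v'_k) = 0$. Setting $w := \beta v + \gamma v'$, these are exactly the conditions $w_i = w_j$ and $w_k = w_\ell$, which translate into $\bar w \in \langle e_i + e_j,\ e_k + e_\ell,\ e_m : m \notin \{i,j,k,\ell\}\rangle$; and $\bar w \neq 0$ again by independence of $\bar v, \bar v'$. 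Reversing these steps gives the converse. The only minor bookkeeping step is the initial identification $\Delta_J(A) = \pm p_J(V)$, but since only vanishing matters the signs are irrelevant, so this does not pose a genuine obstacle.
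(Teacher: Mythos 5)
Your proof is correct, and it takes a genuinely different and more elementary route than the paper's. The paper computes $\Theta_{ij,k\ell}$ via the Hodge $*$-operator: it forms augmented matrices $B,B'$, expands $(\bigwedge B)^* \wedge (\bigwedge B')^* \wedge ((e_i-e_j)\wedge(e_k-e_\ell))^*$ by Laplace expansion, and simplifies using the relation coming from $\mathds{1}\in\ker A$. You instead dualize at the outset: identify $\Delta_{ij}(A)$ with the Pl\"ucker coordinate $p_{ij}(\ker A)$ (up to a sign $(-1)^{\sigma(i,j)}$ that is the same for $A$ and $A'$ and hence cancels from the factored form of $\Theta$), write $\ker A = \langle\mathds{1},v\rangle$ to get $p_{ij}(\ker A) = v_j - v_i$, and observe that $\Theta_{ij,kj}$ is a $3\times 3$ determinant while $\Theta_{ij,k\ell}$ with $\ell\notin\{i,j,k\}$ is a $2\times 2$ determinant. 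From there the vanishing condition becomes a transparent linear-dependence statement about $w=\alpha\mathds{1}+\beta v+\gamma v'$, and the nondegeneracy of $\bar w$ in $\TP$ follows from $\dim\langle F,F'\rangle=2$. The two proofs buy slightly different things: the paper's Hodge-star framing emphasizes the geometric reading (the perpendicular spaces $\row(B)^\perp$, $\row(B')^\perp$, $V$ spanning $\RR^{n+1}$), whereas your version is shorter, avoids the sign bookkeeping in the wedge expansion, and makes the two cases fall out of the same $2\times 2$/$3\times 3$ determinant computation. Both are complete and rigorous; yours is arguably the cleaner exposition.
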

    
    \begin{proof} 
    If $\dim\langle F,F'\rangle=1$ then $\row(A)=\row(A')$. Since row operations do not affect the vanishing of $\Theta_{ij,k\ell}$, we may assume that $A=A'$. But then clearly $\Theta_{ij,k\ell}=0$. 
    
    We now assume that $\dim\langle F,F'\rangle=2$. Fix indices $i,j,k,\ell$ with $i,j,k$ necessarily distinct, and either $\ell=j$ or $\ell$ distinct. Let
    \[
	    V=\langle e_i-e_j,e_k-e_\ell\rangle^\perp \subseteq \RR^{n+1}.
    \]
If $\ell=j$, the image of $V$ in $\TT$ is $\langle \overline{\{i,j,k\}}\rangle$; if $\ell\neq j$, then the image of $V$ in $\TT$ is $\langle e_i+e_j,e_k+e_\ell, e_m|\ m\neq i,j,k,\ell\rangle$.

Let $B$ and $B'$ be the matrices obtained from $A$ and $A'$ by adding the row $e_i$ at the top. 
Then $(\row B)^\perp$ is one-dimensional, does not contain $\mathds{1}$, and its image in $\TT^n$ is $F$. A similar statement holds for $B'$ and $F'$. Since $V$ does contain $\mathds{1}$, it follows that the space $\langle F,F'\rangle$ and the image of $V$ in $\TT^n$ have non-trivial intersection if and only if 
$(\row B)^\perp+(\row B')^\perp$ has non-trivial intersection with $V$ in $\RR^{n+1}$.

By our assumptions, $(\row B)^\perp$ and $(\row B')^\perp$ are linearly independent, and $(\row B)^\perp$, $(\row B')^\perp$, and $V$ have complimentary dimensions. Hence,
$(\row B)^\perp+(\row B')^\perp$ has non-trivial intersection with $V$ if and only if
\[
	(\row B)^\perp + (\row B')^\perp +  V=\RR^{n+1}. 
\]

Let $\bigwedge B\in \bigwedge^{n} \RR^{n+1}$ be the exterior product of the vectors in $\RR^{n+1}$ that are the rows of $B$. Let  $*$ be the Hodge $*$-operator \[*: \bigwedge^m \RR^{n+1}\to \bigwedge^{n+1-m} \RR^{n+1},\] that is,
for $\omega,\omega'\in \bigwedge^m \RR^{n+1}$, 
\[
\omega \wedge (\omega')^*=\langle \omega, \omega' \rangle \cdot e_0\wedge \cdots \wedge e_n.
\]
We will show that
\begin{equation}\label{eqn:theta}
	(\bigwedge B)^* \wedge (\bigwedge B')^* \wedge ((e_i-e_j)\wedge (e_k-e_\ell))^*=\pm \Theta_{ij,kl} e_0\wedge \cdots \wedge e_n.
\end{equation}
The second and third claims of the lemma will follow, since the linear subspaces of $\RR^{n+1}$ corresponding to the forms 
$(\bigwedge B)^*$, $(\bigwedge B')^*$, and $((e_i-e_j)\wedge (e_k-e_\ell))^*$ are exactly
$\row(B)^\perp$, $\row(B')^\perp$ and $V$.

To show 
\eqref{eqn:theta}, we use the properties of the $*$-operator and Laplace expansion in the first row of $B,B'$ to note that
\begin{align*}
	&(\bigwedge B)^*=\sum_m (-1)^{n-m} \Delta_m(B) e_m=\sum_{m\neq i} (-1)^{n-m} \Delta_{im}(A)e_m\\
	&(\bigwedge B')^*=\sum_m (-1)^{n-m} \Delta_m(B') e_m=\sum_{m\neq i} (-1)^{n-m} \Delta_{im}(A')e_m.
\end{align*}
Here $\Delta_m$ denotes the determinant of the submatrix obtained by deleting the $m$th column.
Since the $*$-operator is linear, the coefficient of $e_0\wedge\ldots \wedge e_n$ in the left hand side of \eqref{eqn:theta} will be the sum of the coefficients in $(\bigwedge B)^* \wedge (\bigwedge B')^*$ of 
\[
e_i\wedge e_k,\quad -e_i\wedge e_\ell, \quad -e_j \wedge e_k, \quad e_j\wedge e_\ell.
\]
By the above, these are
\begin{align*}
&e_i\wedge e_k: &0\\
&-e_i\wedge e_\ell: &0\\
&-e_j\wedge e_k: &-(-1)^{j+k}(\Delta_{ij}(A)\cdot\Delta_{ik}(A')-\Delta_{ik}(A)\cdot\Delta_{ij}(A'))\\
&e_j\wedge e_\ell:&(-1)^{j+\ell}(\Delta_{ij}(A)\cdot\Delta_{i\ell}(A')-\Delta_{i\ell}(A)\cdot\Delta_{ij}(A')) 
\end{align*}
and so the sum is
\begin{equation}\label{eqn:deltalong}
\begin{aligned}
&(-1)^j\Delta_{ij}(A)\Big((-1)^\ell\Delta_{i\ell}(A')-(-1)^k\Delta_{ik}(A')\Big)\\
&-(-1)^j\Big((-1)^\ell\Delta_{i\ell}(A)-(-1)^k\Delta_{ik}(A)\Big)\Delta_{ij}(A').
\end{aligned}
\end{equation}

By permuting the columns of $A$ and $A'$, we may assume without loss of generality that $i<k<\ell$, since this will only change the sign of $\Theta_{ij,k\ell}$.
Since $\mathds{1}$ is in the kernel of $A$ and $A'$, the multilinearity of the determinant implies that 
\[
	(-1)^{(i+k+\ell)}\Delta_{k\ell}(A)=(-1)^\ell \Delta_{i\ell}(A)-(-1)^k \Delta_{ik}(A)
\]
with a similar expression for $A'$. Substituting these expressions into \eqref{eqn:deltalong} yields $\pm \Theta_{ij,kl}$, completing the proof.
    \end{proof}

    Let $X$ be as in \S\ref{sec:setup} and Assumption \ref{ass:setup}.
    We now apply Lemma \ref{lemma:sc} to study simultaneous cancellation for tropical tangents at a vertex $\alpha\in \trop(X)$. Recall that the \emph{affine tangent space} of $\trop(X)$ at $\alpha$ is the affine span of a neighborhood of $\trop(X)$ near $\alpha$.
    \begin{prop}\label{prop:simultv}
    Let $\alpha\in \trop(X)$ be a vertex and let $\Lambda$ be the affine tangent space at $\alpha$. Let $I$, $J$ be sets of indices with $|I|=|J|=2$, such that for any edge $E$ adjacent to $\alpha$, $\langle E \rangle \not\subseteq \langle \overline I \rangle$ and $\langle E \rangle \not\subseteq \langle \overline J \rangle$. Then it is possible  to simultaneously have $\val(z_I)>\nu_I$ and $\val(z_J)>\nu_J$ if and only if:
    \begin{enumerate}
    	\item $I\cap J\neq \emptyset$, and 
    \[
    	\langle \Lambda \rangle \cap \langle \overline{I\cup J}\rangle\neq \{0\};\qquad\textrm{or}
    \]
    \item  $I\cap J=\emptyset$, and 
    \[
    	\langle \Lambda\rangle\cap \left(\langle \sum_{i\in I} e_i,\sum_{j\in J} e_j\rangle+\langle \overline{I\cup J}\rangle\right)\neq \{0\}.
    \]
    \end{enumerate}
    \end{prop}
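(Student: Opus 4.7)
The plan is to reduce the simultaneous cancellation condition to the vanishing of a $2\times 2$ Plücker-like determinant via Proposition \ref{prop:delta}, and then translate that vanishing into the geometric intersection condition in the statement via Lemma \ref{lemma:sc}. First, by Lemma \ref{lemma:vert}, the hypotheses on $I$ and $J$ guarantee that each of $\Delta_I(\bv), \Delta_I(\bv'), \Delta_J(\bv), \Delta_J(\bv')$ (and the pairwise differences $\Delta_I(\bv)-\Delta_I(\bv')$, $\Delta_J(\bv)-\Delta_J(\bv')$) is nonzero, so both $I$ and $J$ are cancellative indices in the sense of \S\ref{sec:simult}. Since $\alpha$ is a vertex with non-colliding valuations, the unique cancellative pair is $(\bv,\bv')$, so by Proposition \ref{prop:delta} simultaneous cancellation $\val(z_I)>\nu_I$ and $\val(z_J)>\nu_J$ is possible if and only if
\[
\Theta_{I,J}(\bv,\bv') \;=\; \Delta_I(\bv)\Delta_J(\bv')-\Delta_J(\bv)\Delta_I(\bv') \;=\; 0.
\]

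Next, I would identify $\langle F, F'\rangle$ with $\langle\Lambda\rangle$ so as to apply Lemma \ref{lemma:sc} with $A=\bv$, $A'=\bv'$. Both matrices have $\mathds{1}$ in their kernel because their rows lie in $M\subset \mathds{1}^\perp$. By the description of the cells of $\trop(X)$ in terms of the lowest-order terms of $f_1,\dots,f_{n-1}$, a tangent vector $d\in\TP$ to an edge $E$ at $\alpha$ is characterized by $\bw\cdot d=0$ for every exponent $\bw$ of minimal valuation of each $f_i$ along $E$. Hence the edge of $\trop(X)$ at $\alpha$ singled out by retaining $1$ and $c_{i_0\bv_{i_0}}x^{\bv_{i_0}}$ in $f_{i_0}$ has tangent direction equal to $F=\row(\bv)^\perp/\langle \mathds{1}\rangle$, and analogously for $F'$. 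By Assumption \ref{ass:setup} the vertex $\alpha$ is trivalent, so these two edge directions are linearly independent one-dimensional subspaces of $\TP$, and they already span the 2-dimensional space $\langle\Lambda\rangle$; thus $\langle F,F'\rangle=\langle\Lambda\rangle$.

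The two cases of the proposition now follow directly from Lemma \ref{lemma:sc}. When $I\cap J\neq\emptyset$ with $I\neq J$, writing $I=\{i,j\}$, $J=\{k,j\}$ for distinct $i,j,k$, part (2) of the lemma yields $\Theta_{I,J}(\bv,\bv')=0$ iff $\langle\Lambda\rangle\cap\langle\overline{\{i,j,k\}}\rangle\neq\{0\}$, which is condition (1). When $I\cap J=\emptyset$, part (3) gives $\Theta_{I,J}(\bv,\bv')=0$ iff $\langle\Lambda\rangle\cap\bigl(\langle \sum_{i\in I}e_i,\sum_{j\in J}e_j\rangle+\langle\overline{I\cup J}\rangle\bigr)\neq\{0\}$, matching condition (2). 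The degenerate case $I=J$ is automatic on both sides: simultaneous cancellation reduces to a single cancellation, achievable by Proposition \ref{prop:cancel}, while the intersection $\langle\Lambda\rangle\cap\langle\overline{I}\rangle$ of subspaces of dimensions $2$ and $n-1$ in $\TP$ is forced to be non-trivial by a dimension count. The main technical point is the identification $\langle F,F'\rangle=\langle\Lambda\rangle$, i.e.~that $F\neq F'$; this is where trivalency of $\alpha$ is essential, since without it the two ``edges'' distinguished by $\bv$ and $\bv'$ could have parallel tangent directions and the vanishing of $\Theta_{I,J}$ would reflect less refined geometry.
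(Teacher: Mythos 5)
Your proof is correct and follows essentially the same route as the paper's (which is terser): identify the cancellative pair as $(\bv,\bv')$ via Lemma~\ref{lemma:vert}, invoke Proposition~\ref{prop:delta} to reduce to the vanishing of $\Theta_{I,J}(\bv,\bv')$, observe $\langle F,F'\rangle=\langle\Lambda\rangle$, and apply Lemma~\ref{lemma:sc}. One small imprecision: trivalency alone does not force the two edge directions $F$ and $F'$ to be linearly independent (a trivalent tropical vertex can have parallel or anti-parallel edges); what actually guarantees $F\neq F'$ here is that $\bv$ and $\bv'$ differ only in row $i_0$, so if $F=F'$ then $\bv_{i_0}'\in\row(\bv)$ and hence $\bv_{i_0}'\cdot d=0$ for $d$ spanning $F$, contradicting that the term $c_{i_0\bv_{i_0}'}x^{\bv_{i_0}'}$ acquires strictly larger valuation along that edge. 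Your handling of the degenerate case $I=J$ (not covered by Lemma~\ref{lemma:sc}, and silently passed over in the paper) is a welcome completion.
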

    \begin{proof}
    	By Lemma \ref{lemma:vert}, we obtain cancellative pairs $\br_I=\br_J=\bv$ and $\br_I'=\br_J'=\bv'$. Taking $A=\br$ and $A'=\br'$, we obtain $\langle \Lambda \rangle=\langle F,F'\rangle$. The claim now follows from Lemma \ref{lemma:sc} and Proposition \ref{prop:delta}.
    \end{proof}
    
    \begin{ex}[Example \ref{ex:running} continued]\label{ex:comb2}
    	We may use Proposition \ref{prop:simultv} to revisit the  simultaneous cancellation analysis at the vertices $V_1$, $V_2$, and $V_3$, see also Example \ref{ex:simult}.
    At the vertex $V_1$,   the lineality space $\langle \Lambda \rangle $ of the affine tangent space  is spanned by $(-1,3,1)$ and $(-2,-4,-3)$. 
    This contains $e_0$, but not $e_i$ for $i\neq 0$. Furthermore, it does not contain $e_i+e_j$ for any $0\leq i < j \leq 3$. It follows that the only index sets for which simultaneous cancellation occurs are $I,J$ with $I,J\subset \{1,2,3\}$.
    
    At the vertex $V_2$, $\langle \Lambda \rangle $ is spanned by $(3,2,4)$ and $(0,-1,-2)$. 
    This contains $e_1$, but not $e_i$ for $i\neq 1$, and not $e_i+e_j$ for any $0\leq i < j \leq 3$. It follows that the only index sets for which simultaneous cancellation occurs are $I,J$ with $I,J\subset \{0,2,3\}$.
    
    At the vertex $V_3$, the space $\langle \Lambda \rangle$ is spanned by $(0,1,2)$ and $(0,3,1)$, which is just $\langle  e_2,e_3\rangle$. We will not be able to use Proposition \ref{prop:simultv} to say anything about the index $I={0,1}$. However, we see that for other index sets, we only have simultaneous cancellation for $\{0,2\},\{1,2\}$ and $\{0,3\},\{1,3\}$.
    \end{ex}

    \subsection{The generic case}
    We may use the results of the previous two sections to describe the generic behaviour of the tropical Gauss map. Let $X\subset \PP^n$ be a curve satisfying the hypotheses of \S \ref{sec:setup} and Assumption \ref{ass:setup}. 
    
    \begin{thm}\label{thm:genericrestate}
    Fix a point $\alpha\in\trop(X)$, and let $\Lambda\subset \TP$ denote the affine tangent space to $\trop(X)$ at $\alpha$. Assume that $\langle \Lambda \rangle$ is not contained in any  $\langle J \rangle $ with $|J|=n-1$.
    \begin{enumerate}
    	\item If $\alpha$ is in the relative interior of an edge of $\trop(X)$, then there is a unique tropical tangent $\beta$ to $\alpha$, and its tropical Pl\"ucker coordinates are
    \[
    	\beta_I=\sum_{j\notin I} -\alpha_j\qquad I\subset \{0,\ldots,n\},\ |I|=2.
    \]
    
    \item Suppose that $\alpha$ is a vertex of $\trop(X)$, and $\langle \Lambda \rangle \cap (\langle e_i+e_j\rangle + \langle J'\rangle)=\{0\}$ for all $i,j$ and $J'$ with $|J'|=n-3$. Then $\beta$ is a tropical tangent to $\alpha$ if and only if 
    \[
    	\beta_I=\lambda_I+\sum_{j\notin I} -\alpha_j\qquad I\subset \{0,\ldots,n\},\ |I|=2
    \]
    with all $\lambda_I\geq 0$, at most one $\lambda_I\neq 0$, and $\lambda_I=0$ if $\langle E \rangle \subset \langle \overline I \rangle$ for an edge $E$ of $\trop(X)$ adjacent to $\alpha$.
    \end{enumerate}
    \end{thm}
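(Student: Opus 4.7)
The plan is to translate the statement into claims about the valuations $\val(z_I)$ of the quantities from \S\ref{sec:plucker} (via equation \eqref{eqn:pz}), interpret the hypotheses on $\langle\Lambda\rangle$ combinatorially using Lemmas \ref{lemma:edge} and \ref{lemma:vert}, and then apply the no-cancellation/cancellation dichotomy of Propositions \ref{prop:nocancel} and \ref{prop:cancel}, together with the simultaneous-cancellation criterion of Proposition \ref{prop:simultv} (and Remark \ref{rem:cancel}).

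For part (1), the assumption $\langle\Lambda\rangle=\langle E\rangle\not\subset\langle J\rangle$ for $|J|=n-1$ rephrases as $\langle E\rangle\not\subset\langle\overline I\rangle$ for every $|I|=2$, so Lemma \ref{lemma:edge} forces $\Delta_I(\bv)\neq 0$ for every $I$ and hence $\nu_I(\alpha)=0$. At a generic $\alpha$ in the relative interior of $E$, Proposition \ref{prop:nocancel}(1) immediately gives $\val(z_I)=0$, and equation \eqref{eqn:pz} produces $\beta_I=-\sum_{j\notin I}\alpha_j$ as the unique tropical tangent. At any isolated critical point of $E$, the same conclusion is verified case by case on a cancellative pair $\br,\br'$: either $\sum_i(\br-\br')_i$ lies in $\langle\{\bv_i\}\rangle$ and Proposition \ref{prop:nocancel}(4) applies, or it does not, in which case the non-vanishing of every $\Delta_I(\bv)$ together with a direct inspection of the leading terms of $z_I$ shows cancellation cannot raise $\val(z_I)$ above $0$.

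For part (2) I proceed similarly. At a vertex $\dim\langle\Lambda\rangle=2$ by Assumption \ref{ass:setup}. Lemma \ref{lemma:vert}(2) combined with the first hypothesis yields $\nu_I(\alpha)=0$ for every $I$; setting $\lambda_I:=\val(z_I)$ then gives $\lambda_I\geq 0$. The equality $\lambda_I=0$ whenever some edge $E$ at $\alpha$ satisfies $\langle E\rangle\subset\langle\overline I\rangle$ follows from Lemma \ref{lemma:vert}(1) and Proposition \ref{prop:nocancel}: such an edge forces one of $\Delta_I(\bv)$, $\Delta_I(\bv')$, or $\Delta_I(\bv)-\Delta_I(\bv')$ to vanish, placing us in a non-cancellation case that pins $\val(z_I)=0$. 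Conversely, when no such edge exists, all three determinants are nonzero and Remark \ref{rem:cancel} (applied to Proposition \ref{prop:cancel}) realizes every value $\lambda_I\geq 0$.

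The crucial step is the ``at most one $\lambda_I\neq 0$'' assertion. For distinct $I,J$ with $\lambda_I,\lambda_J>0$, Proposition \ref{prop:simultv} demands either (a) $I\cap J\neq\emptyset$ and $\langle\Lambda\rangle\cap\langle\overline{I\cup J}\rangle\neq\{0\}$, or (b) $I\cap J=\emptyset$ and $\langle\Lambda\rangle\cap\bigl(\langle\textstyle\sum_{i\in I}e_i,\sum_{j\in J}e_j\rangle+\langle\overline{I\cup J}\rangle\bigr)\neq\{0\}$. In case (b), $|\overline{I\cup J}|=n-3$ and the relation $\sum_k e_k=0$ in $\TP$ collapses the given subspace to $\langle e_{i_1}+e_{i_2}\rangle+\langle\overline{I\cup J}\rangle$, exactly a subspace forbidden by the second hypothesis. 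In case (a), $|\overline{I\cup J}|=n-2$; picking any $c\in\overline{I\cup J}$ and setting $J':=\overline{I\cup J}\setminus\{c\}$ (so $|J'|=n-3$) lets us write $\langle\overline{I\cup J}\rangle=\langle e_c+e_r\rangle+\langle J'\rangle$ for any $r\in J'$, which the second hypothesis also excludes. Both cases are therefore ruled out, completing the plan. The main obstacle I anticipate is the critical-point analysis in part (1): showing that cancellative pairs cannot raise $\val(z_I)$ above $0$ at isolated critical points of the edge, which requires exploiting the non-vanishing of every $\Delta_I(\bv)$ in tandem with the structure of the cancellative pair.
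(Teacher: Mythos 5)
Your approach mirrors the paper's: translate the hypotheses into vanishing conditions on $\Delta_I(\bv)$ via Lemmas \ref{lemma:edge} and \ref{lemma:vert}, then invoke the no-cancellation/cancellation dichotomy of Propositions \ref{prop:nocancel} and \ref{prop:cancel}, plus Proposition \ref{prop:simultv} for the ``at most one $\lambda_I\neq 0$'' constraint. Your part (2) argument is essentially the paper's, and your careful unwinding of why cases (a) and (b) of Proposition~\ref{prop:simultv} are both excluded by the second hypothesis (using $\sum_k e_k=0$ to collapse the case-(b) subspace, and a choice of $c\in\overline{I\cup J}$ in case (a)) is a useful expansion of what the paper merely asserts.

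However, your part (1) argument contains a genuine gap. You split the analysis at a critical point of the edge into cases based on whether $\sum_i(\br-\br')_i$ lies in $\vspan\{\bv_i\}$, invoking Proposition~\ref{prop:nocancel}(4) in the first case and a hand-waved ``direct inspection'' in the second. This is wrong on two counts. First, Proposition~\ref{prop:nocancel}(4) requires non-colliding valuations and sufficiently general lowest order parts, hypotheses that are \emph{not} part of Theorem~\ref{thm:genericrestate}. Second, in your other case (where $\sum_i(\br-\br')_i\notin\vspan\{\bv_i\}$), Proposition~\ref{prop:cancel}(2) would in fact \emph{allow} $\val(z_I)$ to be arbitrarily large, directly contradicting your claim that ``cancellation cannot raise $\val(z_I)$ above $0$.'' The way out of both problems is the observation you are missing: for $\alpha$ in the \emph{relative interior} of an edge $E$, the tuple $\bv$ achieving the minimum in \eqref{eqn:min} (since $\Delta_I(\bv)\neq 0$ by Lemma~\ref{lemma:edge}) is never part of any cancellative pair, because $\val(a_{\bv})=0$ while $\val(a_{\bw})>0$ for every other $\bw\in\A$ (any $\bw\neq\bv$ has some $\bw_j\neq\bv_j$ whose corresponding monomial has strictly positive valuation in the interior of $E$). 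Hence Proposition~\ref{prop:nocancel}(2) applies directly at every $\alpha$ in the relative interior of $E$, critical or not, with no extra hypotheses needed. This is what the paper means by ``the first two parts of Proposition~\ref{prop:nocancel}.''
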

    \begin{proof} 
    	We first suppose that $\alpha$ is in the relative interior of an edge $E$, in which case $\langle \Lambda\rangle=\langle E \rangle$. Lemma \ref{lemma:edge} and  the first two parts of Proposition \ref{prop:nocancel} yield the claim.
    
    	Suppose instead that $\alpha$ is a vertex. Using Lemma \ref{lemma:vert}, the assumption that $\langle \Lambda \rangle$ is not contained in any  $\langle I \rangle $ implies that at least one of $\Delta_I(\bv)$ or $\Delta_I(\bv')$ is non-zero. From the definition of $\nu_I$ (see~\ref{eqn:min}), we then have $\nu_I=0$ for all $I$.
    	Furthermore, one of $\Delta_I(\bv), \Delta_I(\bv'), \Delta_I(\bv)-\Delta_I(\bv')$ is 0 if and only if $\langle E \rangle \subset \langle \overline I \rangle$ for an edge $E$ of $\trop(X)$ adjacent to $\alpha$.		 When that is the case, the second part of Proposition~\ref{prop:nocancel} implies that $\val(z_I) = \nu_I=0$.
    
    Conversely, when none of $\Delta_I(\bv), \Delta_I(\bv'), \Delta_I(\bv)-\Delta_I(\bv')$ equals to $0$, we may use Proposition~\ref{prop:cancel} to conclude that  $\lambda_I$ can be any quantity greater than or equal to $0$ (note that, while we have not assumed non-colliding valuations, Proposition~\ref{prop:cancel}  still applies due to Remark~\ref{rem:cancel}). Finally,  Proposition~\ref{prop:simultv} guarantees that there is at most a single non-zero $\lambda_I$, and the claim follows. 
    \end{proof}
    
    In the setting of Theorem \ref{thm:genericrestate}, it is possible to give a more ``geometric'' description of the tropical tangents at a point $\alpha\in\trop(X)$. For any $i\neq j\in \{0,\ldots,n\}$ and any $\lambda\geq 0$, consider the tropical line
    \begin{align*}
    	\LL_{ij}(\lambda)= \conv\{0,\lambda(e_i+e_j)\}\cup \big(\lambda(e_i+e_j)+\RR_{\geq 0}\cdot e_i\big)\cup \big(\lambda(e_i+e_j)+\RR_{\geq 0}\cdot e_j\big)\\
    	\cup\bigcup_{k\neq i,j}\RR_{\geq 0}\cdot e_k.
    \end{align*}
    This is the unique tropical line with vertices $0$ and $\lambda(e_i+e_j)$, and unbounded directions $e_0,\ldots,e_n$. We set $\LL:=\LL_{ij}(0)$ for any $i,j$.
    
    \begin{cor}\label{cor:geom}
    Fix a point $\alpha\in\trop(X)$, and let $\Lambda\subset \TP$ denote the affine tangent space to $\trop(X)$ at $\alpha$. Assume that $\langle \Lambda \rangle$ is not contained in any  $\langle J \rangle $ with $|J|=n-1$.
    \begin{enumerate}
    	\item
    If $\alpha$ is in the relative interior of an edge of $\trop(X)$, then the unique tropical tangent at $\alpha$ is the tropical line whose vertex is at $\alpha$, namely $\alpha+\LL$.
    
    \item Suppose that $\alpha$ is a vertex of $\trop(X)$, and $\langle \Lambda \rangle \cap (\langle e_i+e_j\rangle + \langle J'\rangle)=\{0\}$ for all $i,j$ and $J'$ with $|J'|=n-3$. Then 
    	the tropical tangents at $\alpha$ are exactly the tropical lines $\alpha+\LL_{ij}(\lambda)$ for $\lambda\geq 0$ and those $i,j$ such that 
    	the affine tangent space $\Lambda'$ of $\alpha+\LL_{ij}(\lambda)$ at $\alpha$ intersects $\trop(X)$ dimensionally transversally at $\alpha$, that is,
    	$\langle E \rangle \not\subseteq \langle \Lambda'\rangle $ for every edge $E$ of $\trop(X)$ adjacent to $\alpha$.
    \end{enumerate}
    \end{cor}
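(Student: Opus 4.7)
The plan is to derive the corollary from Theorem \ref{thm:genericrestate} by translating its Pl\"ucker coordinate description into the geometric language of tropical lines. Since a point $\beta \in \trop(\Gr(2,n+1))$ is in bijection with a tropical line in $\TP$, with the Pl\"ucker coordinates determining the line up to tropical projective rescaling, it suffices to identify which tropical line corresponds to each tuple produced by Theorem \ref{thm:genericrestate}, and then to match the Pl\"ucker vanishing condition in that theorem with the geometric transversality condition stated here.

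The key identity to establish is that the tropical line $\alpha + \LL_{ij}(\lambda)$ has tropical Pl\"ucker coordinates $\beta_{\{k,\ell\}} = \alpha_k + \alpha_\ell + \lambda \cdot \delta_{\{k,\ell\},\{i,j\}}$. By the $(\KK^*)^{n+1}$-action on $\Gr(2,n+1)$, translating a tropical line by $\alpha \in \TP$ shifts each Pl\"ucker coordinate $\beta_{\{k,\ell\}}$ by $\alpha_k + \alpha_\ell$, so it suffices to work with $\alpha = 0$. For $\lambda = 0$ this is the standard fact that the tropical line $\LL$ at the origin has all Pl\"ucker coordinates equal. For $\lambda > 0$, one can write down an explicit lift: the line in $\PP^n$ spanned by $(1,1,\ldots,1)$ and $(0,u_1,\ldots,u_n)$ with generic $u_k$ of valuation $0$, except that $u_i - u_j$ has valuation $\lambda$. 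A direct Pl\"ucker computation yields $p_{k\ell} = u_\ell - u_k$ with the required valuations, and tracking the tropicalization of the parametrized line near the points where the various coordinates vanish confirms that it tropicalizes to $\LL_{ij}(\lambda)$. After the tropical projective shift by $-\sum_m \alpha_m$, the Pl\"ucker coordinates above become precisely $\beta_I = \lambda_I - \sum_{j \notin I} \alpha_j$ with $\lambda_I$ of the form prescribed in Theorem \ref{thm:genericrestate}.

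It remains to match the transversality condition. For $\lambda > 0$, the edges of $\alpha + \LL_{ij}(\lambda)$ at the vertex $\alpha$ point in the directions $e_k$ for $k \notin \{i,j\}$ (the unbounded rays) together with $e_i + e_j$ (the bounded edge). Since $\sum_k e_k = 0$ in $\TP$ forces $e_i + e_j \in \langle \overline{\{i,j\}} \rangle$, the affine tangent space $\Lambda'$ of $\alpha + \LL_{ij}(\lambda)$ at $\alpha$ satisfies $\langle \Lambda' \rangle = \langle \overline I \rangle$ for $I = \{i,j\}$. Thus the condition in Theorem \ref{thm:genericrestate} that $\lambda_I = 0$ whenever some edge $E$ of $\trop(X)$ at $\alpha$ satisfies $\langle E \rangle \subseteq \langle \overline I \rangle$ becomes exactly the transversality requirement $\langle E \rangle \not\subseteq \langle \Lambda' \rangle$ for every such $E$. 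The $\lambda = 0$ case corresponds to all $\lambda_I = 0$ and yields the standard tangent $\alpha + \LL$: this is the unique tropical tangent in part (1) and is also always among the tropical tangents at a vertex in part (2). The main technical obstacle is the Pl\"ucker and parametrization bookkeeping for the bent line; once that computation is in place, everything else follows by direct translation.
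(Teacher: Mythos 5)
Your proof is correct and carries out the translation that the paper's terse proof leaves to the reader (``This follows directly from Theorem \ref{thm:genericrestate} by translating between tropical Pl\"ucker coordinates and the corresponding tropical line in $\TP$. \ldots Details are left to the reader.''). The paper's suggested mechanism is to invoke the tautological-bundle equations of Theorem \ref{thm:dual}, i.e., given a tropical Pl\"ucker vector $\beta$ recover the tropical line as $\{\gamma : (\beta,\gamma)\in\trop(\PP(\cS))\}$; you instead go the other direction, writing down an explicit classical lift of $\LL_{ij}(\lambda)$, computing its Pl\"ucker valuations, and reading off the tropicalization by hand. These are two ways of invoking the same underlying fact (that a tropical linear space is determined by the valuations of its Pl\"ucker coordinates, \cite[Theorem 3.8]{ss}), and your version has the advantage of being concrete and self-contained. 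Two small points worth cleaning up. First, your parametrization $(1,\ldots,1)$ and $(0,u_1,\ldots,u_n)$ implicitly forces $u_0=0$, so it does not directly handle the case $0\in\{i,j\}$; this is harmless by symmetry (relabel which coordinate plays the role of $x_0$), but should be said. Second, your argument uses --- without naming it --- that the tropicalization of a linear space is determined by its Pl\"ucker valuations; since you are matching a lifted line's Pl\"ucker data with the candidate output of Theorem \ref{thm:genericrestate}, you should cite \cite[Theorem 3.8]{ss} to close the loop between ``this particular lift tropicalizes to $\alpha+\LL_{ij}(\lambda)$'' and ``every point of $\Gr(2,n+1)$ with these Pl\"ucker valuations tropicalizes to $\alpha+\LL_{ij}(\lambda)$.''
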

    \begin{proof}
    	This follows directly from Theorem \ref{thm:genericrestate} by translating between tropical Pl\"ucker coordinates and the corresponding tropical line in $\TP$. This translation can be done using the equations from Theorem \ref{thm:dual} below. Details are left to the reader.
    \end{proof}
    
    In Figure \ref{fig:geom}, we illustrate the two cases of Corollary \ref{cor:geom}. Each grey part represents a local picture of a tropical curve, whereas the black part represents a tropical tangent at $\alpha$.     The picture on the left depicts the first case of Corollary \ref{cor:geom}, namely, $\alpha$ is in the relative interior of an edge and admits  a unique tropical tangent. The picture on the right depicts the second case of the corollary, namely, $\alpha$ is a vertex, and there is a family of tangents coming from the choice of $(i,j)$ and a parameter $\lambda$.

%    tropical tangents at a vertex and along an edge for a tropical curve in $\TT^3$ satisfying the hypotheses of Corollary \ref{cor:geom}. 
%    The unbounded edges are labeled by their unbounded directions.
    In the more general situation where the genericity hypotheses of Corollary \ref{cor:geom} are not met, we do not know of a satisfying geometric description of the tropical tangents. One obstruction is that in the general situation, the set of tropical tangents is not determined solely by the local structure of $\trop(X)$ near $\alpha$. However, in the case of tropically smooth plane curves, there is a satisfying geometric description; see \cite{ilten-len}.
    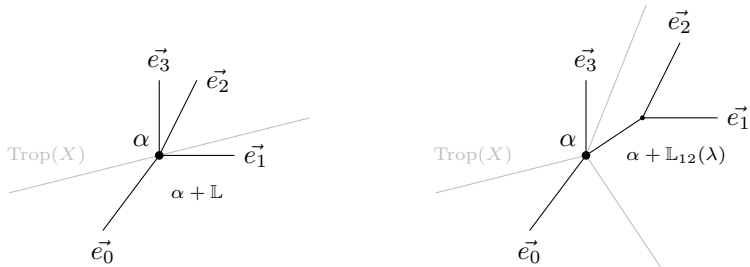
\begin{figure}
    	\begin{tikzpicture}[scale=.5]
    		\draw[lightgray] (-4,-1) -- (4,1);
    		\draw (0,2) -- (0,0) -- (2,0); 
    		\draw (1,2) -- (0,0) -- (-1.5,-2); 
    \draw[fill] (0,0) circle [radius=0.1];
    	\node [above left] at (0,0) {$\alpha$};
    	\node[lightgray] at (-3,0) {\scriptsize$\trop(X)$};
    	\node [right] at (2,0) {$\vec{e_1}$};
    	\node [above] at (0,2) {$\vec{e_3}$};
    	\node [right] at (1,2) {$\vec{e_2}$};
    	\node [below] at (-1.5,-2) {$\vec{e_0}$};
    	\node at (1,-1) {\scriptsize$\alpha+\LL$};
    	\end{tikzpicture}
    	\qquad\qquad
    	\begin{tikzpicture}[scale=.5]
    \draw (0,2) -- (0,0) -- (-1.5,-2);
    \draw (0,0) -- (1.5,1);
    \draw (3.5,1) -- (1.5,1) -- (2.5,3);
    \draw [lightgray] (-4,-1) -- (0,0) -- (1.6,4);
    \draw [lightgray] (0,0) -- (2,-3);
    		\draw[fill] (0,0) circle [radius=0.1];
    		\draw[fill] (1.5,1) circle [radius=0.05];
    	\node [above left] at (0,0) {$\alpha$};
    	\node[lightgray] at (-3,0) {\scriptsize$\trop(X)$};
    	\node [right] at (3.5,1) {$\vec{e_1}$};
    	\node [above] at (2.5,3) {$\vec{e_2}$};
    	\node [above] at (0,2) {$\vec{e_3}$};
    	\node [below] at (-1.5,-2) {$\vec{e_0}$};
    	\node at (2.4,0) {\scriptsize$\alpha+\LL_{12}(\lambda)$};
    	\end{tikzpicture}
    	\caption{Tropical tangents (in black) for tropical curves (in grey) in $\TT^3$}\label{fig:geom}
    \end{figure}

    \section{Tropical Tangential and Dual Varieties}\label{sec:bundle}
    \subsection{Projections of tautological bundles}\label{sec:bundlep}
    The Grassmannian $\Gr(m+1,V)$ comes equipped with two tautological vector bundles. The \emph{tautological sub-bundle} $\cS$ is the sub-bundle of the trivial bundle $\Gr(m+1,V)\times V$ whose fiber at $P\in\Gr(m+1,V)$ is the subspace of $V$ corresponding to $P$. The \emph{tautological quotient bundle} $\cQ$ is the quotient of the trivial bundle $\Gr(m+1,V)\times V$ whose fiber at $P\in\Gr(m+1,V)$ is the quotient of $V$ by the subspace of $V$ corresponding to $P$.
    
    Consider an $m$-dimensional variety $X\subset \PP^n$ (e.g. a curve as in \S\ref{sec:setup}). We restrict the tautological sub-bundle $\cS$ of $\Gr(m+1,n+1)$ to the closure of the image of the Gauss map $\G(X)$ and projectivize its fibers to obtain a projective bundle
    \[
    	\PP\left(\cS_{|\G(X})\right)\subset \G(X)\times \PP^n.
    \]
    The dual $\cQ^*$ of the quotient bundle is a sub-bundle of the trivial bundle $\Gr(m+1,n+1)\times (\KK^{n+1})^*$. We may similarly restrict this to $\G(X)$ and projectivize to obtain a projective bundle
    \[
    	\PP\left(\cQ^*_{|\G(X})\right)\subset \G(X)\times (\PP^n)^*.
    \]
    
    \begin{lemma}Let $X\subset \PP^n$ be a projective variety.
    The tangential variety  $\tau(X)$ is the image of $\PP(\cS_{|\G(X)})$ under the projection to $\PP^n$.
    Similarly, the dual variety $X^*$ is the image of $\PP(\cQ^*_{|\G(X)})$ under the projection to $(\PP^n)^*$.
    \end{lemma}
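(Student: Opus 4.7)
The plan is to prove both statements by unraveling the definitions of the tautological bundles, identifying the fibers of the two projective bundles with familiar projective subspaces, and then using properness of projective bundles to handle the closure. The two cases are formally symmetric under the canonical duality $\Gr(m+1,V) \cong \Gr(n-m,V^*)$ sending a subspace to its orthogonal complement, so one can treat the tangential case carefully and read off the dual case.

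For $\tau(X)$, I would first observe that by definition of the tautological subbundle, the fiber of $\PP(\cS_{|\G(X)})$ over a point $L \in \G(X)$ is canonically the projective subspace $\PP(L) \subset \PP^n$. In particular, over $\G_X(P)$ for a smooth point $P$ at which $\G_X$ is defined, the fiber is precisely the tangent space $T_P X \subset \PP^n$. Hence the image of the projection $\pi: \PP(\cS_{|\G(X)}) \to \PP^n$ contains $\bigcup_P T_P X$, where $P$ ranges over smooth points where $\G_X$ is defined. Since $\G(X)$ is closed in the projective Grassmannian, $\PP(\cS_{|\G(X)})$ is projective, so $\pi$ is proper and its image is closed, yielding $\tau(X) \subseteq \operatorname{im}(\pi)$. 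For the reverse inclusion, any $(L,Q)$ in the total space arises as a limit of pairs $(T_{P_n}X, Q_n)$ with $P_n$ smooth and $Q_n \in T_{P_n} X$, so $\pi(L,Q) = \lim Q_n$ lies in $\tau(X)$ by the closure appearing in its definition.

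The dual case is entirely analogous, once one identifies the fiber of $\cQ^*$ over $L$ with $(V/L)^* \cong L^\perp \subset V^*$, so that points of $\PP(L^\perp) \subset (\PP^n)^*$ are exactly the hyperplanes $H \subset \PP^n$ containing $\PP(L)$; the same properness and closure arguments apply. The one step that merits care is the lifting in the reverse inclusion: given $(L,Q)$ in the total space over a limit point $L \in \G(X) \setminus \G_X(X_{\mathrm{sm}})$, one must produce an approximating sequence $(L_n, Q_n) \to (L,Q)$ with $L_n = T_{P_n} X$. I expect this to be the only nonformal ingredient, and it follows from properness of the projective bundle projection $\PP(\cS_{|\G(X)}) \to \G(X)$ (respectively $\PP(\cQ^*_{|\G(X)}) \to \G(X)$) together with the density of $\G_X(X_{\mathrm{sm}})$ in $\G(X)$: arcs in the base limiting to $L$ lift, after a finite base change if necessary, to arcs in the total space with prescribed limit in the fiber over $L$.
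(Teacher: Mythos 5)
Your proof is correct and takes essentially the same approach as the paper's one-line proof (``a straightforward unraveling of the definitions''); you simply make explicit the properness and density bookkeeping that the authors leave implicit.
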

    \begin{proof}
    This follows from a straightforward unraveling of the definitions of $\tau(X)$ and $X^*$.
    \end{proof}

    It is easy to write down equations cutting out
    \[
    	\PP(\cS)\subset \Gr(m+1,n+1)\times \PP^n.
    \]
    Indeed, it follows from the Laplace expansion of the determinant that the ${n+1}\choose {m+2}$ equations
    \begin{equation}\label{eqn:taut}
    	\sum_{i=0}^{m+1} (-1)^i p_{J\setminus j_i} \cdot x_i=0;\qquad J=\{j_0,\ldots,j_{m+1}\}\subset \{0,\ldots,n\},\ j_i<j_{i+1}
    \end{equation}
    cut out $\cS$. Here, the $p_I$ are Pl\"ucker coordinates on $\Gr(m+1,n+1)$ and the $x_i$ are coordinates on $\PP^{n}$.
    We will be particularly interested in the case $m=n-2$, in which case we obtain exactly $n+1$ equations.
    
    Under the canonical isomorphism $\Gr(m+1,\KK^{n+1})\to \Gr(n-m,(\KK^{n+1})^*)$, the tautological sub-bundle on $\Gr(m+1,\KK^{n+1})$ is identified with the dual of the tautological quotient bundle on $\Gr(n-m,(\KK^{n+1})^*)$, and vice versa. In particular,  using \eqref{eqn:dualp} along with \eqref{eqn:taut} we have that $\PP(\cQ^*)$ is cut out by
    \begin{equation}
    \begin{aligned}\label{eqn:dual}
    	\sum_{i=0}^{n-m} (-1)^{i+j_i+\sum J} & p_{\overline{J\setminus j_i}} \cdot y_i=0; \\
    	 &J=\{j_0,\ldots,j_{n-m}\}\subset \{0,\ldots,n\},\ j_i<j_{i+1}
    \end{aligned}
    \end{equation}
    where the $y_i$ are coordinates on $(\PP^{n})^*$.

    \subsection{Tropicalization}
    We now fix a curve $X\subset \PP^n$. Let $\cJ$ be the collection of Pl\"ucker indices $J$ with $|J|=2$ such that $p_J$ vanishes on  $\G(X)\subset \Gr(2,n+1)$, that is, $\G(X)$ is contained in the hyperplane $V(p_J)$. We will view $\G(X)$ as a subvariety of the projective space $\PP(\cJ)$ where we have eliminated $p_J$, $J\in \cJ$. See Theorem \ref{thm:gausszero} for a tropical characterization of $\cJ$.

    In the following theorem, we \emph{do not} need to make any of the assumptions of \S\ref{sec:setup}, in particular, we are not assuming that $X$ is a complete intersection curve.
    \begin{thm}\label{thm:dual}
    The tropical variety $\trop(X^*)$ is the projection to $(\TP)^*$ of  the intersection of
    $\trop(\G(X))\times (\TP)^*$
    with the $n+1$ tropical hypersurfaces
    \[
    \trop(V(\sum_{\substack{i\neq j\\ \{i,j\}\notin \cJ}}p_{ij}y_i))
    \]
    for $j=0,\ldots,n$.
    
    Similarly, the tropical variety $\trop(\tau(X))$ is the projection to $\TP$ of the intersection of $\trop(\G(X))\times\TP$ with the 
    ${n+1}\choose3$ tropical hypersurfaces
    \[
    \trop(V(p_{ij}x_k + p_{ik}x_j+p_{jk}x_i))
    \]
    where we are setting $p_J=0$ if $J\in\cJ$.
    \end{thm}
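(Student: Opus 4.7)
The plan is to combine the bundle description from \S\ref{sec:bundlep} with the defining equations \eqref{eqn:taut} and \eqref{eqn:dual}, exploiting the fact that tropicalization commutes with coordinate projections. By the lemma in \S\ref{sec:bundlep}, we have $\tau(X) = \pi_2(\PP(\cS_{|\G(X)}))$ with $\PP(\cS_{|\G(X)}) \subset \G(X) \times \PP^n$, and similarly $X^* = \pi_2(\PP(\cQ^*_{|\G(X)}))$; since $\pi_2$ is a coordinate projection of tori, tropicalization commutes with $\pi_2$ up to taking closures. So it suffices to compute $\trop(\PP(\cS_{|\G(X)}))$ and $\trop(\PP(\cQ^*_{|\G(X)}))$. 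Specializing \eqref{eqn:taut} and \eqref{eqn:dual} to $m=1$ gives precisely the equations appearing in the theorem (after setting $p_J=0$ for $J \in \cJ$ to reflect that $\G(X)$ avoids these Pl\"ucker coordinates).

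The main work is to establish the identification
\[
\trop(\PP(\cS_{|\G(X)})) = (\trop(\G(X)) \times \TP) \cap \bigcap_{i<j<k} \trop(V(p_{jk}x_i - p_{ik}x_j + p_{ij}x_k)),
\]
and analogously for the dual bundle; the containment $\subseteq$ is automatic from Kapranov's theorem. For the reverse containment, given $(\alpha,\beta)$ in the right-hand side, I would argue fiberwise: first lift $\alpha$ to a point $P\in \G(X)$, reducing the problem to finding $Q$ in the line $L_P \subset \PP^n$ (the fiber of $\PP(\cS_{|\G(X)})$ over $P$) with $\trop(Q)=\beta$. The $n-1$ independent linear equations defining $L_P$ are obtained by specializing \eqref{eqn:taut} at $P$, and the hypothesis on $\beta$ guarantees, via Kapranov's theorem, that $\beta$ satisfies the tropicalizations of these specialized linear equations. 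Since these equations form a genuine complete intersection defining the line, Osserman-Payne lifting (Theorem \ref{thm:lifting}) then produces the desired $Q \in L_P$. The argument for $\PP(\cQ^*_{|\G(X)})$ is parallel, with $\PP^{n-2}$ fibers in place of lines.

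The main obstacle is the redundancy in the full list of equations \eqref{eqn:taut} and \eqref{eqn:dual}, which satisfy the Pl\"ucker relations, and therefore do not form a complete intersection in the ambient product --- this rules out a direct global application of Osserman-Payne. The fiberwise strategy above circumvents this by first discharging the $\G(X)$ coordinate, after which the remaining specialized linear equations on the fiber genuinely do form a complete intersection. A secondary subtlety concerns the case $\cJ \neq \emptyset$: one must work within the subtorus of the Pl\"ucker projective space obtained by eliminating the identically vanishing Pl\"ucker coordinates, matching the extended tropicalization viewpoint from the introduction of \S\ref{sec:setup}.
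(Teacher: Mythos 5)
Your high-level strategy coincides with the paper's: use the bundle description from \S\ref{sec:bundlep}, observe that tropicalization commutes with the coordinate projection $\pi$, and reduce to computing $\trop(\PP(\cS_{|\G(X)}))$ and $\trop(\PP(\cQ^*_{|\G(X)}))$ fiberwise over $\G(X)$. Where you diverge is in the final, crucial step: the paper establishes the fiberwise equality by citing that, for fixed Pl\"ucker coordinates, the equations \eqref{eqn:taut} and \eqref{eqn:dual} form a \emph{tropical basis} (Bogart--Jensen--Speyer--Sturmfels--Thomas, \cite[Prop.~1.6]{sturmfels-polytopes}), whereas you substitute an Osserman--Payne lifting argument.

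That substitution introduces a genuine gap. You write that ``these equations form a genuine complete intersection defining the line'' and conclude that Osserman--Payne produces the lift. But Theorem \ref{thm:lifting} requires the \emph{tropical} transversality condition: near $\beta$, the tropicalizations of the chosen $n-1$ hypersurfaces must intersect in codimension exactly $n-1$. An algebraic complete intersection does not guarantee this; the paper explicitly warns of this in \S\ref{sec:ci}. For an arbitrary choice of $n-1$ independent equations from among the $\binom{n+1}{3}$ specialized relations, the intersection of their tropicalizations near $\beta$ can be strictly larger than the tropical line $\trop(L_P)$ (equivalently, the chosen equations may fail to be a tropical basis). To close the gap you would need to show that, for each $\beta\in\trop(L_P)$, there is a careful choice of $n-1$ equations whose tropicalizations meet properly near $\beta$ --- a nontrivial claim which you do not argue, and which is precisely what the tropical basis result encapsulates. (Alternatively, one can close it by invoking that $\trop(L_P)$ is determined by the valuations of the Pl\"ucker coordinates via the tropical Pl\"ucker/Speyer--Sturmfels description, \cite[Theorem 3.8]{ss}, as the paper itself does later in the proof of Lemma \ref{lemma:bundle}.) As written, the Osserman--Payne step is not justified.
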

    \begin{proof}
    	We begin with $X^*$. By our choice of $\cJ$, $\G(X)$ has non-trivial intersection with the dense torus of $\PP(\cJ)$.  
    Since tropicalization commutes with monomial maps, 
    \[
    	\trop(X^*)=\trop (\pi (\PP(\cQ^*_{\G(X)})))=(\trop \pi)\left(\trop (\PP(\cQ^*_{\G(X)}))\right)
    \]
    where $\pi:\G(X)\times (\PP^n)^*\to (\PP^n)^*$ is the projection to the second factor, and $\trop(\pi)$ is the tropicalization of this map (which is also just a projection).
    Hence, it remains to show that $\trop (\PP(\cQ^*_{\G(X)}))$ has the desired form.
    
    The $n+1$ tropical hypersurfaces stated in the theorem are just the tropicalizations of the hypersurfaces cut out by the equations \eqref{eqn:dual} (specialized to the case $m=1$). As noted above, these are the equations cutting out the dual quotient bundle. On the other hand, for any fixed values of Pl\"ucker coordinates in $\G(X)$, the equations \ref{eqn:dual} form a tropical basis \cite[Proposition 1.6]{sturmfels-polytopes}.
    It follows that 	$\trop (\PP(\cQ^*_{\G(X)}))$ is the intersection of $\trop(\G(X))$ with the $n+1$ hypersurfaces in the statement of the theorem.
    
    The argument for $\tau(X)$ is identical, except that we are instead looking at $\trop (\PP(\cS_{\G(X)}))$ and the equations \eqref{eqn:taut}. These equations again form a tropical basis and the claim follows.
    \end{proof}
    \begin{rem}
	    The equations \eqref{eqn:dual} may also be used to obtain similar descriptions of $\trop(X^*)$ and $\trop(\tau(X))$ for varieties of dimension larger than one. We only state the result for curves in Theorem \ref{thm:dual} to keep notation simpler, and since this is the case of interest for this paper. 
    \end{rem}

    Since we are able to effectively compute $\trop(\G(X))$ for curves satisfying the hypotheses of Theorem \ref{thm:alltangents} (see also Remark \ref{rem:GX}), we are able to effectively compute $\trop(X^*)$ and $\trop(\tau(X))$ as well. We will illustrate this with our example of a space curve, Example \ref{ex:running}. For the plane curve in Example \ref{ex:runningP}, $\trop(X^*)$ may simply be identified with $\trop(\G(X))$, and $\trop(\tau(X))$ is the entire tropical plane.
    
    \begin{ex}[Continuation of Example \ref{ex:running} (a curve in $\PP^3$)]\label{ex:rundt}
    	We consider $\trop (\tau(X))$ and $\trop(X^*)$ for $X$ as in Example \ref{ex:running}. We will only compute some pieces of them here, and will finish the computation in Example \ref{ex:runningtrop}.
    
    	We first consider the vertex $V_3=(3,0,0)$. We know the possible tropical tangents from Example \ref{ex:simult}. Theorem \ref{thm:dual} allows us to compute the contributions to $\trop(X^*)$ and $\trop(\tau(X))$. Using the notation of Table \ref{table:vertices}, we record the top-dimensional contribution of each kind of tropical tangent at $V_3$ to $\trop(X^*)$ in Table \ref{table:rundt}.
    	In each line of the table, if $s_{ij}$ is not mentioned, then we assume it to be zero. Combining these contributions, we obtain a total top-dimensional contribution to $\trop(X^*)$ consisting of the union of $-V_3+\RR_{\geq 0}\cdot e_i+\RR_{\geq 0}\cdot e_j$, where $0\leq i < j \leq 3$.
    In a similar fashion, one obtains that the total top-dimensional contribution to $\trop(\tau(X))$ is the union of $V_3-3(e_2+e_3)+\RR_{\geq 0}\cdot e_i+\RR_{\geq 0}\cdot e_j$, where $0\leq i < j \leq 3$.
    
    The top-dimensional contributions of the edges $E_2$, $E_6$, and $E_7$ to $\trop(X^*)$ and $\trop(\tau(X))$ are respectively listed in Tables \ref{table:rundt2} and \ref{table:rundt3}.
    We will describe the other contributions to $\trop(X^*)$ and $\trop(\tau(X))$ in Example \ref{ex:runningtrop}.
    \end{ex}
    \begin{table}
    \begin{tabular}{l l }
    Tropical tangents at $V_3$ & Top-dimensional contributions to $\trop(X^*)$\\
    	\toprule
    	$s_{23}\in [0,\infty)$ & $-V_3+\RR_{\geq 0}\cdot e_0+\RR_{\geq 0}\cdot e_1$\\
    		\addlinespace
    	$s_{01}\in [0,\infty)$ & $-V_3+\RR_{\geq 0}\cdot e_2+\RR_{\geq 0}\cdot e_3$\\
    		\addlinespace
    		$s_{13}=s_{03}\in[0,3]$ & $\conv\{-V_3,-V_3+3e_2+3e_3\}+\RR_{\geq 0}\cdot e_2$, \\
     &$\conv\{-V_3,-V_3+3e_2\}+\RR_{\geq 0}\cdot e_i$, $i=0,1$\\
    \addlinespace		
     $s_{12}=s_{02}\in[0,3]$ & $\conv\{-V_3,-V_3+3e_2+3e_3\}+\RR_{\geq 0}\cdot e_3$, \\
     &$\conv\{-V_3,-V_3+3e_3\}+\RR_{\geq 0}\cdot e_i$, $i=0,1$\\
     \addlinespace
     $s_{13}=3,s_{03}\in[3,\infty)$& $-V_3+3e_2+\RR_{\geq 0}\cdot e_1+\RR_{\geq 0}\cdot e_2$\\
     \addlinespace
     $s_{03}=3,s_{13}\in[3,\infty)$& $-V_3+3e_2+\RR_{\geq 0}\cdot e_0+\RR_{\geq 0}\cdot e_2$\\
     \addlinespace
     $s_{12}=3,s_{02}\in[3,\infty)$& $-V_3+3e_3+\RR_{\geq 0}\cdot e_1+\RR_{\geq 0}\cdot e_3$\\
     \addlinespace
     $s_{02}=3,s_{12}\in[3,\infty)$& $-V_3+3e_3+\RR_{\geq 0}\cdot e_0+\RR_{\geq 0}\cdot e_3$\\
    	\bottomrule
    	\end{tabular}
    	\caption{$\trop(X^*)$ from $V_3$ in Example \ref{ex:running} and \ref{ex:rundt}}\label{table:rundt}
    \end{table}
    
    \begin{table}
    \begin{tabular}{l l }
    Cell of $\trop(X)$ & Top-dimensional contributions to $\trop(X^*)$\\
    \toprule
    $E_2$&$-E_2+\RR_{\geq 0}\cdot e_i,\qquad i=0,\ldots,3$\\
    \addlinespace
    $E_6$& $-E_6+\RR_{\geq 0}\cdot e_i\qquad i=0,1$;\\
    	&$\conv\{-V_3,-V_3+3(e_2+e_3)\}-E_6+\RR_{\geq 0}\cdot (-e_2)$;\\
    	&$-V_3+3(e_2+e_3)+\RR_{\geq 0}\cdot (-e_2)+\RR_{\geq 0} \cdot e_3$\\
    	\addlinespace
    $E_7$ & $-E_7+\RR_{\geq 0}\cdot e_i\qquad i=0,1$;\\
    &$\conv\{-V_3,-V_3+3(e_2+e_3)\}-E_7+\RR_{\geq 0}\cdot (6e_2+5e_3)$;\\
    	&$-V_3+3(e_2+e_3)+\RR_{\geq 0}\cdot (6e_2+5e_3)+\RR_{\geq 0} \cdot e_3$\\
    \bottomrule
    \end{tabular}
    	\caption{$\trop(X^*)$ in Example \ref{ex:running} and \ref{ex:rundt}}\label{table:rundt2}
    \end{table}
    
    \begin{table}
    \begin{tabular}{l l }
    Cell of $\trop(X)$ & Top-dimensional contributions to $\trop(\tau(X))$\\
    \toprule
    $E_2$&$\conv \{E_2,V_3-3(e_2+e_3)\}$;\\
    &$E_2+\RR_{\geq 0}\cdot e_i\qquad i=2,3$;\\
    &$\conv\{V_3,V_3-3(e_2+e_3)\}+\RR_{\geq 0}\cdot e_i\qquad i=0,1$\\
    	\addlinespace
    $E_6$&$V_3+\RR_{\geq 0}\cdot e_2+\RR_{\geq 0}\cdot e_3$;\\
    	&$\conv\{V_3,V_3-3(e_2+e_3)\}+\RR_{\geq 0}\cdot e_2$;\\
    	&$V_3-3(e_2+e_3)+\RR_{\geq 0}\cdot e_2+\RR_{\geq 0}\cdot e_i,\qquad i=0,1$\\
    	\addlinespace
    $E_7$&$E_7+\RR_{\geq 0}\cdot e_i^*,\qquad i=2,3$;\\
    	&$V_3-3(e_2+e_3)+\RR_{\geq 0}\cdot (-6e_2-5e_3)+\RR_{\geq 0} \cdot e_i,\qquad i=0,1$\\
    	\bottomrule
    \end{tabular}
    	\caption{$\trop(\tau(X))$ in Example \ref{ex:running} and \ref{ex:rundt}}\label{table:rundt3}
    \end{table}
    
    \subsection{Special Cases}
    Throughout this subsection, we continue using the notation of Section~\ref{sec:notation}.
    	While $\trop(X^*)$ and $\trop(\tau(X))$ can be rather complicated in general, there are some pieces that are especially easy to describe. 
    	\begin{prop}\label{prop:bergman}
		Assume that $X$ satisfies the hypotheses of \S \ref{sec:setup} and Assumption \ref{ass:setup} and fix an edge $E$ of $\trop(X)$. Suppose that for each $J\subset \{0,\ldots,n\}$ of size $2$, either $\langle E \rangle \cap \langle \overline J \rangle = \{0\}$ or $\langle \trop(X)\rangle \subset \langle \overline J \rangle$.
    		  Let $L\subset (\KK^{n+1})^*$ be the rowspan of $\bv$ (viewed as an $(n-1)\times (n+1)$ matrix with entries in $\KK$) and $L^\perp\subset (\KK^{n+1})$ its orthogonal complement.
    In this case, the contribution of $E$ to $\trop(X^*)$ consists of 
    	\[
    -E+\trop(\PP(L))
    	\]
    and the contribution of $E$ to $\trop(\tau(X))$ consists of 
    	\[
    E+\trop(\PP(L^\perp)).
    	\]
    \end{prop}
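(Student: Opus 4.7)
The plan is to combine Theorem~\ref{thm:dual} with the description of tropical tangents furnished by Theorem~\ref{thm:genericrestate}(1), and to identify the fibers of the tautological bundles $\cS$ and $\cQ^*$ over these tangents with torus translates of $\PP(L^\perp)$ and $\PP(L)$.

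I would first apply Theorem~\ref{thm:genericrestate}(1) to describe the tropical tangent at a point $\alpha$ in the relative interior of $E$. By Lemma~\ref{lemma:edge}, $\Delta_J(\bv)=0$ if and only if $\langle E\rangle\subset\langle\overline J\rangle$; under the proposition's hypothesis this occurs precisely when $J\in\cJ$. Working in the reduced ambient projective space where the Pl\"ucker coordinates $p_J$ for $J\in\cJ$ have been eliminated (cf.~\S\ref{sec:setup}), the hypothesis of Theorem~\ref{thm:genericrestate}(1) is satisfied, producing a unique tropical tangent $\beta$ with $\beta_J=-\sum_{i\notin J}\alpha_i$ for $J\notin\cJ$.

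The crucial step is matching $\beta$ with the tropical Pl\"ucker vector of a suitable torus translate of $L^\perp$. Via the duality between Pl\"ucker coordinates of a subspace and its orthogonal complement (cf.~\eqref{eqn:dualp}), the Pl\"ucker coordinates of $L^\perp=\ker(\bv)\subset\KK^{n+1}$ agree projectively with the integers $(\Delta_J(\bv))_{|J|=2}$, which have valuation $0$ for $J\notin\cJ$ and vanish for $J\in\cJ$. Under the diagonal torus action by $\mu\in(\KK^*)^{n+1}$ with $\val(\mu_i)=\alpha_i$, these transform as $p_J(\mu\cdot L^\perp)=(\prod_{i\in J}\mu_i)\,p_J(L^\perp)$, yielding tropical values $\alpha_i+\alpha_j$ for $J=\{i,j\}\notin\cJ$. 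This vector agrees projectively with $\beta$, the two differing only by the overall constant shift $-\sum_k\alpha_k$. Since a tropical linear space is determined by its tropical Pl\"ucker vector, we conclude $\trop(\PP(T_PX))=\alpha+\trop(\PP(L^\perp))$ for any $P$ tropicalizing to $\alpha$, and an analogous duality argument yields $\trop(\PP((T_PX)^\perp))=-\alpha+\trop(\PP(L))$.

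Finally, Theorem~\ref{thm:dual} identifies the fibers over $\alpha$ of the projections defining $\trop(\tau(X))$ and $\trop(X^*)$ with $\trop(\PP(T_PX))$ and $\trop(\PP((T_PX)^\perp))$ respectively, so taking the union over $\alpha$ in the relative interior of $E$ and passing to closure gives the stated Minkowski sums $E+\trop(\PP(L^\perp))$ and $-E+\trop(\PP(L))$. The main obstacle will be handling the case $\cJ\neq\emptyset$ cleanly: one must verify that Theorem~\ref{thm:genericrestate}(1) still applies after eliminating the globally vanishing Pl\"ucker coordinates, and that matching tropical Pl\"ucker vectors still forces equality of tropical linear spaces when a common set of Pl\"ucker coordinates is identically infinite on both sides.
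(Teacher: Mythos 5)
Your proposal takes essentially the same route as the paper's proof. Both arguments reduce to the observation that, for $\alpha$ in the relative interior of $E$, the valuation of the $J$-th Pl\"ucker coordinate of the tangent line is $-\sum_{j\notin J}\alpha_j$ for $J\notin\cJ$ and $\infty$ for $J\in\cJ$, and that this tropical Pl\"ucker vector agrees (up to the lineality direction $(1,\ldots,1)$) with that of the torus translate of $L^\perp$ by any point tropicalizing to $\alpha$, with the dual computation giving $L$ for $\trop(X^*)$. The paper arrives at the valuation formula via Lemma~\ref{lemma:edge} and the no-cancellation result, then packages the translation by working with $\rowspan(\bv\cdot P^{-1})$ directly; you invoke Theorem~\ref{thm:genericrestate}(1) and then make the torus action explicit with $p_J(\mu\cdot L^\perp)=(\prod_{i\in J}\mu_i)p_J(L^\perp)$ --- the same computation in a slightly different dress.

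You are right to flag the $\cJ\neq\emptyset$ case: as literally stated, Theorem~\ref{thm:genericrestate}(1) requires $\langle E\rangle\not\subseteq\langle\overline I\rangle$ for every $|I|=2$, which fails precisely when $\cJ\neq\emptyset$. But this is not a real gap. The proposition's hypothesis still guarantees $\nu_J=0$ for every $J\notin\cJ$ by Lemma~\ref{lemma:edge}, and Proposition~\ref{prop:nocancel} then gives $\val(z_J)=0$ for those $J$, which is exactly the ingredient used to prove Theorem~\ref{thm:genericrestate}(1). For $J\in\cJ$ the Pl\"ucker coordinate is identically zero on $\G(X)$ by Theorem~\ref{thm:gausszero}, so there is nothing to compute. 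Thus your argument goes through; you just need to cite the underlying lemmas rather than Theorem~\ref{thm:genericrestate}(1) itself --- which is precisely what the paper does. The appeal to \cite[Theorem 3.8]{ss} (a tropical linear space is determined by its tropical Pl\"ucker vector) is the correct tool to pass from matching Pl\"ucker valuations to matching tropicalizations, and it works unchanged in the presence of identically-vanishing coordinates.
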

    \begin{proof}
    	Let $\cJ$ consist of those $J\subset \{0,\ldots,n\}$ of size $2$ such that the Pl\"ucker coordinate $p_J$ vanishes on $\G(X)$. By Theorem \ref{thm:gausszero}, this is exactly those $J$ such that $\langle \trop(X)\rangle \subset \langle \overline J \rangle$.
    	By Lemma \ref{lemma:edge},  the assumption  $\langle E \rangle \cap \langle \overline J \rangle= \{0\}$ guarantees that $\nu_{J}=0$ for all $J$ such that $ J\notin \cJ$.
    	For any $\alpha\in E$ there is a unique tropical tangent by Proposition \ref{prop:cancel}. This has Pl\"ucker coordinates $0$ for $J\in \cJ$, and satisfies
    \begin{equation}\label{eqn:bergman}
    	\val(p_{J})=\val(q_{J})=-\sum_{j\notin J} \alpha_j,\qquad  J\notin \cJ.
    \end{equation}
    For $P\in X$ tropicalizing to $\alpha$, let $\bv\cdot P^{-1}$ denote the matrix obtained by multiplying the $i$th column of $\bv$ with $\frac{1}{x_i(P)}$ (after choosing homogeneous coordinates for $P$). The linear space $(\rowspan\bv\cdot P^{-1})^\perp$ has Pl\"ucker coordinates $p_{ J}$ that vanish for $ J\in \cJ$, and satisfy \eqref{eqn:bergman}.
    Hence, the tropicalization of the projectivization of $(\rowspan\bv\cdot P^{-1})^\perp$  agrees with the contribution of $\alpha$ to $\trop(\tau(X))$.
    Similarly, the tropicalization of the projectivization of $\rowspan\bv\cdot P^{-1}$   agrees with the contribution of $\alpha$ to $\trop(X^*)$.
    
    We complete the proof with the straightforward observations that
    \begin{align*}
    \trop(\PP((\rowspan\bv\cdot P^{-1})^\perp))=
    \trop(\PP((\rowspan\bv)^\perp\cdot P))\\= \trop(\PP((\rowspan\bv)^\perp))+\trop(P)=\trop(\PP(L^\perp))+\alpha
    \end{align*}
    and
    \begin{align*}
    \trop(\PP(\rowspan\bv\cdot P^{-1}))= \trop(\PP(\rowspan\bv))-\trop(P)=\trop(\PP(L^\perp))-\alpha.
    \end{align*}
    
    \end{proof}
    
    \noindent The support of the tropical varieties $\trop(\PP(L))$ and $\trop(\PP(L^\perp))$ is the same as respectively the Bergman and co-Bergman fans of $\bv$, see e.g.~\cite[\S9.3]{sturmfels-poly}. This contribution to $\trop(X^*)$ is very similar to the description of the tropical dual variety to a toric variety, see \cite[Theorem 1.1]{sturmfels}.

    We call a vertex $\alpha\in\trop(X)$ \emph{tame} if the affine tangent plane $\Lambda$ of $\trop(X)$ at $\alpha$ does not satisfy $\langle \Lambda \rangle \subseteq \langle \overline J \rangle$ for any $J\subset \{0,\ldots,n\}$, $|J|=2$. The contribution to $\trop(X^*)$ of tame vertices is especially simple:
    \begin{prop}\label{prop:vertcontrib}
	    Assume that $X$ satisfies the hypotheses of \S \ref{sec:setup} and Assumption \ref{ass:setup} and $X_1,\ldots,X_{n-1}$ have very general valuations. Fix a tame vertex $\alpha$ of $\trop(X)$. The top-dimensional contribution of $\alpha$ to $\trop(X^*)$ consists of the union of 
    \[
    		-\alpha+\sum_{i\notin J}\RR_{\geq 0}\cdot e_i,\qquad 
    	\]
    	over all $J\subset \{0,\ldots,n\}$, $|J|=2$ such that for every edge $E$ adjacent to $\alpha$, $\langle E \rangle \cap \langle \overline J\rangle = \{0\}$.
    Furthermore, any tropical hyperplane corresponding to a generic point in one of the above sets contains a unique tropical tangent.
    \end{prop}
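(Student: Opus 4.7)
Since $\alpha$ is tame, Lemma \ref{lemma:vert}(2) gives $\nu_J=0$ for every $|J|=2$, so any tropical tangent $\beta$ at $\alpha$ has $\beta_I=\lambda_I-\sum_{j\notin I}\alpha_j$ with $\lambda_I=\val(z_I)\geq 0$. Combining Lemma \ref{lemma:vert}(1) with Propositions \ref{prop:nocancel} and \ref{prop:cancel}(1), we see that $\lambda_J>0$ is achievable precisely when $J$ is valid in the sense of the proposition (so that $\Delta_J(\bv)$ and $\Delta_J(\bv')$ are both nonzero and distinct), while $\lambda_J=0$ is forced otherwise.

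Next I would fix a valid $J=\{i^\ast,j^\ast\}$ and a generic $\gamma$ in $C_J:=-\alpha+\sum_{k\notin J}\RR_{\geq 0}e_k$, and construct a tropical tangent $\beta$ at $\alpha$ such that $(\beta,\gamma)$ satisfies the equations of Theorem \ref{thm:dual}. After normalizing so that $\tilde\gamma_i:=\alpha_i+\gamma_i$ vanishes on $J$ and equals some $t_i>0$ for $i\notin J$, the $j$-th equation reads that $\min_{i\neq j}(\lambda_{\{i,j\}}+\tilde\gamma_i)$ is achieved at least twice. For $j\notin J$ this holds automatically because the minimum is already attained at both elements of $J$, where $\tilde\gamma_i$ vanishes. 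For $j\in J$ the equation forces $\lambda_J>0$ together with those $\lambda_I>0$ tied to $J$ by Proposition \ref{prop:delta}; for generic $\gamma$ these constraints pin down the shifts $\lambda_I$ uniquely, and Proposition \ref{prop:algorithm} confirms that the resulting $\beta(\gamma)$ is a valid tropical tangent at $\alpha$. This establishes the inclusion $C_J\subseteq\trop(X^*)$ and the uniqueness of the tangent corresponding to generic $\gamma$.

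For the reverse direction, I would argue that no other $(n-1)$-dimensional pieces can arise from $\alpha$. The fiber over any single tangent $\beta$ in $(\TP)^*$ has dimension at most $n-2$; for the base tangent $\beta^0$ (all $\lambda_I=0$) a direct computation shows this fiber is the locus where $\min_i(\alpha_i+\gamma_i)$ is achieved by at least three indices, which is of codimension two in the tropical hyperplane at $-\alpha$. Hence $(n-1)$-dimensional pieces arise only from varying $\beta$ over $1$-parameter families of tangents at $\alpha$, and by Proposition \ref{prop:delta} such families are indexed by the equivalence classes of valid $J$'s under the determinant-ratio relation. By the construction above, each family maps onto the union of $C_J$ as $J$ ranges over its class. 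The main obstacle will be the simultaneous-cancellation bookkeeping: when several $\lambda_I$'s must be increased in tandem (as with the tied family at $V_1$ in Example \ref{ex:simult}), one must carefully verify that the resulting image is the full union of cones $C_J$ and not merely a lower-dimensional subset.
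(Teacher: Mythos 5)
Your outline follows the same general strategy as the paper (intersect $\trop(\G(X))$ with the hypersurfaces from Theorem \ref{thm:dual}, decompose by cones $C_J$, show each valid $J$ contributes and that invalid ones do not), but two of your key steps are not resolved and in fact conceal the main technical content of the paper's argument.

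First, the claim that for $j\notin J$ the incidence equation ``holds automatically because the minimum is already attained at both elements of $J$'' is only true \emph{after} one has carefully constructed the tropical tangent. The two terms in question are $\lambda_{\{i^*,j\}}$ and $\lambda_{\{j^*,j\}}$ (since $\tilde\gamma_{i^*}=\tilde\gamma_{j^*}=0$), and there is no a priori reason these are equal, let alone minimal. The paper establishes this by (i) reordering indices using the quantities $\omega_{(n-1)n}(i,n)$ from Lemma \ref{lemma:omega}, (ii) inductively defining $\delta_{in}=\min(\{\delta_{jn}+\lambda_j\}_{j<i}\cup\{\omega_{(n-1)n}(i,n)\})$, and (iii) then invoking the tropical Pl\"ucker relations together with the structure of $\mcH(\beta)$, $\mcH_{\max}(\beta)$ to conclude $\delta_{i(n-1)}=\delta_{in}\leq\delta_{ij}+\lambda_j$. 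You do not reference Lemma \ref{lemma:omega} at all, and Proposition \ref{prop:algorithm} by itself only tells you how to compute one $\val(z_J)$ from another --- it does not by itself produce a globally consistent tropical Pl\"ucker vector. This is precisely the ``simultaneous-cancellation bookkeeping'' you flag as the main obstacle; the paper resolves it via Lemma \ref{lemma:omega}, and your proof needs that mechanism or an equivalent.

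Second, for the reverse direction the paper does not use a fiber-dimension argument. It instead restricts to the dense open set of $\gamma$ with $\lambda_i>0$ and $\lambda_i-\lambda_j\notin\Omega-\Omega$ (again using Lemma \ref{lemma:omega}(2)) and shows directly that the minimum in the incidence equation is forced to be attained by a term with maximal $\delta_{in}$, hence $\delta_{(n-1)n}$ is maximal and strictly positive, whence $J$ is valid. Your dimension count is plausible in spirit (the fiber over a fixed tangent is a tropical linear space of dimension $n-2$), but your description of the fiber over $\beta^0$ as ``the locus where $\min_i(\alpha_i+\gamma_i)$ is achieved by at least three indices'' is not quite the right set --- the incidence conditions must hold for all $j$ simultaneously, and passing from a pointwise fiber bound to the claim that only one-parameter families of tangents produce top-dimensional images requires an argument you have not supplied. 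The $\Omega$-genericity route used in the paper is more direct, and it is also what makes the uniqueness statement (your ``pin down the shifts uniquely'') provable rather than merely asserted.
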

    Before proving this, we state a description of the contribution for $\trop(\tau(X))$. 
    For any tropical tangent $\beta$ of a vertex $\alpha\in\trop(X)$, let $\beta_{ij}$ be tropical Pl\"ucker coordinates. We may always choose these so that 
    $\beta_{ij}\geq \alpha_i+\alpha_j$, with equality holding for at least one $i,j$.
    We then set 
    \[\delta_{ij}=\beta_{ij}-(\alpha_i+\alpha_j).\]
    In the setting of the following proposition, we will see below that $\delta_{ij}=\val(z_J)$ for $J=\{ij\}$, see \eqref{eqn:pz}.
    
    \begin{prop}\label{prop:vertcontribtau}
	    Assume that $X$ satisfies the hypotheses of \S \ref{sec:setup} and Assumption \ref{ass:setup} and $X_1,\ldots,X_{n-1}$ have very general valuations. Fix a tame vertex $\alpha$ of $\trop(X)$. The contribution of $\alpha$ to $\trop(\tau(X))$ consists of the union over all tropical tangents $\beta$ to $\alpha$ of 
    \[
    \alpha+	\sum_{j\neq i} \delta_{ij}e_j+\max_{j\neq i} \delta_{ij}e_i+\RR_{\geq 0} e_i,\qquad i=0,\ldots,n.
    \]
    \end{prop}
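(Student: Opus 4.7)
The plan is to combine Theorem~\ref{thm:dual} with an explicit verification via tropical Pl\"ucker relations.

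First, by Theorem~\ref{thm:dual} the contribution of a fixed tropical tangent $\beta$ at $\alpha$ to $\trop(\tau(X))$ is exactly the tropical line in $\TP$ with tropical Pl\"ucker coordinates $\beta_{ij}$: the locus of $y\in\TP$ such that for every triple $\{i,j,k\}$ the minimum of $\beta_{ij}+y_k$, $\beta_{ik}+y_j$, $\beta_{jk}+y_i$ is attained at least twice. Substituting $y=\alpha+y'$ and $\delta_{ij}=\beta_{ij}-\alpha_i-\alpha_j$ (which equals $\val(z_{ij})$ by \eqref{eqn:pz}, and is non-negative since tameness and Lemma~\ref{lemma:vert} force $\nu_J=0$ for all $J$), the defining equations become: for every triple $\{i,j,k\}$, the minimum of $\delta_{ij}+y'_k$, $\delta_{ik}+y'_j$, $\delta_{jk}+y'_i$ is attained at least twice.

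For the forward inclusion I would fix $\beta$ and $i$, and check that $y'=y^{(i)}+t\cdot e_i$ (with $y^{(i)}_j=\delta_{ij}$ for $j\neq i$ and $y^{(i)}_i=M_i:=\max_{k\neq i}\delta_{ik}$) satisfies the tropical line equations. For a triple $\{a,b,c\}$ not containing $i$ the three values become $\delta_{ab}+\delta_{ic}$, $\delta_{ac}+\delta_{ib}$, $\delta_{bc}+\delta_{ia}$, and the minimum is attained at least twice by the tropical Pl\"ucker relation applied to $\{a,b,c,i\}$, valid since $\beta\in\trop(\G(X))\subseteq\trop(\Gr(2,n+1))$. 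For a triple $\{a,b,i\}$ containing $i$, the values are $\delta_{ab}+M_i+t$, $\delta_{ai}+\delta_{ib}$, and $\delta_{ib}+\delta_{ia}$; the last two are equal, so it suffices to check $\delta_{ai}+\delta_{ib}\leq \delta_{ab}+M_i$.

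The main obstacle is this last inequality. The strategy is to pick $c$ with $\delta_{ic}=M_i$ and use the tropical Pl\"ucker relation for $\{a,b,c,i\}$, which yields that the minimum of $\delta_{ab}+\delta_{ic}=\delta_{ab}+M_i$, $\delta_{ac}+\delta_{ib}$, and $\delta_{ai}+\delta_{bc}$ is attained at least twice; then one must combine this with the constraints on which $\delta_J$ can be simultaneously positive at a tame vertex with very general valuations (Proposition~\ref{prop:simultv}), together with the explicit relations imposed by Propositions~\ref{prop:delta} and~\ref{prop:algorithm}, to rule out the failure case $\delta_{ai}+\delta_{ib}>\delta_{ab}+M_i$.

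For the reverse inclusion, given a point $y$ in the contribution I would argue that $y$ lies on the tropical line of some $\beta$, and that every such point lies on one of the claimed rays for some (possibly different) tropical tangent $\beta'$. Points on the unbounded ray of the tropical line of $\beta$ in direction $e_i$ are of the form $\alpha+y^{(i)}(\beta)+t\cdot e_i$ directly. Points on a bounded edge of the tropical line of $\beta$ are covered by deforming $\beta$ through the family of tropical tangents described in Theorem~\ref{thm:alltangents}: adjusting the free parameters $\lambda_I$ downward moves the corresponding vertex $\alpha+y^{(i)}(\beta')$ along the bounded edge, so for every point on the bounded edge there is a $\beta'$ in the tangent family with the point as a ray-vertex. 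This completes the reverse inclusion.
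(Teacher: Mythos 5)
Your outline follows the same two-step structure as the paper's proof: (1) show that the prescribed rays lie on the tropical line of $\beta$; (2) show that the bounded edges of these tropical lines are swept out by varying the tangent $\beta$ through the family at $\alpha$. The reverse inclusion argument — decreasing the free parameter to slide the relevant vertex along the bounded edge — is essentially the paper's. So the overall route is the same.

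However, there is a genuine gap in your forward direction, and your proposed repair aims at the wrong tool. You reduce correctly to the inequality $\delta_{ai}+\delta_{ib}\leq \delta_{ab}+M_i$ (equivalently $\delta_{ij}+\delta_{ik}-\delta_{jk}\leq M_i$ for all $j,k\neq i$), and then propose to "rule out the failure case" via Propositions~\ref{prop:simultv}, \ref{prop:delta}, and~\ref{prop:algorithm}. That machinery controls \emph{which} $\delta_{ij}$ can be simultaneously positive and their precise values, but it is not what's needed here, and pursuing it would be laborious. The inequality actually follows cleanly from the material the paper establishes just before the proposition: there is a proper subset $\mcH(\beta)\subsetneq\{0,\ldots,n\}$ with $\delta_{ij}>0$ iff $i,j\in\mcH(\beta)$, and for any three indices in $\mcH(\beta)$ the minimum of $\delta_{ij},\delta_{ik},\delta_{jk}$ is attained twice (tropical Pl\"ucker relation applied with a fourth index outside $\mcH(\beta)$). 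From the case analysis on whether $i,j,k$ lie in $\mcH(\beta)$ one obtains $\delta_{ij}+\delta_{ik}-\delta_{jk}\leq M_i$ in all cases. This also gives equality for some pair (take $j\in\mcH_{\max}(\beta)$ and $k\notin\mcH(\beta)$), confirming that the vertex of the ray in direction $e_i$ has exactly the claimed $i$-th coordinate — which is what the paper asserts is "straightforward to check." So the gap is real but recoverable; the fix uses the $\mcH(\beta)$/tropical Pl\"ucker discussion, not the simultaneous-cancellation propositions.
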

    
    We will begin the proofs of Propositions \ref{prop:vertcontrib} and \ref{prop:vertcontribtau} together. Throughout the rest of this section, we fix a tame vertex $\alpha$. By Lemma \ref{lemma:vert}, this guarantees that $\nu_J(\alpha)=0$ for all $J$. We let $\Lambda$ be the affine tangent plane of $\trop(X)$ at $\alpha$.
    
    \begin{lemma}\label{lemma:notall}
    	For any point $p\in X$ tropicalizing to $\alpha$, there exists $J\subset \{0,\ldots,n\}$, $|J|=2$ such that $\val(z_J)=0$.
    \end{lemma}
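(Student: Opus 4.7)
The plan is to analyze the lowest order parts of the values $z_J(p)$ and to derive a contradiction from the assumption that $\val(z_J(p))>0$ for every $J$. First, the tameness of $\alpha$ together with Lemma \ref{lemma:vert} forces $\nu_J(\alpha)=0$ for every $J$, and hence $\val(z_J(p))\geq 0$ by \eqref{eqn:pz} and \eqref{eqn:min}. Among the tuples $\bw\in\A$, only $\bv$ and $\bv'$ achieve $\nu_\bw(\alpha)=0$, so from \eqref{eqn:zJ} the lowest order part of $z_J(p)$ equals, up to an overall sign,
\[
\Delta_J(\bv)\,A+\Delta_J(\bv')\,B,
\]
where $A,B\in\FF^*$ are the lowest order parts of $c_{i_0,\bv_{i_0}}x^{\bv_{i_0}}|_p$ and $c_{i_0,\bv'_{i_0}}x^{\bv'_{i_0}}|_p$, respectively. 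The equations $f_i(p)=0$ for $i\neq i_0$ pin down the lowest order part of $c_{i,\bv_i}x^{\bv_i}|_p$ to $-1$, and then $f_{i_0}(p)=0$ gives $A+B=-1$.

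Suppose for contradiction that $\val(z_J(p))>0$ for all $J$. Then $\Delta_J(\bv)\,A+\Delta_J(\bv')\,B=0$ for every $J$. Since $(A,B)\in\FF^*\times\FF^*$ is a nonzero vector and each pair $(\Delta_J(\bv),\Delta_J(\bv'))$ is orthogonal to it in $\FF^2$, each such pair is a scalar multiple of $(-B,A)$. Consequently the Pl\"ucker vectors $(\Delta_J(\bv))_J$ and $(\Delta_J(\bv'))_J$ are proportional. Neither can be identically zero, because a rank drop of $\bv$ or $\bv'$ would produce an intersection of max-dim cells of the $\trop(X_i)$ at $\alpha$ of dimension exceeding the $1$-dimensional prediction of the tropical complete intersection assumption. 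Hence $\bv$ and $\bv'$ share the same rowspan in $M\otimes\RR$, and in particular the same kernel in $\RR^{n+1}$. Let $v$ denote the $1$-dimensional image of this common kernel in $\TP$.

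The final contradiction comes from examining the line $\alpha+\RR\cdot v$. Since $v\in\ker\bv=\ker\bv'$, each of the three leading monomials $1$, $c_{i_0,\bv_{i_0}}x^{\bv_{i_0}}$, and $c_{i_0,\bv'_{i_0}}x^{\bv'_{i_0}}$ of $f_{i_0}$ keeps valuation $0$ along this line, and the same holds for the leading monomial of each $f_i$ with $i\neq i_0$. Thus the line lies simultaneously in the codim-$1$ cell $E_{i_0}\subset\trop(X_{i_0})$ through $\alpha$ and in the max-dim cell $F_i\subset\trop(X_i)$ through $\alpha$ for every $i\neq i_0$. Therefore $E_{i_0}\cap\bigcap_{i\neq i_0}F_i$ has dimension at least $1$, while Assumption \ref{ass:setup} forces its dimension to be $n-(2+(n-2))=0$, a contradiction. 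The main technical point is verifying that the rowspan coincidence between $\bv$ and $\bv'$ really does place the entire line $\alpha+\RR\cdot v$ inside every cell relevant to $\alpha$, thereby triggering the dimension mismatch.
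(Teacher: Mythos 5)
Your proof is correct, but it takes a genuinely different route from the paper. The paper's proof of Lemma~\ref{lemma:notall} is short and purely combinatorial: assuming $\val(z_J)>0$ for all $J$ means simultaneous cancellation occurs for every pair, so by Proposition~\ref{prop:simultv} (which itself rests on the determinantal Lemma~\ref{lemma:sc}) the space $\langle\Lambda\rangle$ meets $\langle\overline{\{i,j,k\}}\rangle$ non-trivially for all triples; combined with tameness ($\langle\Lambda\rangle\not\subseteq\langle\overline J\rangle$ for $|J|=2$) this forces a single fixed line of $\langle\Lambda\rangle$ to lie in every $\langle\overline{\{i,j\}}\rangle$, whose total intersection is $\{0\}$. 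Your argument instead goes back to the source: from $f_i(p)=0$ you pin down the lowest order parts, observe that only $\bv,\bv'$ contribute minimal-valuation terms to $z_J$, and deduce from the assumed cancellation $\Delta_J(\bv)A+\Delta_J(\bv')B=0$ with $A,B\in\FF^*$ that the Pl\"ucker vectors of $\bv$ and $\bv'$ are proportional. Injectivity of the Pl\"ucker embedding then gives $\rowspan\bv=\rowspan\bv'$, and the common kernel direction $v$ produces a segment $\alpha+(-\epsilon,\epsilon)v$ lying in the codimension-one cell of $\trop(X_{i_0})$ and the maximal cells of the other $\trop(X_i)$ through $\alpha$, violating the tropical complete intersection condition at $\alpha$. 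This bypasses Propositions~\ref{prop:delta} and~\ref{prop:simultv} and Lemma~\ref{lemma:sc} entirely, and it does not visibly invoke the very-general-valuations hypothesis, so it is in that sense more elementary and possibly slightly more general. Two small imprecisions worth noting: the contradiction comes from a \emph{segment} near $\alpha$, not the entire line $\alpha+\RR v$ (for $|s|$ large other monomials may take over), and the ``rank drop'' clause should be phrased a bit more carefully --- the relevant cells are the top-dimensional cells $F_i\subset\trop(X_i)$, $i\neq i_0$, together with the top-dimensional cell of $\trop(X_{i_0})$ adjacent to the codimension-one cell through $\alpha$; their intersection is the edge $E$ of $\trop(X)$ adjacent to $\alpha$ with $\langle E\rangle=$ image of $\ker\bv$ in $\TP$ (by Lemma~\ref{lemma:edge}), and $\dim\langle E\rangle=1$ forces $\rank\bv=n-1$. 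Neither caveat affects the validity of the argument.
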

    \begin{proof}
    	Suppose for the sake of contradiction that the statement does not hold. Since $\nu_J(\alpha)=0$ for all $J$, this means that we have simultaneous cancellation for all index sets $J$. By applying Proposition \ref{prop:simultv} to some $ {\{i,j\}}$ and $ {\{i,k\}}$, this implies that $\langle \Lambda\rangle \cap \langle \overline {\{i,j,k\}} \rangle\neq \{0\}$. Since by assumption $\Lambda \nsubseteq \langle \overline J \rangle$ for any $J$ with $|J|=2$, we conclude that  $0\neq \langle \Lambda\rangle \cap \langle \overline {\{i,j\}} \rangle$
     is independent of $i,j$. But 
     \[
    	 \bigcap_{i,j}\langle \overline {\{i,j\}} \rangle=\{0\},
     \]
     a contradiction.
    \end{proof}
    
    Consider any tropical tangent $\beta$ of $\alpha$. By the  Lemma~\ref{lemma:notall} and \eqref{eqn:pz}, it follows that $\delta_{ij}$ is the valuation of $z_{ij}$  at any point $P\in X$ with $\trop(\G_X(P))=\beta$.
    
    \begin{lemma}
    	For any tropical tangent $\beta$ at $\alpha$, there exists $\mcH(\beta)\subsetneq \{0,\ldots,n\}$ such that for all $i,j$, $\delta_{ij}>0$ if and only if $i,j\in \mcH(\beta)$.
    \end{lemma}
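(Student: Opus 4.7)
The plan is to handle the trivial case $\delta_{ij} = 0$ for every pair by taking $\mcH(\beta) = \emptyset$, and otherwise to build $\mcH(\beta)$ as a specific level set of an auxiliary vector $u \in \RR^{n+1}$ determined by $\beta$. Assume some pair $\{i_0, j_0\}$ has $\delta_{i_0 j_0} > 0$. Non-colliding valuations (which follow from the very general valuations hypothesis) imply that at a vertex the only cancellative pair is $(\bv, \bv')$, so Proposition \ref{prop:delta} applied to any other pair $\{k, l\}$ with $\delta_{kl} > 0$ produces a common ratio
\[
\rho := \frac{\Delta_{i_0 j_0}(\bv)}{\Delta_{i_0 j_0}(\bv')} = \frac{\Delta_{kl}(\bv)}{\Delta_{kl}(\bv')}.
\]

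Next, I reformulate this ratio condition geometrically. Since $\bv$ and $\bv'$ are $(n-1)\times (n+1)$ matrices of rank $n-1$ whose kernels both contain $\mathds{1}$, I may write $\ker \bv = \vspan\{\mathds{1}, w\}$ and $\ker \bv' = \vspan\{\mathds{1}, w'\}$ for vectors $w, w' \in \RR^{n+1}$. Up to sign and a common nonzero scalar one has $\Delta_{ij}(\bv) = w_i - w_j$ and $\Delta_{ij}(\bv') = w'_i - w'_j$, so setting $u := w - \rho w'$ transforms the equation $\Delta_{ij}(\bv) = \rho \Delta_{ij}(\bv')$ into $u_i = u_j$. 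Define $\mcH(\beta) := \{i : u_i = u_{i_0}\}$; this is a proper subset of $\{0,\ldots,n\}$ by Lemma \ref{lemma:notall}.

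To verify the equivalence, I first use that tameness of $\alpha$ combined with Lemma \ref{lemma:vert} makes the map $i \mapsto (w_i, w'_i)$ injective, and that $\delta_{i_0 j_0} > 0$ forces $\rho \neq 0$. Consequently $u_i = u_j$ implies both $w_i \neq w_j$ and $w'_i \neq w'_j$, so that $\Delta_{ij}(\bv)$ and $\Delta_{ij}(\bv')$ are both nonzero whenever $i, j \in \mcH(\beta)$. The ``if'' direction of Proposition \ref{prop:delta} (with $I = \{i_0, j_0\}$ and $J = \{i, j\}$) then yields $\delta_{ij} > 0$. Conversely, Proposition \ref{prop:delta} applied to any pair with $\delta > 0$ gives $u_i = u_j$.

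The main obstacle is closing the converse direction: I must argue that $u_i = u_j$ actually forces $u_i = u_{i_0}$, i.e., that only one $u$-level set has size $\geq 2$. I anticipate this being the delicate step. My plan is to assume for contradiction the existence of two disjoint pairs $\{i, j\}$ and $\{k, l\}$ with $\delta > 0$ supported on distinct $u$-level sets, invoke Proposition \ref{prop:simultv} (case 2) to force $\langle \Lambda \rangle$ to intersect $\langle e_i + e_j, e_k + e_l\rangle + \langle \overline{\{i,j,k,l\}}\rangle$ nontrivially, and then combine this geometric constraint with tameness and the uniqueness of the linear dependence in $\cS_\alpha$ (Definition \ref{defn:genval}) to obtain the desired contradiction.
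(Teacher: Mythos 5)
Your argument takes a genuinely different route from the paper's, but both hinge on the same delicate step, and your write-up leaves that step unfinished. The paper's own proof is very short: it sets $\mcH(\beta)=\{\ell\}\cup\{i\ |\ \delta_{i\ell}>0\}$ and asserts the biconditional ``follows from Proposition~\ref{prop:simultv}.'' The mechanism making the triangle case work is worth spelling out, since your $u$-level-set picture captures it too: for any three distinct indices $i,j,\ell$, the three pairs $\{i,j\}$, $\{i,\ell\}$, $\{j,\ell\}$ all have $I\cup J=\{i,j,\ell\}$, so by Proposition~\ref{prop:simultv}(1) they are governed by a single intersection condition $\langle\Lambda\rangle\cap\langle\overline{\{i,j,\ell\}}\rangle\neq\{0\}$. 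Consequently any two edges of the ``cancellation graph'' that share a vertex extend to a triangle, i.e.\ the graph is a disjoint union of cliques. Your reformulation with $u=w-\rho w'$ encodes exactly this: $u_i=u_j$ is an equivalence relation, the level sets partition $\{0,\ldots,n\}$, and the graph is a union of cliques (one per level set). Your verification of the forward direction, in particular using tameness through Lemma~\ref{lemma:vert} to rule out the simultaneous vanishing of $\Delta_{ij}(\bv)$ and $\Delta_{ij}(\bv')$, is sound (modulo the $(-1)^{i+j}$-sign and overall-scalar bookkeeping in $\Delta_{ij}(\bv)=\pm c\,(w_j-w_i)$, which cancel in every ratio you use).

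The genuine gap is precisely the one you flag: the lemma asserts a \emph{single} clique, so you must exclude two disjoint non-singleton level sets of $u$. Concretely, if $\delta_{ij}>0$ and $\delta_{k\ell}>0$ with $\{i,j\}\cap\{k,\ell\}=\emptyset$, Proposition~\ref{prop:delta} only gives $\rho_{ij}=\rho_{k\ell}=\rho$, i.e.\ $u_i=u_j$ and $u_k=u_\ell$; it does not give $u_i=u_\ell$. Your proposed route is to invoke Proposition~\ref{prop:simultv}(2) to get $\langle\Lambda\rangle\cap\left(\langle e_i+e_j,e_k+e_\ell\rangle+\langle\overline{\{i,j,k,\ell\}}\rangle\right)\neq\{0\}$ and then appeal to tameness and the uniqueness of the $\QQ$-linear dependence in $\cS_\alpha$, but neither ingredient obviously bears on this: tameness constrains containments $\langle\Lambda\rangle\subseteq\langle\overline J\rangle$ with $|J|=2$, and Definition~\ref{defn:genval}(\ref{defn:genval2}) is a condition on the valuations, not on the exponent matrices $\bv,\bv'$ that determine $w,w'$ and hence the $\rho_{ij}$. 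Until this step is closed, the ``only if'' direction is missing. The paper dispatches this point in a single sentence, so your instinct that it is the delicate step is correct; a complete proof should either supply a transitivity argument showing $\rho_{ij}=\rho_{k\ell}$ for disjoint pairs forces $\rho_{i\ell}=\rho$, or cite a precise statement of Proposition~\ref{prop:simultv} from which that follows.
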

    \begin{proof}
    	If $\delta_{ij}=0$ for every $\{i,j\}$ then there is nothing to prove.
    	  Otherwise, suppose that  $\delta_{k\ell}>0$ for some $k,\ell$. Let $\mcH(\beta)=\{\ell\}\cup\{i\ |\ \delta_{i\ell}>0\}$. It follows from Proposition \ref{prop:simultv} that $\delta_{ij}>0$ if and only if $i,j\in \mcH(\beta)$. Lemma \ref{lemma:notall} shows that $\mcH(\beta)\neq \{0,\ldots,n\}$.
    \end{proof}
    
    Any tropical tangent $\beta$ satisfies the \emph{tropical Pl\"ucker relations} (cf. \cite[Proof of Theorem 3.4]{ss}): the minimum of
    \[
    	\beta_{ij}+\beta_{k\ell},
    	\beta_{ik}+\beta_{j\ell},
    	\beta_{i\ell}+\beta_{jk}
    \]
    is attained at least twice for any distinct $i,j,k,\ell$. Applying this for $i,j,k\in \mcH(\beta)$ and $\ell\notin \mcH(\beta)$, we obtain that the minimum of 
    \[
    	\delta_{ij},\delta_{ik},\delta_{jk}
    \]
    is attained at least twice. It follows that for any tropical tangent, there exists $\mcH_{\max}(\beta)\subseteq\mcH(\beta)$ such that $\delta_{ij}$ is maximal if and only if $i,j\in\mcH_{\max}(\beta)$.
    
    \begin{lemma}\label{lemma:omega}
    	Fix a tame vertex $\alpha$ and $0\leq i<j\leq n$.
    	\begin{enumerate}
    		\item	There exist numbers $\omega_{ij}(k,\ell)\in\RR_{\geq 0}\cup\{\infty\}$ such that for any tropical tangent $\beta$ to $\alpha$ with $\delta_{ij}$ maximal, \[\delta_{k\ell}\leq \omega_{ij}(k,\ell),\] and strict inequality holds only if $\delta_{k\ell}$ is also maximal.
    		\item	There exists a finite subset $\Omega\subset \RR$ such that for any tropical tangent $\beta$ to $\alpha$, $\delta_{ij}\in \Omega$ unless $\delta_{ij}$ is maximal.
    	\end{enumerate}
    		\end{lemma}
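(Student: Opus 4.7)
The strategy is to define $\omega_{ij}(k,\ell)$ algorithmically via Algorithm \ref{alg:zalg}, and then read off both statements from Proposition \ref{prop:algorithm}. Throughout, tameness together with Lemma \ref{lemma:vert} ensures $\nu_{\{i,j\}} = \nu_{\{k,\ell\}} = 0$, and Lemma \ref{lemma:notall} combined with \eqref{eqn:pz} lets me identify $\delta_{ij}$ with $\val(z_{\{i,j\}})$ and $\delta_{k\ell}$ with $\val(z_{\{k,\ell\}})$ at any $P \in X$ lifting $\beta$.

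For part (1), I fix distinct two-element subsets $I = \{i,j\}$ and $J = \{k,\ell\}$ and run Algorithm \ref{alg:zalg} purely abstractly, suppressing the termination condition that depends on $\val(z_I)$. If the sequence $\zeta_0, \zeta_1, \ldots$ eventually produces some $\zeta_m$ whose minimum-valuation monomial $\xi$ is unique, I set $\omega_{ij}(k,\ell) := \val(\phi(\xi))$; otherwise I set $\omega_{ij}(k,\ell) = \infty$. Non-negativity follows inductively from the fact that the valuations of monomials appearing in each $\zeta_m$ stay $\geq 0 = \nu_J$, using that $\nu_{\br_I} = \nu_I = 0$ and $\nu_{\bw} \geq \nu_I$ for the terms introduced by $g \cdot \sum_{\bw} \Delta_I(\bw) y_\bw$. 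Then for any tropical tangent $\beta$ with $\delta_{ij}$ maximal, I compare the two termination conditions in Proposition \ref{prop:algorithm}: if the unique-monomial condition triggers no later than the $\val(z_I)$-bound (equivalently $\omega_{ij}(k,\ell) \leq \delta_{ij}$), the proposition gives $\delta_{k\ell} = \omega_{ij}(k,\ell)$; otherwise $\delta_{k\ell} \geq \delta_{ij}$, and the maximality of $\delta_{ij}$ forces $\delta_{k\ell} = \delta_{ij} < \omega_{ij}(k,\ell)$ and makes $\delta_{k\ell}$ maximal. This yields the claimed inequality and its strict-inequality clause.

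For part (2), I set
\[
\Omega := \bigcup_{\{i,j\} \neq \{k,\ell\}} \{\omega_{k\ell}(i,j) : \omega_{k\ell}(i,j) < \infty\},
\]
which is finite since there are only finitely many pairs of two-element index subsets. Given a tropical tangent $\beta$ at which $\delta_{ij}$ is not maximal, I pick any $\{k,\ell\}$ at which $\delta_{k\ell}$ achieves the maximum and apply part (1) in reverse: $\delta_{ij} \leq \omega_{k\ell}(i,j)$, with strict inequality only if $\delta_{ij}$ is itself maximal. Since it is not, equality must hold, giving $\omega_{k\ell}(i,j) = \delta_{ij} < \infty$ and hence $\delta_{ij} \in \Omega$.

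The main obstacle to spell out carefully will be verifying that the value $\omega_{ij}(k,\ell)$ extracted from the abstract algorithm genuinely agrees with the $\val(\xi)$ produced by Proposition \ref{prop:algorithm}'s unique-monomial conclusion for each specific tangent, independent of the incidental choices permitted in Lemma \ref{lemma:cancel} (such as the choice of the reference monomial $\xi$ at each step). This should be transparent from the fact that Lemma \ref{lemma:cancel}'s explicit construction depends only on the valuations $\val(c_\bw)$ and not on the coefficients themselves, so the recorded sequence of minimum valuations, and the identity of the eventual unique minimum-valuation term as an element of $R(\A)$, are determined purely by the tropical data $\{\A_i, \val(c_{iw})\}$.
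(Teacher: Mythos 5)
Your proposal is correct and takes essentially the same approach as the paper's proof, which is extremely terse — the paper simply asserts that part (1) ``follows directly from Proposition~\ref{prop:algorithm}'' and that part (2) ``follows by taking the union of all $\omega_{ij}(k,\ell)$.'' Your write-up is a faithful expansion of that one-liner: defining $\omega_{ij}(k,\ell)$ by running Algorithm~\ref{alg:zalg} with the $\val(z_I)$-dependent termination criterion suppressed, reading off the dichotomy in Proposition~\ref{prop:algorithm}, and forming $\Omega$ as the finite union of the $\omega_{k\ell}(i,j)$. The bookkeeping you flag at the end (well-definedness of $\omega_{ij}(k,\ell)$ independent of the choices in Lemma~\ref{lemma:cancel}) is indeed the only place where something must be checked, and your observation that the construction depends only on the tropical data is the right resolution. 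One minor point you could make explicit: when $I=\{i,j\}$ is not cancellative (so $\delta_{ij}=0$ always), the algorithm has no cancellative pair $\br_I,\br_I'$ to use, so the abstract run is not literally defined; but in that degenerate case $\delta_{ij}$ is maximal only when all $\delta$'s vanish, and taking $\omega_{ij}(k,\ell)=0$ handles the claim trivially.
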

    \begin{proof}
    	The first claim follows directly from Proposition \ref{prop:algorithm}. The second claim follows by taking the union of all $\omega_{ij}(k,\ell)$ as $i,j,k,\ell$ vary.
    \end{proof}
    
    \begin{proof}[Proof of Proposition \ref{prop:vertcontrib}]
    Any tropical hyperplane containing a tropical tangent to $\alpha$ must intersect $\trop(X)$ at $\alpha$. It follows that the corresponding point in $\trop(X^*)$ must be contained in a set of the form
    \[
    		-\alpha+\sum_{i\notin J}\RR_{\geq 0}\cdot e_i
    	\]
    for some $J$ with $|J|=2$. 
    
    Fix some $J$ with $|J|=2$. After reordering coordinates, we will assume that $J={\{(n-1),n\}}$. Any point  
    $\gamma\in -\alpha+\sum_{i\notin J}\RR_{\geq 0}\cdot e_i$ can be written as 
    
    \[\gamma=-\alpha+\lambda\]
    with $\lambda_i\geq 0$ for all $i$ and $\lambda_{n-1}=\lambda_n=0$. 
    
    For $\gamma$ to be a contribution of $\alpha$ to $\trop(X^*)$, by Theorem \ref{thm:dual} the minimum of
    \begin{equation}\label{eqn:mintwice}
    	\delta_{0n}+\lambda_0,\delta_{1n}+\lambda_1,\ldots,\delta_{(n-2)n}+\lambda_{n-2},\delta_{(n-1)n}+\lambda_{n-1}=\delta_{(n-1)n}
    \end{equation}
    must be attained at least twice. Restrict to the non-empty open set of those $\gamma$ such that $\lambda_i>0$  and $\lambda_i-\lambda_j\notin \Omega-\Omega$ for all $i<j<n$. We see that the minimum must be attained by a term for which $\delta_{in}$ is maximal (and non-zero). It follows that $\delta_{(n-1)n}$ must be maximal among all $\delta_{ij}$. In particular, $\delta_{(n-1)n}>0$, so $\langle E \rangle \cap \langle \overline J \rangle = \{0\}$ for all edges $E$ adjacent to $\alpha$ by Lemma \ref{lemma:vert}.
    
    We have seen that the top-dimensional contribution of $\alpha$ to $\trop(X^*)$ is contained in the set claimed in the theorem. We next show that, as long as 
    $\langle E \rangle \cap \langle\overline J \rangle = \{0\}$ for all edges $E$ adjacent to $\alpha$, every point $\gamma$ as above is a contribution of $\alpha$ to $\trop(X^*)$. Using Lemma \ref{lemma:omega}, we will reorder indices (fixing $n-1$ and $n$) such that $\omega_{(n-1)n}(i,n)\leq \omega_{(n-1)n}(i+1,n)$ for $i=0,\ldots,n-2$. This implies that 
    as long as $\delta_{(n-1)n}$ is maximal, $\delta_{in}\leq \delta_{(i+1)n}$.
    
    To show that $\gamma$ is a contribution of $\alpha$ to $\trop(X^*)$, again by Theorem \ref{thm:dual} we must exhibit a tropical tangent $\beta$ such that for each $i=0,\ldots,n$, the minimum of 
    \begin{equation}\label{eqn:deltalambda}
    	\{\delta_{ij}+\lambda_j\}_{j\neq i}
    \end{equation}
    is attained at least twice.
    We set $\delta_{0n}=0$ and inductively define
    \[
    	\delta_{in}=
    		\min \left(\{\delta_{jn}+\lambda_j\}_{j<i}\cup\{\omega_{(n-1)n}(i,n)\}\right).
    \]
    By Lemma \ref{lemma:omega} it is straightforward to verify that, after imposing the condition that $\delta_{(n-1)n}$ is maximal, this choice of the $\delta_{in}$ determines a unique valid tropical tangent of $\alpha$.
    
    Furthermore, the minimum of \eqref{eqn:deltalambda} is attained at least twice for $i=0,\ldots,n$. Indeed, for $i=0,\ldots, n-2$, the tropical Pl\"ucker relations and inductive construction guarantee \[
    	\delta_{i(n-1)}=\delta_{in}\leq \delta_{ij}+\lambda_j\qquad \textrm{for all}\ j<n-1.
    \]
    Likewise, for $i=n,n-1$ 
    \[
    	\delta_{(n-1)n}=\min \{\delta_{ji}+\lambda_j\}_{j<i}.
    \]
    
    For the final claim of the proposition, it remains to show that for sufficiently general $\gamma$, there a unique $\delta$ giving rise to it. We already know that $\delta_{(n-1)n}$ must be maximal, and as soon as its value is determined, so are all $\delta_{ij}$. In particular, for each $i\leq n-2$, $\delta_{in}=\min \{\omega_{(n-1)n}(i,n),\delta_{(n-1)n}\}$. Furthermore, the minimum of \eqref{eqn:mintwice} is attained by $\delta_{(n-1)n}$. 
    It follows that the only possibility for $\delta_{(n-1)n}$ is
    \[\delta_{(n-1)n}=\min_{i<n-1} \omega_{(n-1)n}(i,n)+\lambda_i.\]
    \end{proof}
    
    \begin{proof}[Proof of Proposition \ref{prop:vertcontribtau}]
    	Let $\beta$ be a tropical tangent. The corresponding line in $\TP$ can be described using the equations from Theorem \ref{thm:dual}; it consists of a compact part, along with rays in each direction $e_i$. Using these equations, it is straightforward to check that the ray in direction $e_i$ has 
    \[\sum_{j\neq i} \beta_{ij}e_j+\left(\max_{j,k\neq i} \beta_{ij}+\beta_{ik}-\beta_{jk}\right)e_i\]
    as its vertex. By the discussion above, this is the same as
    \[
    \alpha+	\sum_{j\neq i} \delta_{ij}e_j+\max_{j\neq i} \delta_{ij}e_i.
    \]
    Hence, the set described in the proposition is contained in the contribution of $\alpha$ to $\trop(\tau(X))$. 
    
    It remains to see that the compact part of the lines considered above are contained in the set described in the theorem. To that end, consider any tropical tangent $\beta$, and let $s$ be the maximum value of $\delta_{ij}$.
    Since $\delta_{ij}=s$ for all $i,j\in\mcH_{\max}(\beta)$, it follows that the rays in directions $e_i$ with $i\in \mcH_{\max}(\beta)$ all share a common vertex $\gamma_s$. We may decrease the value of $s$ to $s'$ without changing any other tropical Pl\"ucker coordinates.
    As $s$ decreases, the corresponding line $\beta_{s'}$ changes by shrinking the compact edge $E$ attached to $\gamma_s$. This continues until $\mcH_{\max}(\beta)\subsetneq \mcH_{\max}(\beta_s)$. The edge $E$ is contained in the union of the vertices $\gamma_{s'}$. In particular, the edge $E$ is contained in the set stated in the proposition.
    
    We may continue this procedure iteratively until $\max \delta_{ij}=0$, at which point the corresponding line has no compact edges. By the argument above, it follows that every edge of $\beta$ is included in the set described in the proposition.
    \end{proof}

    \begin{ex}[Continuation of Example \ref{ex:running} (a curve in $\PP^3$)]\label{ex:runningtrop}
    	We continue the computations from Example \ref{ex:rundt} to complete the description of $\trop(X^*)$ and $\trop(\tau(X))$. The remaining contributions to $\trop(X^*)$ all follow from Proposition \ref{prop:bergman} and Proposition \ref{prop:vertcontrib} and are listed in Table \ref{table:dual}.
    	The remaining contributions to $\trop(\tau(X))$ all follow from Proposition \ref{prop:bergman} and Proposition \ref{prop:vertcontribtau} and are listed in Table \ref{table:tau}.
    
    	It is interesting to observe that while the contributions from the vertices $V_1$ and $V_2$ to $\trop(X^*)$ are not affected by simultaneous cancellation (in the sense of \S \ref{sec:simult}), the contributions to $\trop(\tau(X))$ very much are.
    \end{ex}
    	
    \begin{table}
    \begin{tabular}{l l l l}
    Cell of $\trop(X)$ & Top-dimensional contributions to $\trop(X^*)$\\
    	\toprule
    	$E_1$ & $-E_1+\RR_{\geq 0}\cdot e_i$, $i=0,\ldots,3$\\ 
    	\addlinespace
    	$E_3$ & $-E_3+\RR_{\geq 0}\cdot e_i$, $i=0,\ldots,3$\\
    	\addlinespace
    	$E_4$ & $-E_4+\RR_{\geq 0}\cdot e_i$, $i=0,\ldots,3$\\
    	\addlinespace
    	$E_5$ & $-E_5+\RR_{\geq 0}\cdot e_i$, $i=0,\ldots,3$\\
    \midrule
    $V_1$ & $-V_1+\RR_{\geq 0}\cdot e_i+\RR_{\geq 0}\cdot e_j$, $0\leq i<j\leq 3$\\
    \addlinespace
    $V_2$ & $-V_2  +\RR_{\geq 0}\cdot e_i+\RR_{\geq 0}\cdot e_j$, $0\leq i < j \leq 3,\ (i,j)\neq (2,3)$\\
    	\bottomrule
    	\end{tabular}
    	\caption{$\trop(X^*)$ from Example \ref{ex:running} and \ref{ex:runningtrop}}\label{table:dual}
    \end{table}
    
    \begin{table}
    \begin{tabular}{l l l l}
    Cell of $\trop(X)$ & Top-dimensional contributions to $\trop(\tau(X))$\\
    	\toprule
    	$E_1$ & $E_1+\RR_{\geq 0}\cdot e_i$, $i=0,\ldots,3$\\ 
    	\addlinespace
    	$E_3$ & $E_3+\RR_{\geq 0}\cdot e_i$, $i=0,\ldots,3$\\
    	\addlinespace
    	$E_4$ & $E_4+\RR_{\geq 0}\cdot e_i$, $i=0,\ldots,3$\\
    	\addlinespace
    	$E_5$ & $E_5+\RR_{\geq 0}\cdot e_i$, $i=0,\ldots,3$\\
    \midrule
    $V_1$ & $V_1-3e_0+\RR_{\geq 0}\cdot e_i+\RR_{\geq 0}\cdot e_j$, $0\leq i<j\leq 3$\\
    \addlinespace
    $V_2$ & $V_2 +\RR_{\geq 0}\cdot (-e_1)+\RR_{\geq 0}\cdot e_j$, $j=0,2,3$\\
          & $V_2 +\RR_{\geq 0}\cdot e_1+\RR_{\geq 0}\cdot e_j$, $j=2,3$\\
    	\bottomrule
    	\end{tabular}
    	\caption{$\trop(\tau(X))$ from Example \ref{ex:running} and \ref{ex:runningtrop}}\label{table:tau}
    \end{table}
    
    \section{Multiplicities}\label{sec:mult}
    \subsection{Basics}\label{sec:mbasics}
    To any maximal-dimensional cell of a tropical variety, one may associate a \emph{multiplicity}. We refer the reader to \cite[\S 2.3]{lifting} for details. 
    For us, the following ad hoc definition will suffice.
    Consider hypersurfaces $Y_1,\ldots,Y_k\subset (\KK^*)^n$ such that the $\trop(Y_i)$ intersect transversely  in a maximal dimensional cell $E\subset \trop(Y_1\cap\ldots\cap Y_k)$. After rescaling by a monomial, we may assume that each $Y_i$ is cut out by a polynomial whose terms of minimal valuation on $E$ are $1+c_ix^{\bu_i}$ for some $c_i\in \KK^*$. Then the multiplicity of $E$ in $\trop(Y_1\cap\ldots\cap Y_k)$ is
    \begin{equation}\label{eqn:ci}
    	\bm(E)=[\langle \bu_i\rangle\cap M: \sum_i \ZZ\cdot \bu_i ],
    \end{equation}
    that is, the index of the abelian group generated by the $\bu_i$ in the intersection of $M$ with the vector space generated by the $\bu_i$. This is compatible with the \emph{fan displacement rule} \cite[Theorem 3.2]{FultonSturmfels}.
    
    In what follows, we will first compute multiplicities for the tropicalization of the graph of the Gauss map. We will then utilize this to compute multiplicities for the tropicalization of $\G(X)$, $\tau(X)$, and $X^*$. Throughout, we will assume that $X\subset \PP^n$ is a curve satisfying the hypotheses of \S\ref{sec:setup} and Assumption \ref{ass:setup} and that $X_1,\ldots,X_{n-1}$ have very general valuations and sufficiently general lowest order parts.
    \begin{ex}[Continuation of Example \ref{ex:runningP} (a curve in $\PP^2$)]\label{ex:multP}
    	We may use \eqref{eqn:ci} to compute that the edges $E_+$ and $E_-$ have multiplicity one, while the edge $E'$ has multiplicity two.. 
    \end{ex}
    
    \begin{ex}[Continuation of Example \ref{ex:running} (a curve in $\PP^3$)]\label{ex:mult}
    	We may use \eqref{eqn:ci} to compute that all edges of $\trop(X)$ have multiplicity one. 
    \end{ex}
    \subsection{Multiplicities for the Gauss map}
    Let $\widetilde{\G(X)}\subset \PP^n\times \Gr(2,n+1)$ be the closure of the graph of the Gauss map $\G_X:X\dashrightarrow \Gr(2,n+1)$. 
    There are natural projections $\pi_X:\trop(\widetilde{\G(X)})\to \trop(X)$ and $\pi_\G:\trop(\widetilde{\G(X)})\to \trop(\G(X))$. We first show how to compute multiplicities for $\trop(\widetilde{\G(X)})$. Note that the maximal-dimensional cells of $\trop(\widetilde{\G(X))}$ are one-dimensional.
    
    \begin{prop}\label{prop:mult}
    	Let $E$ be an edge of $\trop(\widetilde{\G(X)})$.
    \begin{enumerate}
    	\item If $\pi_X(E)$ is one-dimensional, then the multiplicity of $E$ is the same as the multiplicity of $\pi_X(E)$.
    	\item If $\pi_X(E)$ is a point in $\trop(X)$, then $\pi_X(E)$ is in the critical locus of $\trop(X)$, and there is a corresponding cancellative pair $\br,\br'$. The multiplicity of $E$ is the absolute value of the determinant of any $n\times n$-submatrix of the matrix with rows  
    		$\bv_1,\ldots,\bv_{n-1}$ along with $\sum_j (\br_j-\br_j')$.
    \end{enumerate}
    \end{prop}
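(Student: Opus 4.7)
The strategy is to apply the ad hoc multiplicity formula \eqref{eqn:ci} to a local defining system of $\widetilde{\G(X)}$. Let $N$ be the number of Pl\"ucker coordinates not in $\cJ$, and fix a reference Pl\"ucker coordinate outside $\cJ$. After passing to the corresponding affine chart, $\widetilde{\G(X)}$ embeds in the torus $(\KK^*)^n \times (\KK^*)^{N-1}$ and is cut out by the $n-1$ equations $f_i = 0$ together with the $N-1$ Pl\"ucker relations $p_J \prod_{j \notin J} x_j - z_J(x) = 0$. These $n + N - 2$ hypersurfaces in an $(n+N-1)$-dimensional torus cut out the 1-dimensional graph as a complete intersection.

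For part (1), let $E$ project to an edge $E' \subset \trop(X)$. At an interior point of $E$ whose image lies outside the (finite) critical locus, Proposition \ref{prop:nocancel} ensures each Pl\"ucker relation has exactly two monomials of minimal valuation, so its rescaled exponent vector takes the form $(\bu_J, \chi_J)$ with $\chi_J$ the standard basis vector in $\ZZ^{N-1}$ for the $p_J$-direction. The $(n+N-2) \times (n+N-1)$ exponent matrix has block form $\begin{pmatrix} V & 0 \\ U & I \end{pmatrix}$, where $V$ has rows $\bv_1, \ldots, \bv_{n-1}$ and $I$ is the $(N-1)\times(N-1)$ identity. By Laplace expansion, every maximal minor is a $\ZZ$-linear combination of $(n-1)\times(n-1)$ minors of $V$, so the gcd of all maximal minors equals that of the $(n-1)\times(n-1)$ minors of $V$, which by \eqref{eqn:ci} is the multiplicity of $E'$.

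For part (2), $\pi_X(E) = \{\alpha\}$ must lie in the critical locus, since otherwise Proposition \ref{prop:nocancel} determines all $p_J$ uniquely from $x$, leaving no room for $E$ to collapse. Let $\br, \br'$ be the cancellative pair. Along $E$ some Pl\"ucker coordinate $p_J$ with $\Delta_J(\br), \Delta_J(\br') \ne 0$ must vary, forcing $\val(z_J) > \nu_J$; consequently the monomials $\Delta_J(\br) a_\br, \Delta_J(\br') a_{\br'}$ are the only terms of $z_J$ of minimal valuation, and the $p_J \prod x_j$ term has strictly higher valuation. After rescaling, the Pl\"ucker relation for $J$ has leading form $1 + \mathrm{const} \cdot x^{\pm \sum_j(\br_j - \br_j')} + \HOT$, contributing an exponent vector purely in the $x$-directions. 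The remaining $N-2$ Pl\"ucker relations contribute vectors $(\bu_{J'}, \chi_{J'})$ as in part (1), while the $p_J$ column of the exponent matrix is identically zero, reflecting the one-parameter family along $E$. Thus the unique non-vanishing maximal minor is obtained by omitting the $p_J$ column, and has block-triangular form with top-left $n \times n$ block $A$ consisting of rows $\bv_1, \ldots, \bv_{n-1}, \sum_j(\br_j - \br_j')$. Hence the multiplicity of $E$ is $|\det(A)|$, which by the zero-sum constraint on $M$ equals the absolute value of any $n \times n$ minor of the matrix in the proposition.

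The main technical obstacle arises in case (2) when $\alpha$ is a vertex of $\trop(X)$: the equation $f_{i_0}$ then has three monomials of minimal valuation rather than two (by Assumption \ref{ass:setup}), so \eqref{eqn:ci} does not directly apply. This can be circumvented either by invoking the fan displacement rule (\cite[Theorem 3.2]{FultonSturmfels}, cf.~\S\ref{sec:mbasics}) to compute the stable tropical intersection multiplicity, or by a small generic perturbation restoring the two-term condition and then applying Osserman--Payne lifting (Theorem \ref{thm:lifting}). In either approach, the very general valuations hypothesis is essential to guarantee that $\br, \br'$ is the unique source of leading-order cancellation on $E$, so that the cancellation row of the exponent matrix is indeed $\sum_j(\br_j - \br_j')$.
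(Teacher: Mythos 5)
Your proposal follows the same overall strategy as the paper: both compute multiplicities via the ad hoc formula \eqref{eqn:ci} applied to a system of equations cutting out $\widetilde{\G(X)}$, and your explicit block-matrix analysis is a useful expansion of what the paper asserts more tersely. Two comments are worth making.

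First, on the ``main technical obstacle.'' You correctly observe that when $\pi_X(E)=\alpha$ is a vertex, $f_{i_0}$ has three monomials of minimal valuation and \eqref{eqn:ci} does not apply to it directly. However, the paper's resolution is neither the fan displacement rule nor a perturbation argument: the displayed lowest-order terms in the proof of part (2) --- namely $1+c_{i\bv_i}x^{\bv_i}$ for \emph{all} $i$, including $i_0$ --- are achieved by implicitly replacing $f_{i_0}$ with a two-term combination in the spirit of $f_{i_0}'$ from the proof of Proposition \ref{prop:cancel} (i.e., subtracting a suitable multiple of the cancellative Pl\"{u}cker relation). This replaced polynomial has exponent vector $\bv_{i_0}$, so the exponent matrix is unchanged and \eqref{eqn:ci} applies directly; no recourse to stable intersection or a limiting argument is needed.

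Second, a minor imprecision in your part (2): you assert that the remaining Pl\"ucker relations contribute exponent vectors $(\bu_{J'},\chi_{J'})$ with the $z$-block being the identity, and hence that omitting the $p_J$ column yields the \emph{unique} nonvanishing maximal minor. This is not quite right: as the paper itself notes, the ``monomial'' $h_{J'}$ produced by Proposition \ref{prop:algorithm} may involve $z_I$ (the cancellative Pl\"ucker variable), so those rows can carry a nontrivial entry in the $z_I$-column, and other maximal minors can be nonzero. Fortunately all such minors turn out to be $\pm\det(A)$ up to sign (this can be seen by expanding successively along the rows whose $z$-entries form a unipotent triangular pattern), so the gcd --- and hence the multiplicity --- is still $|\det(A)|$, and the conclusion is unaffected. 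It would strengthen your writeup to acknowledge and handle this case rather than claim uniqueness.
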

    \begin{proof}
    Suppose first that $\pi_X(E)$ is one-dimensional. Then $E$ is arising from tropical tangents at points not contained in the critical locus, since by assumption the critical locus is finite. Minimal valuation terms along $E$ for polynomials cutting out $\widetilde{\G(X)}$ are 
    \begin{align}
    	\begin{split}\label{eqn:gx1}
    	1+c_{i\bv_i}x^{\bv_i} &\qquad i=1,\ldots,n-1\\
    z_J-\Delta_J(\br_J)a_{\br_J} &\qquad |J|=2
    \end{split}
    \end{align}
    for some $\br_J$'s depending on $J$.
    On the other hand, minimal valuation terms along $\pi_X(E)$ for $X$ are just the $1+c_{i\bv_i}x^{\bv_i}$. It follows from \eqref{eqn:ci} that the multiplicities of $E$ and $\pi_X(E)$ agree.
    
    Suppose instead that $\pi_X(E)$ is contained in the critical locus. Then
    lowest order terms for polynomials cutting out $\widetilde{\G(X)}$ are 
    \begin{align}
    	\begin{split}\label{eqn:gx2}
    1+c_{i\bv_i}x^{\bv_i} &\qquad i=1,\ldots,n-1\\
    \Delta_I(\br)a_{\br}+\Delta_I(\br')a_{\br'}&\\
    z_J-h_J&\qquad J\neq I
    \end{split}
    \end{align}
    for some monomial $h_J$ in the $a_{\br}$ and $z_I$. Here, for the latter polynomials, we have made use of Proposition \ref{prop:algorithm}.
    We again use \eqref{eqn:ci} to obtain the desired claim.
    \end{proof}
    
    We can now use this to compute multiplicities for $\G(X)$.
    \begin{thm}\label{thm:gmult} Assume that $X$ is not a line. Then the multiplicity of an  edge $E$ of $\trop(\G(X))$ is 
    \[
    	\sum_{\substack{F\subset \trop(\widetilde{\G(X)})\\ \pi_\G(F)=E}} \bm(F)\cdot [N_E:\pi_\G(N_F)],
    \]
    where  $F$ ranges over the edges of $\trop(\widetilde{\G(X)})$, $\bm(F)$ is the multiplicity of $F$ as determined in Proposition \ref{prop:mult}, and $N_E,N_F$ are the intersections of $\langle E\rangle$ and $\langle F \rangle$ with the appropriate cocharacter lattices.
    \end{thm}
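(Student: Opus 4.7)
\medskip
\noindent\textbf{Proof proposal.} The plan is to recognize this formula as a direct application of the standard tropical pushforward formula for multiplicities under a monomial morphism, applied to the projection
\[
\pi_\G : \widetilde{\G(X)} \to \G(X),
\]
using the multiplicities of $\trop(\widetilde{\G(X)})$ already computed in Proposition~\ref{prop:mult}.

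The first step is to observe the key algebraic input: for a curve $X\subset \PP^n$ which is not a line, the Gauss map $\G_X : X \dashrightarrow \G(X)$ is birational in characteristic zero (classical; see e.g.\ \cite[\S15]{harris}). Consequently $\pi_\G$ is a birational morphism of curves, and in particular is generically finite of degree one. Moreover, after composing with the Pl\"ucker embedding, $\pi_\G$ is induced by a coordinate projection between ambient tori, so it is a monomial morphism.

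Next I would invoke the general tropical pushforward formula (see e.g.~\cite[Proposition 3.4.5 / Theorem 3.6.1]{tropical}): for a monomial morphism $\varphi : Y \to Z$ between subvarieties of tori which is generically finite of degree $d$ on $Y$, and for every top-dimensional cell $E$ of $\trop(Z)$,
\[
d\cdot \bm(E) \;=\; \sum_{F\,:\, \trop(\varphi)(F)=E} \bm(F)\cdot [N_E : \trop(\varphi)(N_F)],
\]
where the sum ranges over top-dimensional cells $F$ of $\trop(Y)$ mapping onto $E$, and $N_E$, $N_F$ are the cocharacter lattices of $\langle E\rangle$ and $\langle F\rangle$. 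Specializing to $\varphi = \pi_\G$ with $d=1$, and using Proposition~\ref{prop:mult} to evaluate each $\bm(F)$, immediately gives the claimed formula.

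The main subtlety is verifying the hypotheses of the pushforward formula. We must check that $\trop(\widetilde{\G(X)})$ is a balanced weighted polyhedral complex of pure dimension one, which is automatic since $\widetilde{\G(X)}$ is an irreducible curve and the weights from Proposition~\ref{prop:mult} are the standard tropical multiplicities computed via~\eqref{eqn:ci}. We must also check that only finitely many edges $F$ map onto any given edge $E$, which follows from $\pi_\G$ being a finite morphism: the fiber of $\trop(\pi_\G)$ over a generic point of $E$ is a finite set, and finiteness of preimages of edges is then a consequence of the piecewise-linear structure. Modulo these routine verifications, the formula follows.
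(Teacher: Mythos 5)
Your proposal is correct and takes essentially the same route as the paper. The paper's proof consists of exactly two steps: (i) cite \cite[Proposition 15.3]{harris} to conclude that in characteristic zero the map $\widetilde{\G(X)}\to\G(X)$ has degree one (so the degree factor $d$ in the pushforward formula is $1$), and (ii) apply the pushforward formula of \cite[Theorem C.1]{lifting}, which is the Osserman--Payne refinement of the Sturmfels--Tevelev multiplicity formula; this is the same formula you invoke, just with a different reference (the statement you quote is not quite where you place it in \cite{tropical}, and the paper's citation to \cite{lifting} is the appropriate one since it handles the case where the projection need not be proper). Your additional remarks about balancing and finiteness of preimages are routine and are absorbed into the cited theorem.
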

    \begin{proof}
    	Since we are assuming that $\KK$ has characteristic zero, the degree of the map $\widetilde{\G(X)}\to \G(X)$ is always one \cite[Proposition 15.3]{harris}. The claim now follows directly from \cite[Theorem C.1]{lifting}.
    \end{proof}
    \begin{ex}[Continuation of Example \ref{ex:running} (a curve in $\PP^2$)]\label{ex:multP2}
    Since $X$ is a plane curve, we may identify $\trop(X^*)$ and $\trop(\G(X))$. 
	    We may compute the multiplicities of edges of $\trop(\G(X))$ using Proposition \ref{prop:mult} and Theorem \ref{thm:gmult}.
	    The edge $E_+$ contributes the edge 
	    \[\{(\alpha ,-3\alpha)\ |\ \alpha\in E_+\}\subset \trop(\widetilde{\G(X)}),\]
	   which projects to the edge $-E_+$ in $\trop(\G(X))$. The sublattice index in this case is $3$, so this edge has multiplicity $3$.

	    The edge $E_-$ contributes the edge 
	    \[\{(\alpha ,-\alpha)\ |\ \alpha\in E_-\}\subset \trop(\widetilde{\G(X)}),\]
	    which projects to the edge $-E_-$ in $\trop(\G(X))$. The sublattice index in this case is $1$, so this edge has multiplicity $1$.

The edge $E'$ contributes the edge 
	    \[\{(\alpha ,-2\alpha)\ |\ \alpha\in E'\}\subset \trop(\widetilde{\G(X)}),\]
for $\alpha\in E'$,
which projects to the edge $-E'$ in $\trop(\G(X))$. The sublattice index in this case is $2$. Since $E'$ already had multiplicity two, this edge has multiplicity $2\cdot 2=4$.

 The vertex  $V$ contributes the edge $(0,\RR_{\geq 0}\cdot e_1) \subset \trop(\widetilde{\G(X)}$. By Proposition \ref{prop:mult}, its multiplicity in $\trop(\widetilde{\G(X)}$ is the same as the absolute value of the determinant of any $2\times 2$ minor of 
the matrix whose rows are
$(0,-1,1)$, and $(0,-1,1)-(-2,1,1)$. Hence, the multiplicity is $2$. When projecting to the edge $\RR_{\geq 0}\cdot e_1$ of  $\trop(\G(X))$, the multiplicity remains $2$.

See Figure \ref{fig:planarExample} for a depiction of $\trop(\G(X))$ with multiplicities.
    \end{ex}

    \subsection{Multiplicities for the Tangential and dual varieties}
    We now consider two incidence varieties:
    \begin{align*}
    	I_{\tau(X)}=\overline{\{(P,L,Q)\in X\times \Gr(2,n+1)\times \PP^n\ |\ T_P(X)=L,\ Q\in L\}}\\
    	I_{X^*}=\overline{\{(P,L,Q)\in X\times \Gr(2,n+1)\times (\PP^n)^*\ |\ T_P(X)=L,\ L\subset Q\}}.
    \end{align*}
    These are respectively the pull-backs of $\PP(\cS)$ or $\PP(\cQ^*)$ to $\widetilde{\G(X)}$.
    Similar to in Theorem \ref{thm:dual}, we obtain explicit descriptions for $\trop(I_{\tau(X)})$ and $\trop(I_{X^*})$ by intersecting $\trop(\widetilde{\G(X)})$ with tropical hypersurfaces obtained from \eqref{eqn:taut} or \eqref{eqn:dual}. 
    
    To compute multiplicities for the tangential and dual varieties, we will use the following lemma:
    
    \begin{lemma}\label{lemma:bundle}
    	Let $W$ be the pull-back of either $\PP(\cS)$ or $\PP(\cQ^*)$ to $\widetilde{\G(X)}$. Consider any maximal cell $E$ of $\trop(W)$. Its image under the projection to $\trop(\widetilde{\G(X)})$ is a maximal cell $E'$, and $\bm(E)=\bm(E')$.
    \end{lemma}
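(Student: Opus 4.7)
The plan is first to show that $E' := \pi(E)$ is a maximal cell of $\trop(\widetilde{\G(X)})$ by a dimension count, then to compute $\bm(E)$ via \eqref{eqn:ci} by regarding $W$ as cut out of $\widetilde{\G(X)} \times \PP^n$ (respectively $\widetilde{\G(X)} \times (\PP^n)^*$) by the bundle equations \eqref{eqn:taut} (respectively \eqref{eqn:dual}), and finally to extract the factor $\bm(E')$ from these equations' contribution. For the dimension claim: $\pi: W \to \widetilde{\G(X)}$ is a projective bundle of relative dimension $k$ (with $k=1$ for $\PP(\cS)$ and $k=n-2$ for $\PP(\cQ^*)$), so $\dim W = k+1$, and any maximal cell $E$ of $\trop(W)$ is $(k+1)$-dimensional. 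Since $\trop(\widetilde{\G(X)})$ is 1-dimensional, $\pi(E)$ has dimension $0$ or $1$; dimension $0$ is impossible, as the generic tropical fiber of $\pi$ has dimension $k$, so $E$ would be forced to sit in a fiber of dimension at most $k$, contradicting $\dim E = k+1$. Hence $E'$ is an edge of $\trop(\widetilde{\G(X)})$.

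For the multiplicity computation, fix a generic $p \in E$ with image $p' \in E'$. Decompose the character lattice of the ambient torus as $M_1 \oplus M_2$, where $M_1$ carries the coordinates of $\widetilde{\G(X)}$ and $M_2$ the bundle coordinates. The minimal-valuation-term exponent vectors of the defining equations of $W$ split into two families: vectors $\bu_1,\dots,\bu_d \in M_1 \oplus \{0\}$ coming from the equations of $\widetilde{\G(X)}$, whose lattice index computes $\bm(E')$ via \eqref{eqn:ci}, together with vectors $\bw_1,\dots,\bw_s$ coming from the bundle equations, whose $M_2$-components are nonzero. The identity $\bm(E) = \bm(E')$ is equivalent to showing that the $\bw_j$ contribute a multiplicative factor of $1$ to the combined lattice index.

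For this main step, observe that the bundle equations \eqref{eqn:taut}/\eqref{eqn:dual} are linear in the $M_2$-coordinates, with coefficients that are monomials in the Plücker coordinates. After fixing Plücker coordinates to values determined by $p'$, these equations cut out the tropicalization of the linear subspace $L \subset \PP^n$ (in the $\cS$ case) or $L^\perp \subset (\PP^n)^*$ (in the $\cQ^*$ case). Tropicalizations of linear subspaces of a torus are Bergman fans, which carry multiplicity $1$ on every top-dimensional cone (see e.g.~\cite[\S 4.2]{tropical}). Combined with the fan displacement rule \cite[Theorem 3.2]{FultonSturmfels}, this says the tropical fiber of $\pi$ over $p'$ has multiplicity $1$ at the generic point $p$; equivalently, after replacing each $\bw_j$ by an appropriate integer linear combination with the $\bu_i$, the $M_2$-projections of the resulting vectors form a basis of a saturated sublattice of $M_2$ of rank $n-k$. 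Either viewpoint gives $\bm(E) = \bm(E') \cdot 1 = \bm(E')$. The main obstacle is tracking minimal-valuation terms through these basis changes rigorously; this is cleanest via the displacement rule, which lets one separate the fiber multiplicity (always $1$ for a linear subspace) from the base multiplicity $\bm(E')$ without manipulating the $\bw_j$ explicitly.
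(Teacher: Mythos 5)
Your overall strategy mirrors the paper's: first show $E' = \pi(E)$ is a maximal cell by a dimension count, then show the bundle equations contribute trivially to the lattice index in \eqref{eqn:ci}. But the multiplicity step has two issues worth flagging. First, \eqref{eqn:ci} requires a \emph{transverse} intersection of hypersurfaces, and the full set of equations \eqref{eqn:taut}/\eqref{eqn:dual} is not a complete intersection --- there are $n+1$ (resp.\ $\binom{n+1}{3}$) of them cutting out a fiber of codimension only $n-k$ in the ambient torus. The paper resolves this by explicitly selecting $n-k$ of the bundle equations that do intersect transversely along $E$ and then observing (after a monomial rescaling) that each of their minimal-valuation parts is a binomial of the shape $1 + h\cdot y_i/y_j$, so the $M_2$-components of the corresponding exponent vectors are standard basis differences $e_i - e_j$. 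Saturatedness of the sublattice they generate is then immediate, and your desired product decomposition $\bm(E) = \bm(E')\cdot 1$ drops out of a short snake-lemma argument on the exact sequence $0\to M_1 \to M \to M_2 \to 0$. Your proposal skips both the selection step and the binomial verification and tries to get the product formula from the fan displacement rule \cite[Theorem 3.2]{FultonSturmfels} --- but that theorem computes stable intersection multiplicities and does not directly yield a fibration-style multiplicativity; you still need the lattice argument either way.

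Second, the ``replacing each $\bw_j$ by an appropriate integer linear combination with the $\bu_i$'' clause is a red herring: adding $\bu_i \in M_1\oplus\{0\}$ to $\bw_j$ leaves $\pi_2(\bw_j)$ unchanged, so it cannot be used to manufacture the saturated basis of $M_2$ you want. The right statement is that the $\pi_2(\bw_j)$ as given already generate a saturated sublattice; the Bergman-fan-has-multiplicity-one fact is exactly equivalent to that saturation (by applying \eqref{eqn:ci} to the tropical fiber alone), and once you have saturation in $M_2$ plus $\bu_i\in M_1\oplus\{0\}$, the index splits. Also, for the dimension argument, be careful to state that \emph{every} fiber of $\trop(W)\to\trop(\widetilde{\G(X)})$ has dimension $k$, not just the generic one; this follows from the observation (used in the paper) that the fibers are tropical linear spaces and so their dimension is determined by the valuations of Pl\"ucker coordinates alone. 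With those corrections, your argument is essentially the paper's.
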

    \begin{proof}
    	The tropicalization of a linear space is determined by the valuations of its Pl\"ucker coordinates \cite[Theorem 3.8]{ss}. Hence, the fibers of the projection $\trop(W)\to \trop(\widetilde{\G(X)})$ are all tropical linear spaces, and in particular have dimension $m$. It follows that maximal dimensional cells of $\trop(W)$ project to maximal dimensional cells of $\trop(\widetilde{\G(X)})$.
    
    	The second claim will follow from \eqref{eqn:ci}. We argue in the case $W=\PP(\cQ^*)$; the other case is similar. The variety $\widetilde{\G(X)}$ is a complete intersection, and on $E'$ the corresponding equations have minimal valuation terms as in \eqref{eqn:gx1} or \eqref{eqn:gx2}. To obtain the variety $W$, we add the variables $y_0,\ldots,y_n$ along with the equations \eqref{eqn:dual}. These additional equations are no longer a complete intersection. However, for points tropicalizing to $E$, we may find $n-(\dim W-\dim \widetilde{\G(X)})=2$ of these equations whose corresponding tropical hypersurfaces intersect transversely along $E'$. After rescaling by a monomial, the terms of minimal valuation of these equations will each have the form
    	\begin{equation}\label{eqn:y}
    		1+h\cdot \frac{y_i}{y_j}
    	\end{equation}
    	for some $i\neq j$ and $h$ a Laurent monomial in the Pl\"ucker coordinates (or equivalently a monomial in the $x^\bu$ and $z_J$).
    	
    	Applying \eqref{eqn:ci} to compute $\bm(E)$, it is straightforward to see that 	
    	we will obtain the same result whether or not we include \eqref{eqn:y}. It follows that $\bm(E)=\bm(E')$.
    \end{proof}
    
    Using Proposition \ref{prop:mult} and Lemma \ref{lemma:bundle}, we may determine the multiplicities for $\trop(I_{\tau(X)})$ and $\trop(I_{X^*})$. We now also obtain the multiplicities of $\trop(\tau(X))$ and $\trop(X^*)$.
    Let $\pi_{\tau(X)}$ and $\pi_{X^*}$ respectively be the projections of $\trop(I_{\tau(X)})$ and $\trop(I_{X^*})$ to $\trop(\tau(X))$ and $\trop(X^*)$.
    We use notation as in Theorem \ref{thm:gmult}.
    \begin{thm}\label{thm:mult}
    	Assume that $X$ is not a line.
    Consider any maximal cell $E$ of $\trop(X^*)$. Its multiplicity is
    \[
    	\sum_{\substack{F\subset \trop(I_{X^*})\\ \pi_{X^*}(F)=E}} \bm(F)\cdot [N_E:\pi_{X^*}(N_F)].
    \]
    \end{thm}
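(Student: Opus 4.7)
The plan is to mirror the proof of Theorem \ref{thm:gmult}: realize $\trop(X^*)$ as the image of $\trop(I_{X^*})$ under $\pi_{X^*}$, compute the multiplicities of the source directly, and then apply the tropical projection formula of Osserman--Payne \cite[Theorem C.1]{lifting}. Since that formula outputs $d \cdot \bm(E)$ on the right-hand side, where $d$ is the generic degree of the map, the heart of the argument is to check that $I_{X^*} \to X^*$ is birational.

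First I would assemble multiplicities for $\trop(I_{X^*})$ from what is already in place. Maximal cells of $\trop(I_{X^*})$ are one-dimensional, and by Lemma \ref{lemma:bundle} each such cell projects to a maximal cell of $\trop(\widetilde{\G(X)})$ with the same multiplicity. Proposition \ref{prop:mult} then supplies the multiplicity of that cell via the combinatorial data coming from $\bv_1,\ldots,\bv_{n-1}$ (and a cancellative pair, when the cell sits over the critical locus). The explicit tropical description of $I_{X^*}$ as the intersection of $\trop(\widetilde{\G(X)})\times(\TP)^*$ with the $n+1$ tropical hypersurfaces extracted from \eqref{eqn:dual}, together with the fact that those equations form a tropical basis (as used in Theorem \ref{thm:dual}), ensures that every maximal cell of $\trop(I_{X^*})$ projecting to $E$ is accounted for.

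The crucial non-formal input is that $I_{X^*} \to X^*$ has generic degree one. This reduces to biduality: for a non-degenerate $X \subset \PP^n$ that is not a linear subspace, the conormal variety $\mathcal{N}(X) \subset X \times (\PP^n)^*$ projects birationally onto $X^*$ in characteristic zero \cite{harris}. Because the tangent line $T_P X$ is recoverable from a smooth point $P \in X$, the forgetful map $I_{X^*} \to \mathcal{N}(X)$ is also birational, hence so is the composition $I_{X^*} \to X^*$. Applying \cite[Theorem C.1]{lifting} now delivers exactly the formula claimed.

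The main obstacle, as in the proof of Theorem \ref{thm:gmult}, is the birationality check: this is where the hypothesis that $X$ is not a line is indispensable, since in that degenerate case the Gauss map is constant, $X^*$ has smaller than expected dimension, and the projection $I_{X^*} \to X^*$ is no longer generically finite of degree one. A secondary point worth verifying carefully is that Lemma \ref{lemma:bundle} does apply to $I_{X^*}$ as the pullback of $\PP(\cQ^*)$; this is immediate from the definition of $I_{X^*}$, but it is what legitimizes reusing the multiplicity computation from $\trop(\widetilde{\G(X)})$ without modification.
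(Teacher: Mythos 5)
Your argument follows the same route as the paper: compute multiplicities on $\trop(I_{X^*})$ via Lemma \ref{lemma:bundle} and Proposition \ref{prop:mult}, show $I_{X^*}\to X^*$ has generic degree one by factoring through the conormal variety and invoking biduality (with the hypothesis that $X$ is not a line guaranteeing $\dim X^*=n-1$), and then apply \cite[Theorem C.1]{lifting}. The only cosmetic difference is that you factor through $\mathcal{N}(X)\subset X\times(\PP^n)^*$ while the paper factors through $\PP(\cQ^*_{|\G(X)})\subset \G(X)\times(\PP^n)^*$; also note your parenthetical "non-degenerate" hypothesis on $X$ is unnecessary and is not assumed in the theorem.
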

    \begin{proof}
    	Since the Gauss map is birational (\cite[Proposition 15.3]{harris}), the projection from $I_{X^*}$ to the \emph{conormal variety} $\PP(\cQ^*_{|\G(X)})$ is birational.
    	By projective duality, the conormal variety for $X$ agrees with that of $X^*$ \cite[\S 1.2A]{tevelev}. Hence as long as $\dim X^*=n-1$, the projection from the conormal variety to $X^*$ is also birational. Since $X$ is not a line, $\dim X^*$ is indeed $n-1$.
    	The claim now follows directly from \cite[Theorem C.1]{lifting}.
    \end{proof}
    
    \begin{cor}\label{cor:tamemult}
    	Consider any tame vertex $\alpha\in\trop(X)$ as in Proposition \ref{prop:vertcontrib}. Then the contributions of this vertex to $\trop(X^*)$ all have multiplicity equal to the absolute value of the determinant of any of the $n\times n$-submatrices of the matrix with rows $\bv_1',\bv_1,\ldots,\bv_{n-1}$. 
    \end{cor}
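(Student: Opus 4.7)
The plan is to compute $\bm(E)$ for a maximal cell $E\subseteq -\alpha+\sum_{i\notin J}\RR_{\geq 0}\cdot e_i$ of $\trop(X^*)$ contributed by the tame vertex $\alpha$ using Theorem \ref{thm:mult}, by reducing the computation to $\trop(\widetilde{\G(X)})$ via Lemma \ref{lemma:bundle} and applying Proposition \ref{prop:mult}(2).

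By the last sentence of Proposition \ref{prop:vertcontrib}, a generic point $\gamma\in E$ admits a unique tropical tangent at $\alpha$ as a lift. Hence there is a unique cell $F\subset \trop(I_{X^*})$ with $\pi_{X^*}(F)=E$, and Theorem \ref{thm:mult} reduces to
\[
\bm(E)=\bm(F)\cdot [N_E:\pi_{X^*}(N_F)].
\]
The image of $F$ in $\trop(\widetilde{\G(X)})$ is an edge $F'$ describing the one-parameter family of tropical tangents at $\alpha$ parametrized by $\lambda_J\geq 0$, so Lemma \ref{lemma:bundle} gives $\bm(F)=\bm(F')$. Since $X_1,\ldots,X_{n-1}$ have very general valuations, the cancellative pair at $\alpha$ is $(\bv,\bv')$, and using $\bv_j=\bv'_j$ for $j\neq i_0$ we obtain $\sum_j(\bv_j-\bv'_j)=\bv_{i_0}-\bv'_{i_0}$. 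Proposition \ref{prop:mult}(2) then computes $\bm(F')$ as the absolute value of any $n\times n$ minor of the matrix with rows $\bv_1,\ldots,\bv_{n-1},\bv_{i_0}-\bv'_{i_0}$. A single row operation (replacing this last row by $-\bv'_{i_0}$ via subtraction of $\bv_{i_0}$) shows that these minors agree, up to sign, with those in the statement of the corollary, after reindexing so that $i_0=1$.

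The remaining and most delicate step is to verify that $[N_E:\pi_{X^*}(N_F)]=1$. Write a generic $\gamma\in E$ as $\gamma=-\alpha+\sum_{i\notin J} s_i e_i$ with $s_i>0$ and restrict to the open subcone on which some specific $k\notin J$ achieves $\min_{i\notin J} s_i$. Unwinding the tropical bundle equations from Theorem \ref{thm:dual}, the unique tropical tangent lifting $\gamma$ satisfies $\lambda_J=s_k$ and all other $\lambda_I=0$. Under the resulting linear parametrization $\gamma\mapsto (\beta(\gamma),\gamma)$ of $F$, the basis vector $e_k\in N_E$ lifts to an integer vector of $N_F$ (shifting both $s_k$ and $\lambda_J$ by one), while each other basis vector $e_i$ with $i\notin J\cup\{k\}$ lifts by shifting only $s_i$ and leaving $\beta$ fixed. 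Hence $\pi_{X^*}(N_F)=N_E$, and the lattice index is one. The principal obstacle is precisely this explicit integrality check on the lifting map; once established, combining it with the multiplicity formula above completes the proof.
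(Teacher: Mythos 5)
Your proof takes essentially the same route as the paper's: uniqueness from Proposition \ref{prop:vertcontrib} to pin down a single cell $F\subset\trop(I_{X^*})$ over $E$, Lemma \ref{lemma:bundle} to descend to a cell of $\trop(\widetilde{\G(X)})$, Proposition \ref{prop:mult}(2) to compute its multiplicity using the cancellative pair $(\bv,\bv')$, a row operation to match the stated minors, and an explicit surjectivity check for $\pi_{X^*}(N_F)\subseteq N_E$. The first three steps and the row operation are correct.

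The flaw is in the lattice-index step, specifically the claim that ``the unique tropical tangent lifting $\gamma$ satisfies $\lambda_J=s_k$ and all other $\lambda_I=0$.'' This is only true under the stronger genericity hypothesis of Theorem \ref{thm:genericrestate}(2), not for a general tame vertex. For a general tame vertex, the parametrization established in the proof of Proposition \ref{prop:vertcontrib} is: $\delta_J=\min_{i<n-1}\bigl(\omega_J(i,n)+\lambda_i\bigr)$ and $\delta_{in}=\min\{\omega_J(i,n),\,\delta_J\}$, where the $\omega_J(i,n)$ from Lemma \ref{lemma:omega} need not vanish. Consequently the subcone you should restrict to is where $\omega_J(k,n)+\lambda_k$ achieves the minimum, not where $\lambda_k$ does, and several $\delta_I$ besides $\delta_J$ can be nonzero. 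The conclusion you want still follows, because along that subcone each $\delta_{ij}$ depends on $\lambda$ affinely with slope $0$ or $1$ in $\lambda_k$ and slope $0$ in the remaining $\lambda_i$; that is all that the surjectivity of $N_F\to N_E$ needs, and it is exactly what the paper invokes. So your conclusion and overall structure are right, but the quoted formula for the lift should be replaced with the correct one to make the argument valid for all tame vertices rather than only the generic ones.
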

    \begin{proof}
    By the second claim of Proposition \ref{prop:vertcontrib}, maximal dimensional cells $E$ of $\trop(X^*)$  contributed by $\alpha$ each come from a single maximal dimensional cell $F$ of  $\trop(\widetilde{\G(X)})$ contributed by $\alpha$.
    	In this situation, the lattice $N_F$ surjects onto the lattice $N_E$: by the discussion in the proof of Proposition \ref{prop:vertcontrib}, any point $\gamma=-\alpha+\lambda\in E$ is the image of $(\alpha,\beta,\gamma)$, where 
    	$\delta_{ij}=\beta_{ij}-(\alpha_i+\alpha_j)$ is either fixed for all $\gamma\in E$, or equals 
    	\[
    		\delta_{(n-1)n}=\omega_{(n-1)n}(i,n)+\lambda_k
    	\]
    for some fixed $k$. It follows that if $\gamma-\gamma'\in N_E$, then 
    \[
    (\alpha,\beta,\gamma)-(\alpha,\beta',\gamma')=(0,\beta-\beta',\gamma-\gamma')\in N_F
    \]
    since $(\beta-\beta')_{ij}$ is either zero, or $\lambda_k-\lambda_k'$.
    The claim now follows from Proposition \ref{prop:mult} and Theorem \ref{thm:mult}. 
    \end{proof}

    \begin{thm}\label{thm:multtau}
    	Assume that $X$ is not contained in a plane.
    Consider any maximal cell $E$ of $\trop(\tau(X))$. Its multiplicity is
    \[
    		\sum_{\substack{F\subset \trop(I_{\tau(X)})\\ \pi_{\tau(X)}(F)=E}}\bm(F)\cdot [N_E:\pi_{X^*}(N_F)]. 
    \]
    \end{thm}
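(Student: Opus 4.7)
The plan is to follow the template of the proof of Theorem~\ref{thm:mult}, replacing $X^*$ by $\tau(X)$ and $\PP(\cQ^*)$ by $\PP(\cS)$ throughout. First, since $\KK$ has characteristic zero, the Gauss map of $X$ is birational onto its image \cite[Proposition 15.3]{harris}, so the projection $\widetilde{\G(X)} \to \G(X)$ is birational. Pulling back $\PP(\cS_{|\G(X)})$ along this birational map, the induced projection $I_{\tau(X)} \to \PP(\cS_{|\G(X)})$ is also birational, as $I_{\tau(X)}$ is by definition the fibre product of $\widetilde{\G(X)}$ with $\PP(\cS_{|\G(X)})$ over $\G(X)$.

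Next, I would verify that the remaining projection $\pi:\PP(\cS_{|\G(X)}) \to \tau(X)$ is birational. Since $\G(X)$ is one-dimensional and $\PP(\cS)$ has one-dimensional fibres, the total space $\PP(\cS_{|\G(X)})$ has dimension two. The hypothesis that $X$ is not contained in a plane ensures that $\tau(X)$ is also two-dimensional: otherwise every tangent line of $X$ would lie in a single plane, and hence so would $X$. Generic injectivity of $\pi$, i.e.\ that a general point of $\tau(X)$ lies on a unique tangent line of $X$, is the classical statement that the tangent developable of a non-planar curve is birational to the projectivised tangent bundle; it may be verified by a direct dimension count on the incidence locus of pairs $(P,P')$ of points of $X$ whose tangent lines meet, which has image of dimension strictly less than $2$ in $\tau(X)$ unless $X$ is planar.

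Composing the two birationalities, the projection $\pi_{\tau(X)}: I_{\tau(X)} \to \tau(X)$ is birational, i.e.\ has degree one. The multiplicity formula then follows immediately from \cite[Theorem C.1]{lifting} applied to $\pi_{\tau(X)}$, with the multiplicities $\bm(F)$ of cells $F \subset \trop(I_{\tau(X)})$ computed via Proposition~\ref{prop:mult} combined with Lemma~\ref{lemma:bundle}.

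The main obstacle will be cleanly establishing the birationality of $\pi$ in the generality we need: $X$ is a singular complete intersection curve, so one must be careful that the classical argument for smooth non-degenerate curves still applies. However, since birationality is a generic property and $X$ has only finitely many singular points (on which the Gauss map is undefined anyway), it suffices to argue over the smooth locus of $X$, where the classical dimension count goes through verbatim.
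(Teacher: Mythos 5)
Your proposal is correct and takes essentially the same approach as the paper: both factor $\pi_{\tau(X)}$ through the birational map $I_{\tau(X)}\to\PP(\cS_{|\G(X)})$, establish that $\PP(\cS_{|\G(X)})\to\tau(X)$ is generically one-to-one (a general point of $\tau(X)$ lies on a unique tangent line, otherwise $X$ would be planar), and then apply \cite[Theorem C.1]{lifting} together with Lemma~\ref{lemma:bundle}. The only difference is cosmetic: the paper proves generic injectivity by fixing a generic tangent line $L$ and showing that every tangent line would have to meet $L$, whereas you sketch an equivalent dimension count on the incidence correspondence in $X\times X$ of pairs of points with meeting tangents; both rest on the classical fact that a family of pairwise coplanar lines lies in a plane or passes through a common point.
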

    \begin{proof}
    	We may assume that $X$ is not a line, since then it is contained in a plane. It follows that $\dim(\tau(X))=\dim \PP(\cS_{|\widetilde{\G(X)}})=2$. 
    
    	We now claim that a generic point of $\tau(X)$ lies on only one tangent line of $X$. 
    	Suppose instead that any generic point lies on at least two tangent lines. Therefore, for a generic line $L$, every generic point $P\in L$  lies on another tangent line. It follows that every tangent line of $X$ meets $L$. Indeed, the restriction of $\PP(\cS_{|\G(X)})$ to the locus in $G(2,n+1)$ of lines meeting $L$ must have dimension two, so it agrees with $\PP(\cS_{|{\G(X)}})$.
    
    Since $L$ was generic, and every  tangent line of $X$ meets $L$, we conclude that any pair of tangent lines to $X$ is coplanar. But this implies that $\tau(X)$ is contained in a plane, and so is $X$, contradicting a hypothesis of the theorem. We conclude that a generic point of $\tau(X)$ lies on only one tangent line of $X$. 
    
    	This shows that the map $\PP(\cS_{|\G(X)})\to \tau(X)$ is finite of degree one.
    It follows that the projection from $I_{\tau(X)}$ to $\tau(X)$ is also finite of degree one. The claim follows from \cite[Theorem C.1]{lifting}.
    \end{proof}

    \begin{ex}[Continuation of Example \ref{ex:running} (a curve in $\PP^3$)]\label{ex:mult2}
    	We may compute the multiplicities of top-dimensional cells of $\trop(X^*)$ using Proposition \ref{prop:mult} and Theorem \ref{thm:mult}. We list all multiplicities differing from $1$ in Table \ref{table:mult}. We describe two of the computations in more detail. 
    
    	For the vertex $V_1$, we may apply Corollary \ref{cor:tamemult}. For this, we must compute the absolute value of the determinant of any $3\times 3$ submatrix of
    	\[
    		\left(\begin{array}{c c c c}
    1&0&3&-4\\
    2&0&1&-3\\
    0&1&1&-2
    		\end{array}\right).
    	\]
    	This turns out to equal $5$.
    
    	For the edge $E_3$, we first note that it has multiplicity $1$ (Example \ref{ex:mult}). Thus, the multiplicity of its contributions will come from the lattice factor in Theorem \ref{thm:mult}. The top-dimensional cells in $\trop(I_{X^*})$ coming from $E_3$ have the form
    	\[
    	(\alpha,\beta(\alpha),-\alpha+\RR_{\geq 0}e_i), \qquad \alpha\in E_3=V_2+\RR_{\geq 0}\cdot (3,2,4)\]
    	where the $jk$-th coordinate of $\beta(\alpha)$ is $\alpha_j+\alpha_k$. We must thus compute the index of the subgroup generated by $(3,2,4)$ and $e_i$
    	in the group obtained by intersecting the linear space spanned by these elements with $\ZZ^3$. This equals $1$ except for $i=1$, in which case it equals $2$.
    	The remaining computations are similar.
    
    	We may use our description of $\trop(X^*)$ together with these multiplicities to obtain the Newton polytope for $X^*$, see e.g.~\cite[Proof of Proposition 3.3.11 and Remark 3.3.12]{tropical} for details. The Newton polytope for $X^*$ is the three-dimensional convex hull in $\RR^4$ of the columns of
    
    \[{\left({\begin{array}{ccccccccccccc}
          17&1&19&0&6&2&0&0&16&1&0&16&1\\
          4&20&0&18&0&3&5&3&4&19&17&3&18\\
          4&4&6&7&19&20&20&19&1&1&4&0&0\\
          0&0&0&0&0&0&0&3&4&4&4&6&6\\
          \end{array}}\right)}\]
          and
    \[\begin{pmatrix}
          0&2&0&17&1&0&2&0&4&2\\
          16&0&1&0&5&3&0&1&0&1\\
          3&17&17&0&0&3&4&4&0&0\\
          6&6&7&8&19&19&19&20&21&22\end{pmatrix}.\]
    
    \vspace{.5cm}
    \noindent This polytope has $23$ vertices, $36$ edges, $15$ facets, and contains a total of $2698$ lattice points.  If one wishes to find the polynomial defining $X^*$, one could explicitly find sufficiently many points on $X$, and set up a linear system of equations in the $2698$ unknowns playing the role of coefficients for the monomials corresponding to the above-mentioned lattice points. 
          By contrast, a direct computation of $X^*$ via elimination theory does not appear tractable.
    
          A posteriori, we see that the degree of $X^*$ is $25$. We can also verify this using \texttt{Macaulay2} \cite{M2}:
          \begin{verbatim}
    i1 : R=frac(QQ[t])[x_0..x_3];
    i2 : I=ideal {x_0*x_2^3+x_0^2*x_2*x_3+x_3^4,
                      t^3*x_0*x_3^2+x_1^2*x_2+x_1*x_3^2};
    i3 : polar=(gens minors(2,(jacobian I))*random(QQ^6,QQ^1))_(0,0);
    i4 : degree saturate(I+polar,ideal{x_0*x_1*x_2*x_3})
    o4 = 25
          \end{verbatim}
    Note that we are saturating to remove the boundary component of $X_1\cap X_2$ as well as the singular points of $X$.
    \end{ex}
    
    \begin{table}
    	\begin{tabular}{l l l}
    Cell of $\trop(X)$ & Contribution to $\trop(X^*)$ & Multiplicity\\
    \toprule
    $V_1$& all & $5$\\
    \addlinespace
    $V_2$& all & $3$\\
    \addlinespace
    $V_3$& all & $5$\\
    \midrule
    $E_2$ & $\conv\{-V_2,-V_3+3(e_2+e_3)\}+ \RR_{\geq 0}\cdot e_2$ & $5$\\
          & $\conv\{-V_2,-V_3+3(e_2+e_3)\}+ \RR_{\geq 0}\cdot e_3$ & $4$\\
    \addlinespace
    $E_3$ & $-E_3+\RR_{\geq 0}\cdot e_1$ & $2$\\
    \addlinespace
    $E_4$ & $-E_4+\RR_{\geq 0}\cdot e_0$ & $2$\\
    \addlinespace
    $E_5$ & $-E_5+\RR_{\geq 0}\cdot e_3$ & $2$\\
    \addlinespace
    $E_6$ & $\conv \{-V_3,-V_3+3(e_2+e_3)\}-E_6+\RR_{\geq 0}\cdot (-e_2)$ & $2$\\
           & $-V_3+3(e_2+e_3)+ \RR_{\geq 0}\cdot e_3+\RR_{\geq 0}\cdot (-e_2)$ & $2$\\
    \addlinespace
    $E_7$  & $-V_3+3(e_2+e_3)+ \RR_{\geq 0}\cdot e_2+\RR_{\geq 0}\cdot (6e_2+5e_3)$ & $5$\\
           & $-V_3+3(e_2+e_3)+ \RR_{\geq 0}\cdot e_3+\RR_{\geq 0}\cdot (6e_2+5e_3)$ & $6$\\
    \bottomrule
    \end{tabular}
    \vspace{.5cm}
    \caption{Multiplicities greater than 1 for $\trop(X^*)$}\label{table:mult}
    \end{table}

    \bibliographystyle{amsalpha}
    \bibliography{paper}
    \end{document}